\newtheorem{theo}{Theorem}
\numberwithin{theo}{section}
\newtheorem{thm}[theo]{Theorem}
\newtheorem{lem}[theo]{Lemma}
\newtheorem{lemma}[theo]{Lemma}
\newtheorem{prop}[theo]{Proposition}
\newtheorem{claim}[theo]{Claim}
\newtheorem{Def}[theo]{Definition}
\newtheorem{definition}[theo]{Definition}
\newtheorem{cor}[theo]{Corollary}
\theoremstyle{remark}
\newtheorem{remark}[theo]{Remark}
\newenvironment{Remark}{
\refstepcounter{theo}
\smallbreak\noindent%
\textbf{Remark~\thetheo.}}%
{\medbreak}
\newcommand{\ds}{\displaystyle}
\newcommand{\cacher}[1]{}
\def\ZZ{\mathbb{Z}}
\def\NN{\mathbb{N}}
\def\cA{\mathcal{A}}
\def\cB{\mathcal{B}}
\def\cM{\mathcal{M}}
\def\cF{\mathcal{F}}
\def\cR{\mathcal{R}}
\def\cT{\mathcal{T}}
\def\cAL{\mathcal{AL}}
\def\cW{\mathcal{W}}
\def\cL{\mathcal{L}}
\def\cX{\mathcal{X}}
\def\bA{\mathbf{A}}
\def\bE{\mathbf{E}}
\def\bL{\mathbf{L}}
\def\bM{\mathbf{M}}
\def\bW{\mathbf{W}}
\def\bX{\mathbf{X}}
\def\bAL{\mathbf{AL}}
\def\bT{\mathbf{T}}
\def\bBA{\mathbf{BA}}
\def\bBL{\mathbf{BL}}
\def\bBM{\mathbf{BM}}
\def\bBW{\mathbf{BW}}
\def\ab{\alpha/\beta}
\def\hC{\hat{C}}
\def\Gb{G_\bullet}
\def\hGb{\widehat{G}_\bullet}
\def\bGb{\overline{G}_\bullet}
\def\tGb{\widetilde{G}_\bullet}
\def\cAb{\cA_\bullet}
\def\cBb{\cB_\bullet}
\def\bAb{\overline{\cA}_\bullet}
\def\hAb{\widehat{\cA}_\bullet}
\def\hom{\widehat \om}
\newcommand{\Id}{\textrm{Id}}
\newcommand{\al}{\alpha}
\newcommand{\be}{\beta}
\newcommand{\de}{\delta}
\newcommand{\om}{\omega}
\newcommand{\si}{\sigma}
\newcommand{\eps}{\epsilon}
\newcommand{\De}{\Delta}
\newcommand{\Th}{\Theta}
\newcommand{\ov}[1]{\overline{#1}}
\newcommand{\bal}{\ov{\al}}
\newcommand{\bPhi}{\ov{\Phi}}
\newcommand{\bPsi}{\ov{\Psi}}
\newcommand{\bTh}{\ov{\Th}}
\newcommand{\bLa}{\ov{\Lambda}}
\newcommand{\beeta}{\ov{\eta}}
\def\Gcr{G^{\times}}
\def\jp{\mathrm{jump}}
\def\Sigmacw{J_{\mathrm{cw}}}
\def\Sigmaccw{J_{\mathrm{ccw}}}
\newcommand{\fig}[3]{\begin{figure}[h!]\begin{center}\includegraphics[#1]{#2.pdf}\end{center}\caption{#3}\label{fig:#2}\end{figure}}
\newcommand{\ee}{\textbf{e}}
\newcommand{\ff}{\textbf{f}}
\newcommand{\vv}{\textbf{v}}
\newcommand{\sups}[1]{\textsuperscript{#1}}
\newcommand{\cwjump}{\textrm{cw-jump}}
\newcommand{\ccwjump}{\textrm{ccw-jump}}
\newcommand{\toward}{\textrm{in}}
\newcommand{\inweight}{\textrm{inweight}}
\newcommand{\pseudol}{\textrm{enclength}}
\begin{document}

\author[Olivier Bernardi, \'Eric Fusy, and Shizhe Liang]{Olivier Bernardi$^{*}$ \and \'{E}ric Fusy$^{\dagger}$ \and Shizhe Liang$^{+}$}
\thanks{$^{*}$Department of Mathematics, Brandeis University, Waltham MA, USA,
bernardi@brandeis.edu.\\
$^{\dagger}$LIGM/CNRS, Universit\'e Gustave Eiffel, Champs-sur-Marne, France, eric.fusy@univ-eiffel.fr.\\
$^{+}$Department of Mathematics, Brandeis University, Waltham MA, USA, Shizhe1011@brandeis.edu.\\
}

\title[Grand Schnyder Woods]{Grand Schnyder Woods}
\date{\today}

\begin{abstract}
We define a far-reaching generalization of Schnyder woods which encompasses many classical combinatorial structures on planar graphs.

\emph{Schnyder woods} are defined for planar triangulations as certain triples of spanning trees covering the triangulation and crossing each other in an orderly fashion. They are of theoretical and practical importance, as they are central to the proof that the order dimension of any planar graph is at most 3, and they are also underlying an elegant drawing algorithm.
In this article we extend the concept of Schnyder wood well beyond its original setting: for any integer $d\geq 3$ we define a ``grand-Schnyder'' structure for (embedded) planar graphs which have faces of degree at most $d$ and non-facial cycles of length at least $d$. We prove the existence of grand-Schnyder structures, provide a linear construction algorithm, describe 4 different incarnations (in terms of tuples of trees, corner labelings, weighted orientations, and marked orientations), and define a lattice for the set of grand Schnyder structures of a given planar graph. We show that the grand-Schnyder framework unifies and extends several classical constructions: Schnyder woods and Schnyder decompositions, regular edge-labelings (a.k.a. transversal structures), and Felsner woods.
\end{abstract}

\maketitle


\tableofcontents


\section{Introduction}\label{sec:intro}

In 1989, Walter Schnyder showed that planar triangulations can be endowed with remarkable combinatorial structures, which now go by the name of \emph{Schnyder woods}~\cite{Schnyder:wood1}. A Schnyder wood of a planar triangulation (drawn without crossing in the plane) is a partition of its inner edges into three trees, crossing each other in a specific manner. A Schnyder wood is represented in Figure~\ref{fig:triangulation2}, and more details about the definition are given in the caption.

\fig{width=\linewidth}{triangulation2}{(a) A Schnyder wood of a triangulation $G$, where the three trees are indicated by three colors.
A Schnyder wood of $G$ is a partition of the inner edges of $G$ into three trees $W_1$, $W_2$, $W_3$ satisfying two conditions: 
(1) 
 for all $i\in\{1,2,3\}$, the tree $W_i$ spans all the inner vertices and the outer vertex $v_{i-1}$ which is chosen as its root,  
(2) if the trees are oriented toward their roots, then in clockwise direction around each inner vertex one has: the outgoing edge of $W_1$, the ingoing edges of $W_3$, the outgoing edge of $W_2$, the ingoing edges of $W_1$, the outgoing edge of $W_3$, and finally the ingoing edges of $W_2$. 
(b) Encoding the Schnyder wood by a corner labeling.} 

Schnyder used the existence of Schnyder woods to show that the incidence poset of any planar triangulation has dimension at most 3, thereby completing the proof that graphs are planar if and only if their incidence poset has dimension at most 3~\cite{Schnyder:wood1}. Another application explored by Schnyder is the possibility of drawing planar graphs with straight edges~\cite{Schnyder:wood1} (reproving a fact established by Wagner~\cite{Wagner:straght-line-drawing}), and he showed further that this can be done with all vertices on the lattice points of a $(n+2)\times(n+2)$ grid~\cite{Schnyder:wood2}. Since then, numerous other applications have been found for Schnyder woods~\cite{louigi2017scaling,bonichon2006planar,aleardi2018array,dhandapani2010greedy,F01,Felsner:posets,gonccalves2012triangle,li2017schnyder,Poulalhon:triang-3connexe+boundary}.\\

In 1993, Xin He showed that triangulations of the square without separating 3-cycles can also be endowed with remarkable combinatorial structures~\cite{He93:reg-edge-labeling} called \emph{regular edge-labelings}. These structures were later rediscovered by the second author~\cite{Fu07b} who named them \emph{transversal structures}, and we will adopt this name throughout the article. There are several ways to encode transversal structures, and one of the encodings given in~\cite{Fu07b} is as a partition of the inner edges of the triangulation into two graphs, crossing each other in a specific manner. A transversal structure is represented in Figure~\ref{fig:transversal}. 

\fig{width=\linewidth}{transversal}{(a) A transversal structure for a triangulation of the square $G$, where the two subgraphs are indicated using colors.
A transversal structure of $G$ is a partition of the inner edges into 2 subgraphs $S_1$ and $S_2$ satisfying 2 conditions (1) $S_1$ is incident to all inner vertices as well as the outer vertices $v_1$ and $v_3$, while $S_2$ is incident to all inner vertices as well as the outer vertices $v_2$ and $v_4$, (2) in clockwise order around each inner vertex of $G$ one has: a non-empty set of edges in $S_1$, a non-empty set of edges in $S_2$, a non-empty set of edges in $S_1$, and finally a non-empty set of edges in $S_2$. 
(b) Encoding the transversal structure by a corner labeling.}

Xin He~\cite{He93:reg-edge-labeling} used transversal structures to give an algorithm for realizing these triangulations as the contact graphs of rectangles (equivalently, drawing their dual in such a way that every face is a rectangle), and together with Goos Kant showed that the transversal structure and drawing can be computed in linear time~\cite{KantHe97:reg-edge-labeling-linear}. Since then, several other graph drawing algorithms based on transversal structures have been obtained~\cite{biedl2018embedding,eppstein2012area,felsner2013rectangle,felsnerFewLines,Fu07b}.\\

In 2012~\cite{OB-EF:Schnyder}, the first and second authors gave an analogue of Schnyder woods for \emph{$d$-angulations} (a planar graph drawn in the plane such that every face has degree $d$). This analogue, named \emph{$d$-Schnyder decomposition} is a $d$-tuple of spanning trees crossing each other in a specific manner and such that every inner edge belongs to exactly $d-2$ trees. A 5-Schnyder decomposition is shown in Figure~\ref{fig:pentagulation}. It is shown in~\cite{OB-EF:Schnyder} that a $d$-angulation admits a $d$-Schnyder decomposition if and only if its girth is $d$ (equivalently, it has no cycle of length less than $d$). The definition of 3-Schnyder decomposition (for triangulations) coincides with the classical definition of Schnyder woods. In~\cite{OB-EF:Schnyder} 4-Schnyder decompositions were also used to design a drawing algorithm for planar 4-valent graphs of min-cut 4. 

\fig{width=\linewidth}{pentagulation}{(a) The five trees forming the 5-Schnyder decomposition. (b) Encoding the Schnyder decomposition by a corner labeling.}

At first sight, Schnyder woods and transversal structures may not appear to have much in common. However, we will show in this article that they can be given a common definition. One way to make the commonality more apparent, is to encode both structures by certain labelings of the corners. For Schnyder woods, this is a classical encoding, defined by Schnyder himself~\cite{Schnyder:wood1}, as a labeling of the corners of the triangulation with numbers in $\{1,2,3\}$ satisfying certain conditions. For transversal structures, we will define a similar encoding by a labeling of the corners of the triangulation with numbers in $\{1,2,3,4\}$. These corner labelings are indicated in Figures~\ref{fig:triangulation2}(b) and~\ref{fig:transversal}(b) respectively. With this ``labeling incarnation'' the conditions defining Schnyder woods and transversal structures look pretty similar.
\\

In this article we define a general combinatorial structure, the \emph{grand-Schnyder woods}, which put Schnyder woods and transversal structures under one roof.
 For $d\geq 3$, we call \emph{$d$-map} a connected planar graph drawn in the plane without edge crossing such that the outer face has degree $d$ (and is incident to $d$ distinct vertices) and the inner faces have degree at most $d$.
For a $d$-map $G$, a \emph{$d$-grand-Schnyder wood} is a $d$-tuple of spanning trees of $G$ crossing each other in a specific manner. An example is given in Figure~\ref{fig:4-grand-Schnyder}, and a precise definition is given in Section~\ref{sec:incarnations}.\\

When the $d$-map $G$ is a $d$-angulation, then the $d$-grand-Schnyder woods of $G$ coincide with the $d$-Schnyder decompositions of $G$ (up to minor differences in conventions). 
When the map $G$ is a triangulation of the square (a 4-map), then the 4-grand-Schnyder woods of $G$ are in bijection with the transversal structures of $G$. 
Hence grand-Schnyder woods are a far reaching generalization of both $d$-Schnyder decompositions and transversal structures. 
\\

One of our main results is that a $d$-map $G$ admits a $d$-grand-Schnyder wood if and only if all the non-facial cycles of $G$ have length at least $d$ (generalizing the existence results known for Schnyder decompositions and transversal structures~\cite{OB-EF:Schnyder,He93:reg-edge-labeling}). We call \emph{$d$-adapted} a $d$-map satisfying this condition.
In a forthcoming article~\cite{OB-EF-SL:4-GS-drawing}, we show that $4$-grand-Schnyder woods can be used for defining some graph-drawing algorithms. 
Schnyder decompositions and transversal structures have also been used to define bijections between classes of planar maps and classes of trees~\cite{albenque2013generic,Bernardi-Fusy:dangulations,Fu07b,FuPoScL,Poulalhon:triang-3connexe+boundary,Schaeffer:these}, and we plan to investigate whether these bijections can be extended thanks to the general framework of grand-Schnyder structures. 


As mentioned above, Schnyder woods and transversal structures have several incarnations. For instance, Schnyder woods can be encoded by either a triple of trees, or by a corner labeling. There are more incarnations, and this extends to the $d$-grand-Schnyder woods setting. More precisely, $d$-grand-Schnyder woods can be naturally encoded in four distinct ways.
\begin{compactitem}
\item As a $d$-tuple of trees crossing each other in a specific manner. We call such a structure \emph{$d$-grand-Schnyder wood}, or \emph{$d$-GS wood} for short.
\item As a labeling of the corners with values in $[d]:=\{1,2,\ldots,d\}$ satisfying certain local conditions. We call such a structure \emph{$d$-grand-Schnyder corner labeling}, or \emph{$d$-GS labeling} for short.
\item As a weighted orientation of $G$ together with marks at corners. We call such a structure \emph{$d$-grand-Schnyder marked orientation}, or \emph{$d$-GS marked orientation} for short.
\item As a weighted orientation of the \emph{angular map} of the $d$-map $G$ (the angular map of $G$ is obtained from $G$ by adding a vertex in each face of $G$ and joining that vertex to each vertex of $G$ incident to the face). We call such a structure a \emph{$d$-grand-Schnyder angular orientation}, or \emph{$d$-GS angular orientation} for short.
\end{compactitem}
These four incarnations are represented in Figure~\ref{fig:4-grand-Schnyder}.
We will define these three structures in Section~\ref{sec:incarnations}, and show that they are in bijection with each other in Section~\ref{sec:statements}. 

\fig{width=\linewidth}{4-grand-Schnyder}{Four incarnations of the same 4-grand-Schnyder structure.}

In Section~\ref{sec:dual}, we will also consider some incarnations of $d$-grand-Schnyder woods of a $d$-map $G$ as decorations of the dual graph $G^*$. This can be done either as a corner labeling of $G^*$ or as a $d$-tuple of spanning trees of $G^*$ crossing each other in a specific manner.\\

Schnyder woods and transversal structures are known to have two additional interesting properties. First, Schnyder woods and transversal structures are known to be computable in linear time~\cite{Schnyder:wood1,KantHe97:reg-edge-labeling-linear}. It was left as an open question in~\cite{OB-EF:Schnyder} to find a linear time algorithm for computing $d$-Schnyder decompositions for $d\geq 4$. We provide such an algorithm in Section~\ref{sec:proof-existence}. Precisely, for all $d\geq 3$, we give an algorithm for computing a $d$-GS wood for any $d$-map $G$ having all non-facial cycles of length at least $d$, in a number of operations which is linear in the number of vertices of $G$. 
Second, the set of Schnyder woods of any given triangulation is known to have a \emph{lattice structure} (in the sense of poset theory). This was first discovered in~\cite{Mendez:these} and reinterpreted in~\cite{Brehm:latticeSchnyder,Felsner:lattice}. Similarly, the set of transversal structures of a given triangulations of the square has a lattice structure~\cite{Fu07b}. In both cases, the covering relations in the lattice can be described in a simple local way. We generalize this lattice structure to the set of $d$-GS woods of a $d$-map and describe the covering relations in Section~\ref{sec:lattice}.\\

Schnyder woods and transversal structures are not the only combinatorial structures which can be captured by the grand-Schnyder framework. More precisely, there are additional structures which can be identified with \emph{bipartite grand-Schnyder woods}. For an even integer $d=2b$, the bipartite $d$-adapted maps admit a subclass of $d$-GS structures which we call \emph{even $d$-GS structures}. Even $d$-GS structures are a bit simpler than arbitrary $d$-GS structures, and after simplifications we arrive at the notion of \emph{$b$-bipartite grand-Schnyder structures} (or \emph{$b$-BGS} for short), which again have 4 different incarnations described in Section~\ref{sec:bipartite}. We will show that \emph{bipolar orientations} of 2-connected graphs, and \emph{Felsner woods} of 3-connected graphs~\cite{F01} can be identified with classes of bipartite grand-Schnyder structures (2-BGS and 3-BGS respectively).\\


Recall that a \emph{bipolar orientation} of a graph is an acyclic orientation with a unique source (vertex with no ingoing edge) and a unique sink (vertex with no outgoing edge). Given a planar graph $G$ drawn in the plane with 2 marked outer vertices $s,t$, one can associate a 4-angulation $Q_G$ by the process indicated in Figure~\ref{fig:2-orientations}(a), and any planar 4-angulation arise in this way. It is known that the bipolar orientations of $G$ with source $s$ and sink $t$ are in bijection with the \emph{2-orientations of $Q_G$}, that is, the orientations of the inner edges of $Q_G$ such that every inner vertex has outdegree 2. The correspondence is shown in Figure ~\ref{fig:2-orientations}(b). As we will see in Section~\ref{sec:bipartite}, $2$-orientations are one of the incarnations of $2$-BGS. This gives a bijection between the set of plane bipolar orientations and the set of $2$-BGS of quadrangulations.\\

 
\fig{width=.9\linewidth}{2-orientations}{Left: A connected planar map $M$ with two distinguished outer vertices, and the associated 4-angulation $G$. Right: a bipolar orientation of $M$ and the associated 2-orientation of $G$ (an incarnation of 2-BGS structures).}

Second, recall that there exists a generalization of Schnyder woods defined by Felsner~\cite{F01} (and independently in~\cite{di1999output}) for so-called \emph{suspended 3-connected plane graphs}.  
Let us call \emph{Felsner woods} this generalization of Schnyder woods (triangulations are a special case of suspended 3-connected plane graphs, and Felsner woods correspond to Schnyder woods in the case of triangulations). By the process indicated in Figure~\ref{fig:Felsner_woods_intro_bis}, one can associate to each suspended 3-connected plane graph $G$ a 4-angulation of the hexagon $M_G$ for which all non-facial cycles have length at least 6 (this process is almost a bijection). We will show in Section~\ref{sec:bip-classical} that the Felsner woods of $G$ 
are in bijection with the 3-BGS of $M_G$ (as we will explain, the 3-BGS of $M_G$ are closely related to some tricoloring of the edges of $M_G$ which is an incarnation of Felsner woods~\cite{F01} which was also discovered independently by Miller \cite{Miller:FelsnerWoods}). This gives a bijection between the set of Felsner woods and the set of $3$-BGS. \\

\fig{width=\linewidth}{Felsner_woods_intro_bis}{Left: a suspended 3-connected plane graph $M$ (three outer vertices are marked), and the associated 4-angulation of the hexagon $G$. Right: a Felsner corner-labeling of $M$ (an incarnation of Felsner woods) 
and the corresponding tricoloration of inner edges of $G$ (an incarnation of 3-BGS structures).}

Lastly, in Section~\ref{sec:level2-GS} we will consider an extension of grand-Schnyder woods, called \emph{quasi-Schnyder structures}, which exist for $d$-maps which are not quite $d$-adapted. 
We say that a map is \emph{quasi $d$-adapted} if it is a $d$-map such that simple cycles of length less than $d$ are either facial cycles, or cycles of length $d-1$ containing a single edge and no vertex. \\ 
We show that a $d$-map with no face of degree less than $3$ admits a quasi-Schnyder wood if and only if it is quasi $d$-adapted. 
A quasi-Schnyder wood of a (quasi 5-adapted) triangulation of a 5-gon is represented in Figure~\ref{fig:5c_wood}. 
In this figure, the quasi-Schnyder structure is represented in terms of woods, whereas the other incarnations (in terms of orientations and labelings) 
will be discussed in Section~\ref{sec:level2-GS}. In \cite{OB-EF-SL:5QS-drawing} we focus on quasi-Schnyder woods of quasi $5$-adapted triangulations, discuss additional incarnations, and use these structures to define a graph-drawing algorithm (for triangulations of the 5-gon, the quasi 5-adapted condition is closely related to 5-connectedness).\\

Before we close this introduction, let us mention two links between Schnyder woods and transversal structures which have been previously established, but are not directly related to the present article. First, Kant and He showed in~\cite[Sec.4]{KantHe97:reg-edge-labeling-linear} that 4-connected triangulations admit a special kind of Schnyder woods, and that for $T$ any 4-adapted triangulation of the 4-gon, the transversal structures of $T$ can be mapped surjectively to the special Schnyder woods of the 4-connected triangulation obtained from $T$ by adding a diagonal in the outer 4-gon. 
Second Felsner, Schrezenmaier and Steiner~\cite{felsner2018equiangular} have considered families of orientations for planar triangulations of the $d$-gon (for $d\geq 3$) that are inspired by representations of triangulations by contact-systems of equi-angular $d$-gons. The definition of these orientations depends on the parity of $d$. The family of orientations obtained for odd $d$ correspond to Schnyder woods in the case $d=3$, while the family of orientations for even $d$ correspond to transversal structures for $d=4$. These orientations are not closely related to grand-Schnyder woods for higher values of $d$ however, and they are defined on triangulations of the $d$-gon without girth constraint for $d\geq 5$.


\fig{width=\linewidth}{5c_wood}{(a) A 5-quasi-Schnyder wood of a (quasi 5-adapted) triangulation of a 5-gon. (b) Encoding the quasi-Schnyder structure by a corner labeling.}

This article is organized as follow. In Section~\ref{sec:notation} we set some notation and define the relevant classes of plane maps. 
In Section~\ref{sec:incarnations} we give four distinct incarnations of $d$-GS structures, as woods, corner labelings, marked orientations or angular orientations.
In Section~\ref{sec:statements} we state our main results: the existence result for $d$-GS structures, and the fact that the set of $d$-GS woods, labelings, marked orientations and angular orientations are in bijection.
We delay some of the proofs about these bijections to Section~\ref{sec:remaining-proofs}, and the proof of existence of $d$-GS structure to Section~\ref{sec:proof-existence} (which also contains the proof that these structures can be computed in linear time). 
In Section~\ref{sec:arc_labeling}, we provide a further incarnation of $d$-GS structure as arc-labelings for a restricted class of $d$-maps called \emph{edge-tight}. This further incarnation makes the connection to transversal structures and Felsner woods more straightforward and yield a decomposition of the map into plane bipolar orientations.
In Section~\ref{sec:classical}, we explain in detail how $d$-GS structures specialize to the classical Schnyder woods and to transversal structures.
In Section~\ref{sec:bipartite}, we define bipartite $d$-GS structures and how these structures are related to 2-orientations and Felsner woods.
In Section~\ref{sec:lattice}, we study the lattice structure for the set of $d$-GS structures of a given $d$-map.
In Section~\ref{sec:dual}, we consider 
the dual $d$-GS structures (either as dual-corner labelings or as dual-woods), on the dual $G^*$ of a $d$-adapted map.
In Section~\ref{sec:level2-GS}, we discuss quasi-Schnyder structures.
We conclude in Section~\ref{sec:conclusion} with some perspectives and open questions.


\section{Notation and background}\label{sec:notation}
%
For a positive integer $d$, we define $[d]:=\{1,2,\ldots,d\}$. 
The set of non-negative integers is denoted by $\NN:=\{0,1,2,\ldots\}$.
The cardinality of a set $S$ is denoted by $|S|$.

Our \emph{graphs} are finite and undirected. We allow multiple edges and loops. A \emph{simple graph} is a graph without multiple edges or loops.

A \emph{plane map} is a connected planar graph drawn in the plane without edge crossing, considered up to continuous deformation. 
The \emph{faces} of a plane map are the connected components of the complement of the graph. The infinite face is called \emph{outer face}, and the finite faces are called \emph{inner faces}. The vertices and edges incident to the outer face are called \emph{outer} while the other are called \emph{inner}. 
The numbers $\vv$, $\ee$ and $\ff$ of vertices, edges and faces of a plane map are related by the \emph{Euler relation}  $\vv+\ff=\ee+2$.

We now define the class of plane maps which will be relevant for this article.
\begin{definition}\label{def:d-adapted}
A \emph{$d$-map} is a plane map such that the inner faces have degree at most $d$, and the outer face has degree $d$ and is incident to $d$ distinct vertices (in other words, the contour of the outer face is a simple cycle). 
We will assume that the outer vertices of a $d$-map are labeled $v_1,v_2,\ldots, v_d$ in clockwise order along the boundary of the outer face. 
A \emph{$d$-adapted map} is a $d$-map such that any simple cycle which is not the contour of a face has length at least $d$.\\
\end{definition}
We point out that $d$-adapted maps are necessarily 2-connected (because a cut point in a $d$-map $G$ implies the existence of a simple cycle of length strictly less than the degree of an inner face of $G$, which shows that $G$ is not $d$-adapted).

In a plane map, a \emph{corner} is the sector delimited by two consecutive (half-)edges around a vertex. It is called an \emph{inner corner} if it lies in an inner face, and an \emph{outer corner} otherwise.
The \emph{degree} of a vertex or face is its number of incident corners. A  \emph{$d$-angulation} is a plane map with all faces of degree $d$. A \emph{$d$-angulation of the $k$-gon} is a plane map with inner faces of degree $d$, and outer face of degree $k$. 
A graph is \emph{bipartite} if it admits a bicoloring of its vertices such that adjacent vertices have different colors. It is known that a plane map is bipartite if and only if all its faces have even degree. For $k\geq 2$, a graph is called \emph{$k$-connected} if it is connected and the deletion of any subset of $(k-1)$ vertices does not disconnect it (loops are forbidden for $k\geq 2$, multiple edges are forbidden for $k\geq 3$).

Let $G$ be an undirected graph. An \emph{arc} of $G$ is an edge $e$ of $G$ together with a chosen orientation of $e$ (so each edge of $G$ correspond to two arcs). The arc \emph{opposite} to an arc $a$, denoted by $-a$, is the arc corresponding to the same edge as $a$ but with the opposite direction. 
The endpoints of an arc $a$ are called the \emph{initial} and \emph{terminal} vertices of $a$ (with $a$ oriented from the initial vertex to the terminal vertex).  If $v$ is the initial (resp. terminal) vertex of the arc $a$, then we say that $a$ is an \emph{outgoing arc} (resp. \emph{ingoing arc}) at $v$. 
\\

A \emph{path} in an undirected graph $G$ is a sequence of arcs $a_1,a_2,\ldots,a_k$ such that the terminal vertex of $a_i$ is the initial vertex of $a_{i+1}$ for all $i\in[k-1]$. It is called a \emph{closed path} if the terminal vertex of $a_k$ is the initial vertex of $a_1$. A \emph{cycle} is a (cyclically ordered) closed path. A path or cycle is called \emph{simple} if it does not pass twice by the same vertex. The \emph{girth} of a graph is the minimum length of its simple cycles.   In a plane map, a closed path formed by the arcs around a face is called \emph{contour} of that face. It is known that face contours are simple cycles if the plane map is 2-connected. 
A simple cycle on a plane map is called \emph{counterclockwise} (resp. \emph{clockwise}) if the direction of arcs is counterclockwise (resp. clockwise) around the cycle.

Let $G$ be a graph.  Given an orientation of $G$, a \emph{directed path} (resp. \emph{directed cycle}) is a path (resp. cycle) $a_1,a_2,\ldots,a_k$ such that every arc $a_i$ is oriented according to the orientation of $G$.
A \emph{weighted orientation} of $G$ is an assignment of a non-negative integer to each arc of $G$. Given a weighted orientation $\cW$ of $G$, we call \emph{weight} of an edge the sum of the weights of the two corresponding arcs. 
Weighted orientations are a generalization of the classical (unweighted) orientations of $G$. Indeed the ``unweighted'' orientations of $G$ can be identified to the weighted orientations of $G$ such that the weight of every edge is 1 (for each edge, the arc of weight 1 is taken as the orientation of the edge). The \emph{outgoing weight} (shortly, the \emph{weight}) of a vertex $v$ is the sum of the weights of the arcs going out of $v$. Given a weighted orientation, we call \emph{positive path} (resp. \emph{positive cycle}) a path (resp. cycle) $a_1,a_2,\ldots,a_k$ such that the weight of every arc is positive (this generalizes the notion of \emph{directed path} and \emph{directed cycle}).

A \emph{tree} is a connected, acyclic graph. For a tree $T$ with a vertex $v$ distinguished as its \emph{root}, we apply the usual ``genealogy'' vocabulary about trees, where $v$ is an \emph{ancestor} of all the other vertices, and every non-root vertex incident to $T$ has a \emph{parent} in $T$, etc. 
We say that we \emph{orient the tree $T$ toward its root} by orienting every edge from child to parent. With this orientation, every non-root vertex of $T$ is incident to one outgoing edge in $T$ (the edge leading to its parent).
A \emph{subtree} of a graph $G$ is a subset of edges of $G$ such that this set of edges together with the incident vertices forms a tree. A \emph{spanning tree} of $G$ is a subtree of $G$ incident to every vertex of $G$.


\section{Incarnations of grand-Schnyder structures}\label{sec:incarnations}

In this section we define the $d$-grand-Schnyder structures under their four possible incarnations: \emph{GS woods}, \emph{GS labelings}, \emph{GS marked orientations}, and \emph{GS angular orientations}. The definition of those structures is summarized in Figure~\ref{fig:def-incarnations}.
As we will state in Section \ref{sec:statements}, these structures exist for a $d$-map if and only if this $d$-map is $d$-adapted. 

\subsection{Grand-Schnyder corner labelings}\label{subsec:GS-labeling}
In this subsection, we define the \emph{grand-Schnyder corner labelings} for $d$-maps.   
A \emph{$d$-labeling} of a $d$-map $G$ is an assignment of a \emph{label} in $[d]:=\{1,\ldots,d\}$ to each inner corner of $G$.
A $d$-grand-Schnyder corner labelings of $G$ is a $d$-labeling satisfying certain local conditions represented in Figure~\ref{fig:def-incarnations} (top row).
These conditions are best expressed in terms of \emph{jumps}.

Consider a $d$-labeling of a $d$-map $G$. Let $c$ and $c'$ be two inner corners of $G$, and let $i$ and $i'$ be their respective labels. The \emph{label jump} from the corner $c$ to the corner $c'$ is defined as the integer $\delta$ in $\{0,1,\ldots,d-1\}$ such that $i+\delta=i'\mod d$ (in other words, the label jump $\delta$ is $i'-i$ if $i'-i\geq 0$, and $i'-i+d$ otherwise).
For an inner vertex $v$ of $G$, the \emph{sum of clockwise jumps} around~$v$ is the sum of label jumps between consecutive corners in clockwise order around~$v$. Similarly, the \emph{sum of clockwise jumps} around a face $f$ is the sum of label jumps between consecutive corners in clockwise order around~$f$. Note that the sum of clockwise jumps around a vertex or face is necessarily a multiple of $d$: it is equal to $k\, d$, where $k$ is the number of strict decreases of labels in clockwise order around the vertex or face (a strict decrease is when the label of a corner is strictly larger than the label of the following corner).

\begin{definition}\label{def:GS-labeling}
Let $d\geq 3$, and let $G$ be a $d$-map. As usual, we assume that the outer vertices of $G$ are denoted by $v_1,\ldots,v_d$ in clockwise order around the outer face.
A \emph{$d$-grand-Schnyder corner labeling} of $G$ is an assignment to each inner corner of $G$ of a label in $[d]$ satisfying the following conditions.
\begin{itemize}
\item[(L0)] For all $i\in [d]$, all the corners incident to $v_i$ have label $i$.
\item[(L1)] For every inner vertex or face of $G$, the sum of clockwise jumps around this vertex or face is $d$.
\item[(L2)] The label jumps from a corner to the next corner around a face are always positive (equivalently, consecutive corners around a face have distinct labels).
\item[(L3)] Let $e$ be an inner edge and let $f$ be an incident face. Let $c$ and $c'$ be the consecutive corners incident to $e$ in clockwise order around~$f$, and let $v$ be the vertex incident to $c'$. The label jump $\delta$ from $c$ to $c'$ and the label jump $\eps$ from $c'$ to the next corner in clockwise order around~$v$ satisfy $\delta+\eps> d-\deg(f)$.
\end{itemize}
\end{definition}
The definition of  $d$-grand-Schnyder corner labelings, or \emph{$d$-GS labelings} for short, is represented in the top row of Figure~\ref{fig:def-incarnations}.

\fig{width=\linewidth}{def-incarnations}{Conditions defining the $d$-grand-Schnyder structures. The orientation weights are indicated by arrowheads (the number of arrowhead indicates the weight), while the markings are indicated by arcs in the corners (the number of arcs indicates the number of marks).}

Let us make a few easy observations about Definition~\ref{def:GS-labeling}.
\begin{remark}
\begin{compactitem}
\item Condition (L1) could alternatively be stated as saying that there is exactly one decrease of labels in clockwise order around any inner vertex or face. 
\item Conditions (L1) and (L2) imply that, clockwise around an inner face $f$, the label jumps from a corner to the next are all between $1$ and $1+d-\deg(f)$. These conditions also imply that the labels around any inner face are all distinct.
\end{compactitem}
\end{remark}

Let us prove one more easy property of $d$-GS labelings.
\begin{lemma}\label{lem:ccw-jumps-edges}
Let $\cL$ be a $d$-GS labeling of a $d$-map $G$. For an inner edge $e$, we consider the 4 corners incident to $e$ and the 4 label jumps in counterclockwise order around~$e$ as represented in Figure~\ref{fig:edge-jumps}. The sum of label jumps counterclockwise around $e$ is equal to~$d$.
\end{lemma}

\fig{width=.9\linewidth}{edge-jumps}{(a) The label jumps in counterclockwise order around an inner edge~$e$. By Lemma~\ref{lem:ccw-jumps-edges} these label jumps always add up to $d$ in a $d$-GS labeling. (b) The Condition (W2') replacing Conditions (W2) and (W3) for $d$-GS woods of $d$-adapted maps.}

Lemma \ref{lem:ccw-jumps-edges} is illustrated in Figure \ref{fig:edge-jumps}(a). Note that by Lemma \ref{lem:ccw-jumps-edges}, Condition (L3) for $d$-GS labelings can be completed to say that the label jumps $\delta, \eps$ must satisfy $d-\deg(f)< \delta+\eps\leq d-1$.

\begin{proof} Let $\cwjump(x)$ (resp. $\ccwjump(e)$) be the sum of label jumps in clockwise order around an inner vertex or inner face $x$ (resp. in counterclockwise order around an inner edge $e$). We observe that a clockwise jump around a vertex or a face is a counterclockwise jump around an inner edge, or a jump along one of the outer edges (there are $d$ such jumps, and each has value $1$). This gives
 \begin{equation}\label{eq:sum-jumps-relation}
\sum_{f\in F}\cwjump(f)+ \sum_{v\in V}\cwjump(v)=d+\sum_{e\in E}\ccwjump(e),
\end{equation}
where $V,F,E$ are the sets of inner vertices, inner faces, and inner edges respectively. By Condition (L1), the left-hand side is $d(|V|+|F|)$, which is equal to $d+d|E|$ by the Euler relation. By Condition (L2), we have $\ccwjump(e)\geq d$ for all $e\in E$ (since $\ccwjump(e)$ is a positive multiple of $d$), which by the preceding implies $\ccwjump(e)= d$ for all $e\in E$.
\end{proof}

\subsection{Grand-Schnyder woods}\label{subsec:GS-woods}
In this subsection we define grand-Schnyder woods for $d$-maps. These are $d$-tuples of oriented spanning trees satisfying certain conditions represented in Figure~\ref{fig:def-incarnations} (second row). The definition below does not actually specify that the woods are trees; this property is actually a consequence of the other conditions, as stated in Proposition \ref{prop:Wi-are-trees}.


\begin{definition}\label{def:woods}
Let $d\geq 3$, and let $G$ be a $d$-map. As usual, we assume that the outer vertices of $G$ are denoted by $v_1,\ldots,v_d$ in clockwise order around the outer face.
A \emph{$d$-grand-Schnyder wood} of $G$ is a $d$-tuple $W=(W_1,\ldots,W_d)$ of subsets of arcs of $G$ satisfying the following conditions:
\begin{itemize}
\item[(W0)] 
For all $i\in [d]$, every vertex $v\neq v_i$ has exactly one outgoing arc in $W_i$, while $v_i$ has no outgoing arc in $W_i$.
For all $k\neq i$, the arc in $W_i$ going out of the outer vertex $v_k$ is the  outer arc oriented from $v_k$ to $v_{k+1}$.
Lastly, $W_i$ does not contain any inner arc oriented toward $v_{i}$ or $v_{i+1}$. 
\item[(W1)] Let $v$ be an inner vertex with incident outgoing arcs $a_1,a_2,\ldots,a_d$ in the sets $W_1,W_2,\ldots,W_d$ respectively. The arcs  $a_1,a_2,\ldots,a_d$ are not all equal, and they appear in clockwise order around~$v$ (with the situation $a_i=a_{i+1}$ allowed).
\item[(W2)] Let $v$ be an inner vertex with incident outgoing arcs $a_1,a_2,\ldots,a_d$ in the sets $W_1,W_2,\ldots,W_d$ respectively. Let $a$ be an arc oriented toward $v$. 
If the arc $a$ belongs to the set $W_i$, then $a$ appears strictly between $a_{i+1}$ and $a_{i-1}$ in clockwise order around~$v$ (by ``strictly'', we mean that $a$ is not on the same edges as $a_{i-1}$ or $a_{i+1}$).\footnote{There is a possible ambiguity in Definition~\ref{def:woods} that we ought to clarify. Let $(W_1,\ldots,W_d)$ be a $d$-GS wood of $G$. Let $v$ be an inner vertex with incident outgoing arcs $a_1,a_2,\ldots,a_d$ in the sets $W_1,W_2,\ldots,W_d$ respectively (recall that they are not all equal). If $a_i=a_{i+1}=a_{i+2}=\cdots=a_j$, then we consider that there is no edge incident to $v$ appearing strictly between $a_i$ and $a_j$, but there are edges appearing strictly between $a_j$ and $a_i$  (every edge incident to $v$ except the one containing $a_i=a_j$).}
\item[(W3)] Let $v$ be a vertex with outgoing arcs $a_1,a_2,\ldots,a_d$ in the sets $W_1,W_2,\ldots,W_d$ respectively (if $v=v_k$ is the root of $W_{k}$ we adopt the convention $a_{k}:=(v_{k},v_{k-1})$). Let $a$ be an inner arc oriented toward $v$, let $f$ be the face at the right of $a$, and let $\eps$ be the number of sets $W_1,W_2,\ldots,W_d$ containing the opposite arc $-a$. 
If $d-\deg(f)-\eps\geq 0$ and the arc $a$ belongs to the set $W_i$, then $a$ appears strictly between $a_{i+1+d-\deg(f)-\eps}$ and $a_{i}$ in clockwise order around~$v$. The same holds if $d-\deg(f)-\eps\geq 0$ and the arc $a$ belongs to none of the sets $W_1,W_2,\ldots,W_d$ but appears between the outgoing arcs in $W_i$ and $W_{i+1}$ in clockwise order around the initial vertex of~$a$.
\end{itemize}
\end{definition}

The definition of $d$-grand-Schnyder woods, or \emph{$d$-GS woods} for short, is illustrated in the second row of Figure~\ref{fig:def-incarnations}. 
 
We now prove a few additional facts that can be deduced from Conditions (W0-W3). 
Conditions (W0-W3) are all ``local'' (specifying what happens around vertices), but they actually imply the ``global property'' that the sets $W_1,\ldots,W_d$ are spanning trees as stated below (and as reflected in the figure representing Condition (W0)).
\begin{prop}\label{prop:Wi-are-trees}
Let $G$ be a $d$-map. If $\cW=(W_1,\ldots,W_d)$ is a $d$-GS wood (or even if these sets of arcs only satisfy Conditions (W0-W2)), then for all $i\in[d]$, $W_i$ is a spanning tree of $G$ oriented toward its root $v_i$.
\end{prop}

\begin{proof} Suppose that $\cW$ satisfies Conditions  (W0-W2). 
For an inner vertex $v$ and an index $i\in[d]$, let $P_i(v)$ be the directed path starting at $v$ which is obtained by following the outgoing arcs in $W_i$ until reaching an outer vertex or an inner vertex already visited. We want to show that $P_i(v)$ reaches an outer vertex.
Suppose for contradiction that there exists an inner vertex $v$ and a color $i\in [d]$, such that $P_i(v)$ ends at an inner vertex, so that $P_i(v)$ contains a cycle $C_i(v)$. We pick $v$ and $i$ such that the number of faces contained in the cycle $C_i(v)$ is minimal. Suppose, for concreteness, that $C_i(v)$ is directed clockwise. In this case, because of Condition~(W2), for every vertex $u$ on $C_i(v)$, the outgoing edge in $W_{i+1}$ at $u$ is either the same as the outgoing arc in $W_i$ or goes strictly inside $C_i(v)$. This implies that the path $P_{i+1}(u)$ cannot reach vertices laying outside of the region enclosed by $C_i(v)$. By the minimality condition on $C_i(v)$, we conclude that $C_{i+1}(u)$ is equal to $C_i(v)$ for any vertex $u$ on $C_i(v)$. Repeating the argument, we get $C_j(u)=C_i(v)$ for all $j\in [d]$. This means that the outgoing arcs in $W_1,\ldots, W_d$ at $u$ are all equal, which contradicts Condition (W1).
Similarly, if one assumes that $C_i(v)$ is directed counterclockwise, then one can prove that $C_{i-1}(u)$ is equal to $C_i(v)$ for any vertex $u$ on $C_i(v)$, and this leads to a contradiction as before. This concludes the proof that for all $v$, the path $P_i(v)$ reaches an outer vertex. Given Condition (W0), we see that there is a directed path in $W_i$ from any vertex to $v_i$, hence $W_i$ is a spanning tree of $G$ rooted at $v_i$.
\end{proof}

\begin{remark} Let $W=(W_1,\ldots,W_d)$ be a $d$-GS wood for a $d$-map $G$. By definition, the vertices $v_i$ and $v_{i+1}$ have degree 1 in the spanning tree $W_i$. Hence, removing from $W_i$ the outer edges, one gets a forest $W_i'$ made of $d-2$ subtrees rooted at the outer vertices $v_j,~j\neq i,i+1$, and spanning all the inner vertices. In the classical case $d=3$ of Schnyder, $W_i'$ is a subtree rooted at $v_{i-1}$ and spanning all the inner vertices as represented in Figure \ref{fig:triangulation2}. Note also that Condition~(W3) adds additional constraints about the incidence of $W_i$ with the outer vertices. For instance, if the inner faces of $G$ have degree at most $d-k$, then $v_j$ is not incident to any edge of $W_i'$ for all $j$ in $\{i,i+1,\ldots, i+k+1\}$. Hence in this case the forest $W_i'$ consists of $d-2-k$ subtrees rooted at the outer vertices $v_j,~j\notin \{i,i+1,\ldots, i+k+1\}$ (the other outer vertices are isolated).
\end{remark}


Lastly, let us mention that Condition (W3) can be simplified if we suppose that the $d$-map $G$ is $d$-adapted. More precisely, for a $d$-adapted map $G$, the last sentence in Condition (W3) (about arcs not belonging to any of the oriented trees $W_1,\ldots W_d$) can be removed (because it becomes redundant with the other conditions). This will be shown in Section~\ref{sec:statements}, where we will prove the following statement.
\begin{lemma}\label{lem:W2'}
For a $d$-adapted map $G$, Conditions (W2) and (W3) can be replaced by the following single condition:
\begin{itemize}
\item[(W2')] Let $v$ be a vertex with outgoing arcs $a_1,a_2,\ldots,a_d$ in the trees $W_1,W_2,\ldots,W_d$ respectively (if $v=v_k$ is the root of $W_{k}$ we adopt the convention $a_{k}:=(v_{k},v_{k-1})$). Let $a$ be an inner arc oriented toward $v$. 
If the arc $a$ belongs to the oriented tree $W_i$, then $a$ appears strictly between $a_{i+1+m}$ and $a_{i-1}$ in clockwise order around~$v$, where $m=\max(0,d-\deg(f)-\eps)$, $f$ is the face at the right of $a$, and $\eps$ is the number of oriented trees $W_1,W_2,\ldots,W_d$ containing the opposite arc $-a$. 
\end{itemize}
\end{lemma}

It is convenient to think of a $d$-GS wood of $G$ as a \emph{coloring} of the arcs of $G$ with some subsets of colors in $[d]$. We say that an arc $a$ of $G$ has a \emph{color} $i\in[d]$ if this arc belong to the set $W_i$. Note that a given arc can have several colors but that these colors are necessarily consecutive (modulo $d$), because of Condition (W1). Note also that, by condition (W2), if an arc has color~$i$, then the opposite arc $-a$ does not have color $i-1$, $i$ or $i+1$. Hence the intervals of colors of the arcs $a$ and $-a$ are disjoint and non-consecutive. In Section \ref{sec:remaining-proofs}, we will establish the following bounds for the number of colors of an inner edge.
\begin{lemma} \label{lem:nb-color-arcs}
Let $G$ be a $d$-map. For any $d$-GS wood of $G$, the total numbers of colors $n_e$ of an inner edge $e$ (that is, the sum of the numbers of colors of the two arcs corresponding to $e$) satisfies
$$\deg(f)+\deg(f')-d-2~\leq ~n_e~\leq~ d-2,$$ 
where $f,f'$ are the faces incident to $e$. In particular, if $G$ is a $d$-angulation, then every inner edge has $d-2$ colors. 
\end{lemma}





We now explain how a $d$-GS wood structure makes it possible to define paths and regions associated to each vertex.
Let $G$ be a $d$-map and let $\cW=(W_1,\ldots,W_d)$ be a $d$-GS wood. Let~$v$ be an inner vertex. For all $i$ in $[d]$, let $r_i(v)$ be the first outer vertex on the directed path from~$v$ to the root $v_{i}$ in the oriented spanning tree $W_i$. We define $P_i(v)$ as the directed path of $W_i$ from $v$ to $r_i(v)$, and call it the \emph{path of color $i$ starting at $v$}. 

The paths $P_1(v),P_2(v),\ldots,P_d(v)$ can have vertices and edges in common but they cannot ``cross'' each other, as we now explain. Let $P,Q$ be two simple directed paths on $G$. We say that \emph{$P$ crosses $Q$ from left to right} if $P$ has a sequence of successive arcs $a_0,a_1,\ldots a_k$, for some $k\geq 1$, such that
\begin{compactitem}
\item the arcs $a_1,\ldots,a_{k-1}$ are all on $Q$ (either in the direction of $Q$ or in the opposite direction),
\item the terminal vertex $u$ of $a_0$ is on $Q$ but is not an extremity of $Q$, and $a_0$ is strictly on the left of $Q$ around~$u$,
\item the initial vertex $u'$ of $a_k$ is on $Q$ but is not an extremity of $Q$, and $a_k$ is strictly on the right of $Q$ around~$u'$.
\end{compactitem}
A path $P$ crossing a path $Q$ from left to right is represented in Figure~\ref{fig:crossing-pathsa}. We define a crossing from right to left symmetrically. Note that if $P$ crosses $Q$ from left to right, then $Q$ crosses $P$ from right to left. 
We say that the paths $P$ and $Q$ are \emph{non-crossing} if $P$ does not cross $Q$ from left to right nor from right to left. 
A basic observation is that, by Condition (W2) of $\cW$, the path $P_{i}(v)$ cannot cross $P_{i+1}(v)$ from left to right, and $P_{i}(v)$ cannot cross $P_{i-1}(v)$ from right to left. We will prove that more is true.
 
\fig{width=.25\linewidth}{crossing-pathsa}{A path $P$ crossing a path $Q$ from left to right.} 

\begin{lemma} \label{lem:beam-of-paths}
Let $G$ be a $d$-map and let $\cW=(W_1,\ldots,W_d)$ be a $d$-GS wood (or even if these sets of arcs only satisfy Conditions (W0-W2)). Let $v$ be an inner vertex and let $P_1(v),P_2(v),\ldots,P_d(v)$ be the paths of color $1,2,\ldots,d$ starting at $v$. The paths $P_1(v),P_2(v),\ldots,P_d(v)$ are pairwise non-crossing and their endpoints $r_1(v),r_2(v),\ldots,r_d(v)$ appear in clockwise order (weakly) around the outer face of $G$.
\end{lemma}

\fig{width=.8\linewidth}{beam-of-paths}{(a) The paths $P_1(v),P_2(v),\ldots,P_d(v)$ are non-crossing. (b) The region $R_i(v)$.}
Lemma~\ref{lem:beam-of-paths} is represented in Figure~\ref{fig:beam-of-paths}(a). We postpone the proof to Section \ref{sec:remaining-proofs}.
Since the paths $P_1(v),\ldots,P_d(v)$ starting at a vertex $v$ are non-crossing, one can define the region $R_1(v),\ldots,R_d(v)$ that they delimit. Precisely, we define $R_i(v)$ as the submap of $G$ lying between $P_{i-1}(v)$ and $P_i(v)$ as represented in Figure~\ref{fig:beam-of-paths}(b): $R_i(v)$ is the set of vertices, faces and edges enclosed by the cycle made of $P_{i-1}(v)$, $P_i(v)$ and the set of outer edges between $r_{i-1}(v)$ and $r_i(v)$ in clockwise direction around the outer face (the vertices and edges on this cycle are part of $R_i(v)$). The regions $R_1(v),\ldots,R_d(v)$ play an important part in the applications of Schnyder woods in the classical case $d=3$ \cite{Schnyder:wood1,Schnyder:wood2}.

\subsection{Grand-Schnyder marked orientations}\label{subsec:GS-marked}
In this subsection, we define grand-Schnyder marked orientations for $d$-maps. 
Recall from Section~\ref{sec:notation}, that a \emph{weighted orientation} of a graph is an assignment of a non-negative integer to each arc of this graph called its \emph{weight}. A \emph{corner marking} of a plane map $G$ is the assignment of a non-negative number to each inner corner of $G$; this number is interpreted as the ``number of marks" of the corner. 
A $d$-GS marked orientation is a weighted orientation together with a corner marking satisfying certain conditions represented in Figure~\ref{fig:def-incarnations} (third row).

\begin{definition}\label{def:marked}
Let $d\geq 3$, and let $G$ be a $d$-map.
A \emph{$d$-grand-Schnyder marked orientation} of $G$ is a weighted orientation of $G$, together with a corner marking satisfying the following conditions.
\begin{itemize}
\item[(M0)] The weight of every outer arc is 0. For any inner arc $a$ whose initial vertex is an outer vertex $v_i$, the weight of $a$ and the number of marks in the corner of $v_i$ on the left of $a$ are both equal to $d-\deg(f)$, where $f$ is the face on the left of $a$. 
\item[(M1)] For any inner face $f$, the total number of marks in the corners of $f$ is $d-\deg(f)$.
\item[(M2)] The weight of every inner edge is $d-2$, and the outgoing weight of every inner vertex $v$ is $d+m$,  where $m$ is the number of marks in the corners incident to~$v$. 
\item[(M3)] The weight of every inner arc $a$ is at least $d-\deg(f)$, where $f$ is the face on the left of~$a$.
\end{itemize}
\end{definition}
The definition of $d$-grand-Schnyder marked orientations, or \emph{$d$-GS marked orientations} for short, is illustrated in the third row of Figure~\ref{fig:def-incarnations}.


\subsection{Grand-Schnyder angular orientations}\label{subsec:GS-angular}
In this subsection, we define grand-Schnyder angular orientations of $d$-maps. 
The \emph{angular map} of a plane map $G$ is the plane map $G^+$ obtained from $G$ by placing a new vertex $v_f$ in each inner face $f$ of $G$, and joining $v_f$ to all the vertices of $G$ incident to $f$ (more precisely, one edge from $v_f$ to each corner of $f$). The angular map $G^+$ has two types of vertices and edges: the \emph{original vertices and edges} of $G$, and the new vertices and edges that we call \emph{star vertices and edges}. The angular map is represented in Figure \ref{fig:angular-map}.

\fig{width=\linewidth}{angular-map}{(a) A 5-map $G$. (b) The angular map $G^+$. (c) The corner graph $C_G$.}

\begin{definition}\label{def:angular}
Let $d\geq 3$, and let $G$ be a $d$-map.
A \emph{$d$-grand-Schnyder angular orientation} of $G$ is a weighted orientation of the angular map $G^+$ satisfying the following conditions.
\begin{itemize}
\item[(A0)] The weight of every outer arc is 0. Any inner arc $a$ of $G^+$ whose initial vertex is an outer vertex $v_i$ has weight $0$, unless $a$ is the arc following the outer edge $(v_i,v_{i-1})$ around~$v_i$ (for this arc there is no condition).
\item[(A1)] The outgoing weight of any star vertex $v_f$ is $d-\deg(f)$, and the weight of every star edge incident to $v_f$ is $d-\deg(f)$.
\item[(A2)] The outgoing weight of every inner original vertex is $d$. The weight of any original inner edge $e$ is $\deg(f)+\deg(f')-d-2$, where $f,f'$ are the faces of $G$ incident to~$e$. 
\end{itemize}
\end{definition}
The definition of $d$-grand-Schnyder angular orientations, or \emph{$d$-GS angular orientations} for short, is illustrated in the bottom row of Figure~\ref{fig:def-incarnations}. 

\begin{remark}\label{rk:frozen}
In the representation of Condition (A0) the weight of one of the star edges out of $v_i$ is indicated as $x$. Although this weight is not explicitly specified by Condition (A0), this weight is actually $d-\deg(f)$, where $f$ is the face of $G$ containing this star edge. Indeed, this is implied by combining Condition (A1) at $v_f$ with Condition (A0) for the other outer vertices incident to $f$.
\end{remark}






\section{Main results}\label{sec:statements}
%

In this section we state our main result, which is the existence condition for $d$-GS structures. We then describe the bijections between the different incarnations of $d$-GS structures.

\begin{thm}\label{thm:main}
Let $d\geq 3$ and let $G$ be a $d$-map. There exists a $d$-GS wood (resp. labeling, marked orientation, angular orientation) for $G$ if and only if $G$ is $d$-adapted (that is, if its simple non-facial cycles have length at least $d$). 

Moreover for any fixed $d$, there is an algorithm which takes as input a $d$-adapted map, and computes a $d$-GS wood (resp. labeling, marked orientation, angular orientation) in linear time in the number of vertices.

Lastly, the set $\bW_G$ of $d$-GS woods of $G$, the set $\bL_G$ of $d$-GS labelings of $G$, the set $\bM_G$ of $d$-GS marked orientations of $G$, and the set $\bA_G$ of $d$-GS angular orientations of $G$ are all in bijection.
\end{thm}


We will prove the existence result stated in Theorem~\ref{thm:main} in Section~\ref{sec:proof-existence}, and we will also describe there the algorithm for computing $d$-GS structures. 
\\

In Section~\ref{sec:lattice} we will show that the set of $d$-GS structure of a given $d$-adapted map can be given a \emph{lattice structure} (in the sense of partially ordered sets), and characterize the covering operations. 

For the rest of this section, we focus on defining the bijections between the different incarnations of $d$-GS structures.
From now on, we fix an integer $d\geq 3$ and a $d$-map $G$. The outer vertices of $G$ are denoted by $v_1,\ldots,v_d$, and they appear in clockwise order around the outer face of $G$. The bijections between the sets $\bW_G$, $\bL_G$, $\bM_G$ and $\bA_G$ are represented in Figure~\ref{fig:bij-labeling-marked},~\ref{fig:bij-marked-angular} and~\ref{fig:bij-labeling-wood}. We start with the bijections between $\bL_G$, $\bM_G$ and $\bA_G$, which are easier to prove.


First, we define the bijection $\Phi$ between $d$-GS labelings and $d$-GS marked orientations. Roughly speaking, the bijection $\Phi$ is as follows: the marks encode the label jumps around a face, while the arc weights encode the label jumps around vertices. The bijection $\Phi$ is represented in Figure~\ref{fig:bij-labeling-marked}. We call \emph{marked orientation} of $G$ a weighted orientation of $G$ together with assigning a number of marks to each of its inner corners. 

\begin{definition}
Given a $d$-GS labeling $\cL$ of $G$, we define a marked orientation $\Phi(\cL)$ of $G$ as follows. 
For an inner corner $c$, we denote by $c^-$ the corner preceding $c$ in clockwise order around the face containing $c$, and by $c^+$ the corner following $c$ in clockwise order around the vertex incident to $c$.
\begin{compactitem}
\item The number of marks of an inner corner $c$ in $\Phi(\cL)$ is $\delta-1$, where $\delta$ is the label jump from $c^-$ to $c$.
\item The weight of outer arcs in $\Phi(\cL)$ is 0. 
Let $a$ be an inner arc, and let $c$ be the corner preceding $a$ in clockwise order around the initial vertex of $a$. Then the weight of the inner arc $a$ in $\Phi(\cL)$ is $\delta +\eps-1$, where $\delta$ is label jump from $c^-$ to $c$ and $\eps$ is label jump from $c$ to $c^+$.
\end{compactitem}
\end{definition}

\fig{width=\linewidth}{bij-labeling-marked}{Bijection $\Phi$ from $d$-GS labelings to $d$-GS marked orientations.}

\begin{prop}\label{prop:bij-beta}
The map $\Phi$ is a bijection between the set $\bL_G$ of $d$-GS labelings of $G$ and the set $\bM_G$ of $d$-GS marked orientations of $G$. 
\end{prop}

\begin{proof}
We first show that the image of a $d$-GS labeling by $\Phi$ is a $d$-GS marked orientation. 
Let $\cL\in \bL_G$ and let $\cM=\Phi(\cL)$. 
First observe that Condition~(L3) for $\cL$ translates into Condition~(M3) for $\cM$. 
Also, Condition (L1) for $\cL$ implies Condition (M1) for $\cM$ as well as the part of Condition~(M2) about weights around vertices. The part of Condition (M2) about edges (weight $d-2$ for every inner edge) is a consequence of the fact that the sum of counterclockwise label jumps around inner edges is $d$, which holds for any $d$-GS labeling by Lemma~\ref{lem:ccw-jumps-edges}. Lastly, Condition (L0) for $\cL$ implies that for any inner arc $a$ whose initial vertex is an outer vertex $v_i$, the weight $\om(a)$ of $a$ is equal to the number of marks $m(a)$ in the corner of $v_i$ on the left of $a$. Moreover, Condition (M1) gives $m(a)\leq d-\deg(f)$ and condition (M3) gives $\om(a)\geq d-\deg(f)$. This gives (M0). Hence, $\cM$ is a $d$-GS marked orientation.

By the above, $\Phi$ is a map from $\bL_G$ to $\bM_G$. It is also clear that $\Phi$ is injective because knowing $\Phi(\cL)$ allows one to determine all the label jumps between ``adjacent corners'' (corners that are consecutive around faces or vertices) which together with Condition (L0) determines the corner labeling $\cL$. The label jumps determined from a marked orientation is indicated in the bottom part of Figure~\ref{fig:bij-labeling-marked}, and provides a tentative inverse mapping for $\Phi$.
In order to prove that $\Phi$ is surjective, we need to take a closer look at this inverse mapping. Namely, we need to check that, starting from a marked orientation $\cM\in \bM_G$, the `jump assignments'' determined as indicated in Figure~\ref{fig:bij-labeling-marked} can be satisfied by a $d$-labeling of $G$ (recall that a \emph{$d$-labeling} is simply an assignment of a value in $[d]$ to each inner corner of $G$).

The \emph{corner-graph} of $G$ is the directed graph $C_G$ defined as follows: the vertices of $C_G$ are the inner corners of $G$, and there is an oriented edge from a corner $c$ to a corner $c'$ in $C_G$ if $c'$ is the corner following $c$ in clockwise order around a face or vertex. The corner graph is represented in Figure~\ref{fig:angular-map}(c).
Note that the corner graph comes with an embedding in the plane determined by $G$; it has three types of inner faces corresponding to the inner vertices, inner faces, and inner edges of $G$ respectively. 
A \emph{$d$-jump function} for $G$ is an assignment of a number in $\{0,1,\ldots,d-1\}$ to each oriented edge of the corner graph $C_G$. A $d$-jump function is called \emph{exact} if its values are equal to the label jumps of a $d$-labeling of $G$. It is easy to see that a $d$-jump function $f$ is exact if and only if every simple cycle $C$ of $C_G$ satisfies:
\begin{equation}\label{eq:jumps-exact}
\sum_{a\in C^+}f(a)-\sum_{a\in C^-}f(a)\in d\ZZ,
\end{equation}
where $C^+$ (resp. $C^-$) is the set of edges of $C_G$ appearing clockwise (resp. counterclockwise) on~$C$. Indeed~\eqref{eq:jumps-exact} is the condition that ensures that labels can be ``propagated'' according to the jump assignments without encountering any conflict. Furthermore, it is easy to check that~\eqref{eq:jumps-exact} holds if and only if it holds for every cycle which is the contour of an inner face of $C_G$. 

In conclusion, if one fixes an assignment of jumps from every inner corner to the next corner around the faces and vertices of $M$, this assignment of jumps corresponds to a $d$-labeling of $G$ if and only if the sum of assigned jumps around each vertex, face and edge of $G$ is a multiple of~$d$. 
Using this criteria, we see that for any marked orientation $\cM\in\bM_G$, Conditions (M1) and (M2) imply that the `jump assignments'' determined as indicated in Figure~\ref{fig:bij-labeling-marked} can be realized by a $d$-labeling of $G$. There is a unique such $d$-labeling such that the corners incident to $v_1$ have label 1, and we denote this $d$-labeling by $\bPhi(\cM)$. Using Condition (M0) and (M1) for $\cM$, we see that the jump assignments for $\bPhi(\cM)$ along the outer edges of $G$ are equal to 1. This implies that $\bPhi(\cM)$ satisfies Condition (L0). Lastly, it is easy to see that Conditions (M1) and (M2) for $\cM$ imply Condition (L1) for $\bPhi(\cM)$, and that Condition (M3) implies Condition (L3), while Condition (L2) holds for $\bPhi(\cM)$ by definition of the jump assignments. Thus, $\bPhi$ is a map from $\bM_G$ to $\cL_G$. 

Finally, it is clear that $\bPhi\circ \Phi=\Id_{\cL_G}$ and $\Phi\circ \bPhi=\Id_{\cM_G}$, hence these are inverse bijections between the $d$-GS labelings and the $d$-GS marked orientations of $G$. 
\end{proof}

Next, we define the bijection $\Psi$ between $d$-GS marked orientations and $d$-GS angular orientations of $G$. This bijection is represented in Figure~\ref{fig:bij-marked-angular}.
In the angular map $G^+$, the \emph{original arcs} are those on original edges (edges of $G$) while the \emph{star arcs} are those on star edges. 

\begin{definition}
Given a $d$-GS marked orientation $\cM$ of $G$ with arc weights denoted by $\om$, we define a weighted orientation $\Psi(\cM)$ of $G^+$ as follows.
\begin{compactitem}
\item The weight in $\Psi(\cM)$ of an original inner arc $a$ is $\om(a)+\deg(f)-d$, where $f$ is the face of $M$ at the left of $a$. The weight of outer arcs in $\Psi(\cM)$ is 0.
\item Let $c$ be an inner corner of $G$, let $m$ be the number of marks of $c$ and let $f$ be the face of $G$ containing $c$. The weight in $\Psi(\cM)$ of the star arc from the star vertex $v_f$ to $c$ is~$m$, and the weight of the star arc from $c$ to $v_f$ is $d-\deg(f)-m$.
\end{compactitem}
\end{definition}

\fig{width=\linewidth}{bij-marked-angular}{Bijection $\Psi$ between $d$-GS marked orientations and $d$-GS angular orientations.}

\begin{prop}\label{prop:bij-gamma}
The map $\Psi$ is a bijection between the set $\bM_G$ of $d$-GS marked orientations of $G$ and the set $\bA_G$ of $d$-GS angular orientations of $G$. 
\end{prop}

\begin{proof}
We first show that the image of a $d$-GS marked orientation by $\Psi$ is a $d$-GS angular orientation. 
Let $\cM\in \bM_G$ and let $\cA=\Psi(\cM)$. It is clear that Conditions (M0) and (M1) for $\cM$ imply Conditions (A0) and (A1) respectively for $\cA$. Also Condition (M2) for an inner edge $e$ of $\cM$ (weight $d-2$ in $\cM$) implies condition (A2) for $e$ (weight $d-2-(d-\deg(f))-(d-\deg(f'))=\deg(f)+\deg(f')-d-2$ in $\cA$). Lastly, Condition (M2) for a vertex $v$ of $M$ (outgoing weight $d+\#marks$ in $\cM$) implies Condition (A2) for $v$. Indeed, for any original arc $a$ out of $v$, with $a'$ the preceding star arc in clockwise
order around $v$, the sum of the weights (in $\cA$) of $a$ and $a'$ is equal to $\om(a)-m$, where $m$ is the number of marks in the corner preceding $a$. 

It is easy to invert the mapping $\Psi$. Given a $d$-GS angular orientation $\cA$ of $G$ with arc weights denoted by $\om^+$, we define a marked orientation $\bPsi(\cM)$ of $G$ as follows.
\begin{compactitem}
\item The weight in $\bPsi(\bM)$ of an arc $a$ of $M$ is $\om^+(a)-\deg(f)+d$, where $f$ is the face of $M$ at the left of $a$. The weight of outer arcs in $\bPsi(\cA)$ is 0.
\item Let $c$ be an inner corner of $G$ in a face $f$. The number of marks of $c$ in $\bPsi(\cA)$ is the weight of the star arc from the star vertex $v_f$ to $c$.
\end{compactitem}

As before, it is easy to see that Conditions (A0), (A1) and (A2) for $\cA$ imply Conditions (M0), (M1) and (M2) respectively for $\bPsi(\cM)$. Moreover, Condition (M3) for $\bPsi(\cM)$ is immediate from the definition, hence $\bPsi(\cM)$ is a $d$-GS marked orientation.
Lastly, it is clear that $\bPsi\circ\Psi=\Id_{\bM_G}$ and $\Psi\circ\bPsi=\Id_{\bA_G}$, thus $\Psi,\bPsi$ are bijections.
\end{proof}

Lastly, we define a bijection $\Th$ between $d$-GS labelings and $d$-GS woods of $G$. This bijection is represented in Figure~\ref{fig:bij-labeling-wood}. Recall from Section~\ref{subsec:GS-woods} that we can interpret a $d$-GS wood $(W_1,\ldots,W_d)$ of $G$ in terms of ``arc colorings'': we say that an arc $a$ has color $i$ if it belongs to the spanning tree $W_i$, where $W_i$ is oriented toward its root $v_{i}$ as usual. By Condition (W1), the colors of a given arc are cyclically consecutive, and it will be convenient to use some notation for such intervals. For elements $i,j$ of $[d]$, we denote by $[i:j[$ the set of integers $\{i,i+1,i+2,\ldots,j-1\}$ modulo $d$. More precisely, if $i\leq j$ then $[i:j[:=\{k\in [d] \mid i\leq k<j\}$ and if $j<i$, then $[i:j[:=\{k\in [d]\mid k\geq i\textrm{ or }k<j\}$ (note the special case $[i:i[=\emptyset$). Roughly speaking, the bijection $\Th$ from $d$-GS labelings to $d$-GS woods is obtained by assigning to each arc $a$ the set of colors $[i:j[$, where $i,j$ are the labels preceding and following $a$. The precise definition is as follows.

\begin{definition}[Bijection between labelings and woods]
Given a $d$-GS labeling $\cL$ of $G$, we define a tuple $\Th(\cL)=(W_1,\ldots,W_d)$ of subsets of arcs of $G$ (interpreted as an arc coloring) as follows.
\begin{compactitem}
\item For all $i\in [d]$, the outer arc from $v_i$ to $v_{i+1}$ has all the colors except $i$, while the outer arc from $v_{i+1}$ to $v_{i}$ has no color.
\item An inner arc $a$ of $G$ has color set $[i:j[$, where $i$ and $j$ are the labels of the corners at the left and right of the arc $a$ respectively, at the initial vertex of~$a$. 
\end{compactitem}
Given a $d$-GS wood $\cW$ of $G$ (interpreted in terms of arc coloring), we define a corner labeling $\bTh(\cW)$ as follows.
\begin{compactitem}
\item The inner corners incident to the outer vertex $v_{i}$ receive label $i$.
\item A corner $c$ incident to an inner vertex $v$ has label $i$ if it is between the outgoing arcs of color $i-1$ and $i$ in clockwise order around $v$.
\end{compactitem}
\end{definition}

\fig{width=\linewidth}{bij-labeling-wood}{Bijection $\Th$ between $d$-GS labelings and $d$-GS woods.}

\begin{remark}
In the definition of $\bTh$, the rule for setting the corner labels for inner vertices and outer vertices can be unified, up to using the appropriate convention. Namely, given a $d$-GS wood $\cW$ of $G$, one can add an outgoing edge of color $k$ from $v_k$ to $v_{k-1}$ for all $k\in[d]$. With this convention, every vertex $v$ of $G$ has one outgoing arc of each color, and each inner corner incident to $v$ has label $i$ if it is between the outgoing arcs of color $i-1$ and $i$ in clockwise order around $v$.
\end{remark}

\begin{prop}\label{prop:bij-theta}
The map $\Th$ is a bijection between the set $\bL_G$ of $d$-GS labelings of $G$ and the set $\bW_G$ of $d$-GS woods of $G$. The map $\bTh$ is the inverse bijection. 
\end{prop}

The proof of Proposition~\ref{prop:bij-theta} is a bit technical and we postpone it to Section~\ref{sec:remaining-proofs}.

\section{A further incarnation of grand-Schnyder structures for a subclass of $d$-adapted maps}\label{sec:arc_labeling}

We give here a further incarnation of grand-Schnyder structures for a subclass of $d$-adapted maps, called \emph{edge-tight}. This incarnation
will be particularly convenient to establish connections between grand-Schnyder structures and classical objects such as transversal structures and Felsner woods.

A $d$-map $G$ is called \emph{edge-tight} if for every inner edge $e$ the incident faces $f,f'$ satisfy $\deg(f)+\deg(f')=d+2$. Equivalently, $G$ is 
edge-tight if for every inner edge $e$ the cycle enclosing the two faces incident to $e$ has length~$d$. 
It is easy to see that, for such a map, there exist some integers $g,h$ such that $g+h=d+2$ and all inner faces have degree $g$ or $h$, and every inner edge has a face of degree $g$ on one side and of degree $h$ on the other side. Note also that if $g\neq h$, then every inner vertex has even degree. The example in Figure~\ref{fig:example_edge_tight} has $g=3,h=4$. 

For a $d$-map $G$, an \emph{arc labeling} is the assignment of a label in $[d]$ to any inner arc, and the assignment of label $i$ to the outer arc $(v_i,v_{i+1})$ for all $i\in[d]$. 
A \emph{label jump} from an arc $a$ labeled $i$ to an arc $a'$ labeled $i'$ is defined as the integer $\delta\in \{0,1,\ldots,d-1\}$ such that $i+\delta=i'$ modulo $d$. For an inner vertex $v$ of $G$, we consider the arcs having initial vertex $v$, and we call \emph{$v$-clockwise jump} a label jump from such an arc to the next in clockwise order around $v$.
For an inner face $f$ of $G$, we consider the incident arcs having $f$ on their right, and we call \emph{$f$-clockwise jump} a label jump from such an arc to the next in clockwise order around $f$.

\begin{figure}
\begin{center}
\includegraphics[width=0.9\linewidth]{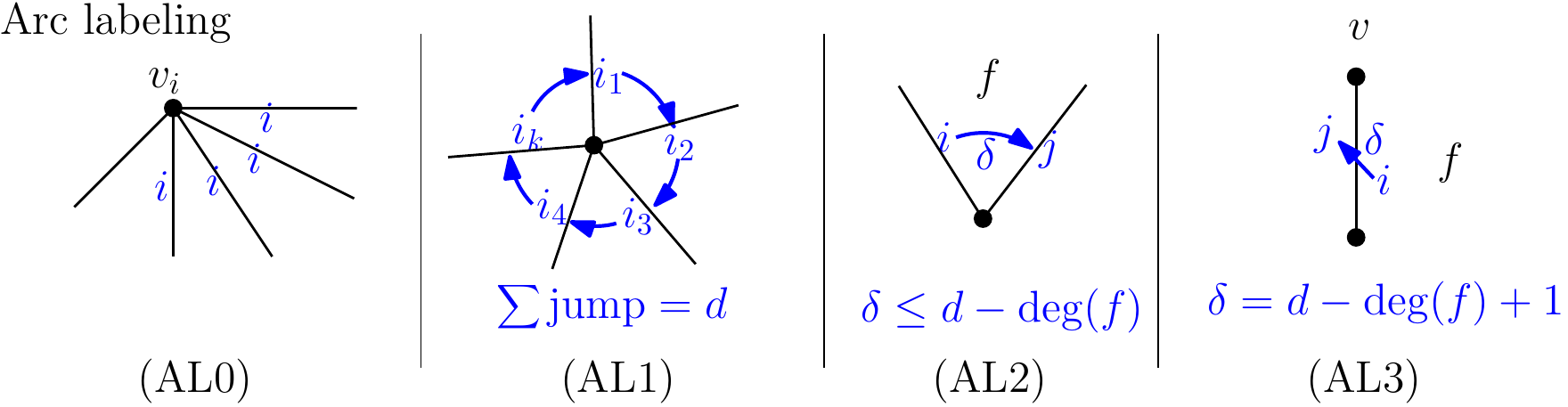}
\end{center}
\caption{Conditions defining grand-Schnyder arc labelings. The drawing convention is that arc labels are indicated on the right of each arc.}
\label{fig:local_arc}
\end{figure}

\begin{definition}\label{def:arcLabeling}
For $G$ a $d$-map, a \emph{grand-Schnyder arc labeling} of $G$ (or \emph{$d$-GS arc labeling} for short) is an arc labeling of $G$  satisfying the following conditions which are represented in Figure~\ref{fig:local_arc}:
\begin{itemize}
\item[(AL0)] For all $i\in [d]$, all the inner arcs with initial vertex $v_i$ have label $i$.
\item[(AL1)] For every inner vertex $v$ of $G$, the sum of $v$-clockwise jumps  is $d$.
\item[(AL2)] 
For every two consecutive arcs $a,a'$ in clockwise order around an inner vertex $v$ (both having initial vertex $v$), the label jump from $a$ to $a'$ is at most $d-\deg(f)$, where  $f$ is the face containing the corner between $a$ and $a'$. 
\item[(AL3)] For two opposite inner arcs $a$ and $-a$,  the label-jump from $a$ to $-a$ is $d+1-\deg(f)$, where $f$ is the face on the right of $a$. 
\end{itemize}
\end{definition}
An example is given in Figure~\ref{fig:example_edge_tight}. 

\begin{figure}
\begin{center}
\includegraphics[width=\linewidth]{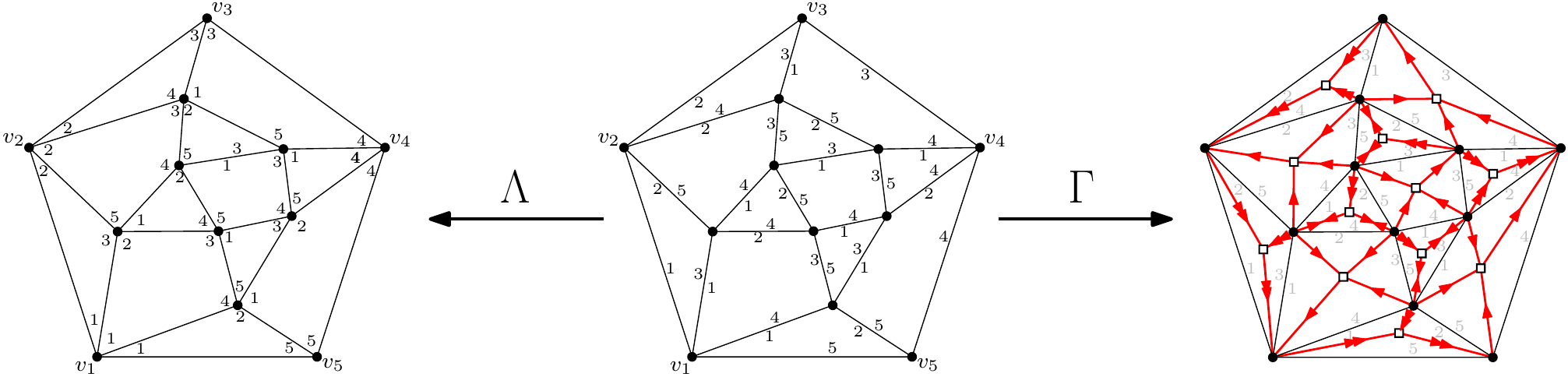}
\end{center}
\caption{Center: an edge-tight 5-adapted map (with inner face degrees in $\{3,4\}$), 
endowed with a $5$-GS arc labeling, where labels of arcs are indicated on the right of the arcs. 
Left: the corresponding corner labeling obtained by applying the bijection $\Lambda$ (that is, pulling 
the label of every arc to the corner at the initial vertex of the arc, on the right side of the arc).  
Right: the corresponding $5$-GS angular orientation obtained by applying the bijection $\Gamma$ (the weights of original arcs are 0 since the map is edge-tight).}
\label{fig:example_edge_tight}
\end{figure}

\begin{lemma}\label{lem:necessity_balanced}
If a $d$-map $G$ can be endowed with a $d$-GS arc labeling, then $G$ is edge-tight.
\end{lemma}
\begin{proof}
Let $a,a'$ be two opposite inner arcs, and let $f,f'$ be the incident faces on the right of $a$ and $a'$ respectively. Condition (AL3) implies that the label-jump from $a$ to $a'$ is $d+1-\deg(f)$, while the  label-jump from $a'$ to $a$  is $d+1-\deg(f')$. Since these jumps are non-zero (and less than $d$), they must add up to $d$, so that $\deg(f)+\deg(f')=d+2$. 
\end{proof}


We also have the following property (the analog of Property (L2) and Property (L1) for faces in the definition of $d$-GS corner labelings, the property is here a consequence of the definition):

\begin{lemma}\label{lem:arc-label}
Let $G$ be a $d$-map  endowed with a $d$-GS arc labeling. For any inner face $f$, the $f$-clockwise jumps are non-zero and add up to $d$.
\end{lemma}
\begin{proof}

For an inner arc $a$, we denote by $-a$ the opposite arc, and by $a'$ the arc following $a$ in clockwise order around the face $f$ at the right of $a$. This is represented in Figure \ref{fig:local_arc_facejumps}.
 We claim that the label jumps between $a,a'$ and $-a$ are related by 
 \begin{equation}\label{eq:arc-labeling-jumps}
 \jp(a,a')=\jp(a,-a)- \jp(a',-a).
 \end{equation}
Indeed, by (AL3) one has $\jp(a,-a)=d-\deg(f)+1$, and by  (AL2) one has $\jp(a',-a)<d-\deg(f)+1$ (this holds even if the initial vertex of $a'$ is an outer vertex since in this case $\jp(a',-a)=0$), thus $\jp(a,-a)-\jp(a',-a)>0$. This inequality implies \eqref{eq:arc-labeling-jumps}, and it also implies $\jp(a,a')>0$ for every inner arc $a$. Note that the label jump $\jp(a,a')$ from an outer arc $a=(v_{i},v_{i+1})$ to the next arc $a'$ around the incident inner face is equal to $1$ by (AL0). Thus, for every inner face $f$, all the $f$-clockwise jumps are non-zero.

\fig{width=.15\linewidth}{local_arc_facejumps}{Notation for the proof of Lemma \ref{lem:arc-label}.}
It remains to show that the $f$-clockwise jumps add up to $d$ for any inner face $f$.
 Let $V,E,F$ and $A$ be the sets of inner vertices, inner edges, inner faces, and inner arcs of $G$ respectively.
 The sum of clockwise-jumps around the inner faces of $G$ is
 $$J=d+\sum_{a\in A}\jp(a,a'),$$
where the term $d$ corresponds to the label jumps from the outer arcs $a_i=(v_i,v_{i+1})$ to the arc $a_i'$ following $a_i$ in clockwise order around the face $f_i$ at the right of $a_i$.
By~\eqref{eq:arc-labeling-jumps} we get 
$$J=d+\sum_{a\in A}\jp(a,-a)- \sum_{a\in A}\jp(a',-a).$$
Note that $\sum_{a\in A}\jp(a,-a)=d\,|E|$, since for any inner arc $a$, one has $\jp(a,-a)+\jp(-a,a)=d$. Moreover, $\sum_{a\in A}\jp(a',-a)=d\,|V|$, since this is the sum of clockwise jumps around the inner vertices of $G$ (the jumps around outer vertices have contribution 0 by (AL0)). This gives  $J=d+d\,|E|-d\,|V|=d\,|F|$ by the Euler formula.
Since the sum of clockwise jumps around every inner face of $G$ is a non-zero multiple of $d$, the identity $J=d\,|F|$ implies that the sum of clockwise jumps around each inner face is $d$.
\end{proof}

As we now explain, for an edge-tight $d$-map, the $d$-GS arc labelings are in easy bijection with the $d$-GS corner labeling. The bijection $\Lambda$ is illustrated in Figure \ref{fig:example_edge_tight}.

\begin{prop}
For $G$ an edge-tight $d$-map endowed with a $d$-GS arc labeling $\cAL$, let $\cL=\Lambda(\cAL)$ be the 
labeling of inner corners of $G$ such that any inner corner receives the label of the arc that has the corner on its right at its origin.  Then $\Lambda$ is a bijection between the set $\bAL_G$ of $d$-GS arc labelings of $G$, and the set $\bL_G$ of $d$-GS corner labelings of $G$.

Hence, by Theorem~\ref{thm:main}, a $d$-map admits a $d$-GS arc labeling if and only if it is edge-tight and $d$-adapted.
\end{prop}

\begin{proof}
Conditions (AL0), (AL1), (AL3), and Lemma~\ref{lem:arc-label} ensure that $\Lambda(\cAL)$ is a $d$-GS corner labeling. The inverse mapping $\bLa$ is as follows. For $\cL$ a $d$-GS corner labeling, $\cAL=\bLa(\cL)$ is the arc labeling of $G$ such that each arc $a$ receives the label of the corner at its origin and in the face on its right. Since $\cL$ is a $d$-GS corner labeling, Properties (AL0), (AL1) are satisfied by $A$. Next, we prove Property (AL3).
Let  $a,a'$ be opposite inner arcs, and let $f,f'$ be the faces on the right of $a,a'$ respectevily. Condition (L3) implies that the label-jump from $a$ to $a'$ satisfies $\jp(a,a')\geq 1+d-\deg(f)$, 
and that the label-jump from $a'$ to $a$ satisfies $\jp(a',a)\geq 1+d-\deg(f')$. Since 
$\deg(f)+\deg(f')=d+2$, we must have  $\jp(a,a')= 1+d-\deg(f)$ and
 $\jp(a',a)= 1+d-\deg(f')$, so that (AL3) holds. It remains to prove (AL2).
Let $a,a'$ be two consecutive arcs in clockwise order around an inner vertex $v$, let $f$ be the face on the right of $a$, and let $a''$ be the arc opposite to $a'$. 
Conditions (L1) and (L2) imply that $1\leq \jp(a'',a)\leq d+1-\deg(f)$. Since $\jp(a'',a')=d+1-\deg(f)$, we conclude that $\jp(a,a')\leq d-\deg(f)$, hence Condition (AL2) holds.  Thus, $\cAL=\bLa(\cL)$ is a $d$-GS arc labeling.

Lastly, it is clear that $\bLa\circ\Lambda=\Id_{\bAL_G}$ and $\Lambda\circ\bLa=\Id_{\bL_G}$, thus $\Lambda,\bLa$ are bijections.
\end{proof}

In terms of $d$-GS angular orientations, the condition of being edge-tight is reflected by the fact that all original edges have weight $0$. Let $G$ be a $d$-adapted edge-tight map. The bijection $\Gamma$ between the $d$-GS arc labelings of $G$ and the $d$-GS angular orientations of $G$ is represented in Figure \ref{fig:bij-arc-labeling-angular}. For an arc labeling $\cAL$ of $G$, the angular orientations $\Gamma(\cAL)$ is obtained as follows:  for each pair $a,a'$ of consecutive inner arcs in clockwise order around a vertex $v$, with $f$ the face incident to $v$ between $a$ and $a'$, the weight of the star-arc from $v$ to $v_f$ is equal to the clockwise jump from $a$ to $a'$ (moreover, letting $s_i$ be the star vertex in the face incident to $(v_{i-1},v_i)$, as usual the weight of the star-arc from $v_i$ to $s_i$ is set to $d-\mathrm{deg}(s_i)$ and the weight of the star-arc from 
$v_{i+1}$ to $s_i$ is set to $0$). An example is given in Figure~\ref{fig:example_edge_tight}.  
\fig{width=\linewidth}{bij-arc-labeling-angular}{Bijection $\Gamma=\Psi\circ\Phi\circ\Lambda$ between the sets $\bAL$ of $d$-GS arc labelings and the set $\bA$ of angular orientations for an edge-tight map $G$}

The conditions of being $d$-adapted and edge-tight are quite restrictive, and impose some constraints on the degrees of the faces. Recall that for an edge-tight $d$-adapted map there exist some integers $g, h$, with $g+h=d+2$, such that every inner edge has a face of degree $g$ on one side and a face of degree $h$ on the other side. 

\begin{lem}\label{lem:degree-edge-tight}
Let $g\leq h$. The set $\bE_{g,h}$ of edge-tight $(g+h-2)$-adapted maps with inner face degrees in $\{g,h\}$ and at least one inner vertex is non-empty if and only if $(g,h)$ belongs to the set
\begin{equation*}
S=\{(2,d),~d\geq 3\}\cup \{(3,3), (3,4), (3,5),(4,4),(5,5)\}.
\end{equation*}
Moreover, in that case, the family $\bE_{g,h}$ is infinite. 
\end{lem}


\begin{proof} 
It is easy to see that the inner vertices in edge-tight adapted maps have degree at least 3. Moreover, if $g\neq h$ the degree of vertices is even, hence at least 4.
Combining these observations with the Euler relation (and the incidence relation between faces and edges), one can check that $E_{g,h}$ is empty unless it is in the set $S$. 
About the second statement,  
the maps in $\bE_{2,d}$ are those obtained from $d$-angulations of girth $d$, opening 
every inner edge (and an arbitrary subset of the outer edges) into a face of degree $2$. 
The maps in $\bE_{3,3}$ (resp. $\bE_{4,4}$) are the so-called \emph{irreducible} triangulations of the 4-gon (resp. \emph{irreducible} quadrangulations of the hexagon) which are known to form an infinite family \cite{tutte1962census,mullin1968enumeration,FuPoScL,Fu07b,bouttier2014irreducible}.  
Finally, for the three other cases $(g,h)\in\{(5,5),(3,4),(3,5)\}$, as shown in Figure~\ref{fig:fractal}, one can construct infinitely many maps in $\bE_{g,h}$ by a ``tunnel-construction": given a sequence $L_0,\ldots,L_k$
such that $L_i$ has a marked corner in the outer face for $0\leq i<k$, and a marked corner in an inner face for $0< i\leq k$, the assembled map is obtained by identifying the 
marked outer corner of $L_i$ with the marked inner corner of $L_{i+1}$ for $0\leq i<k$ (thereby identifying the contours of their incident faces). 
\end{proof}

\begin{figure}
\begin{center}
\includegraphics[width=\linewidth]{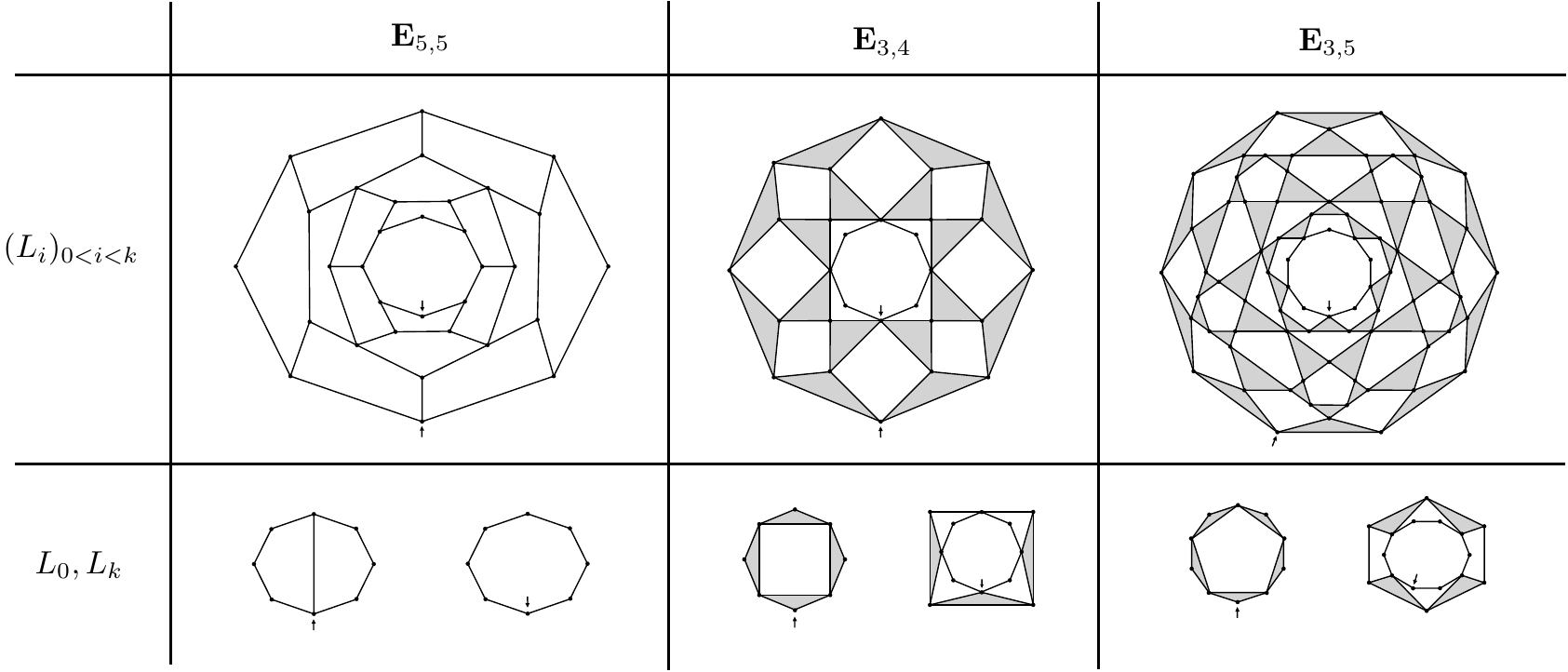}
\end{center}
\caption{The maps $(L_i)_{0\leq i\leq k}$ whose tunnel-assembling yields a map in $\bE_{5,5}$ (left column), $\bE_{3,4}$ (middle column), and $\bE_{3,5}$ (right column).} 
\label{fig:fractal}
\end{figure}


\begin{figure}
\begin{center}
\includegraphics[width=\linewidth]{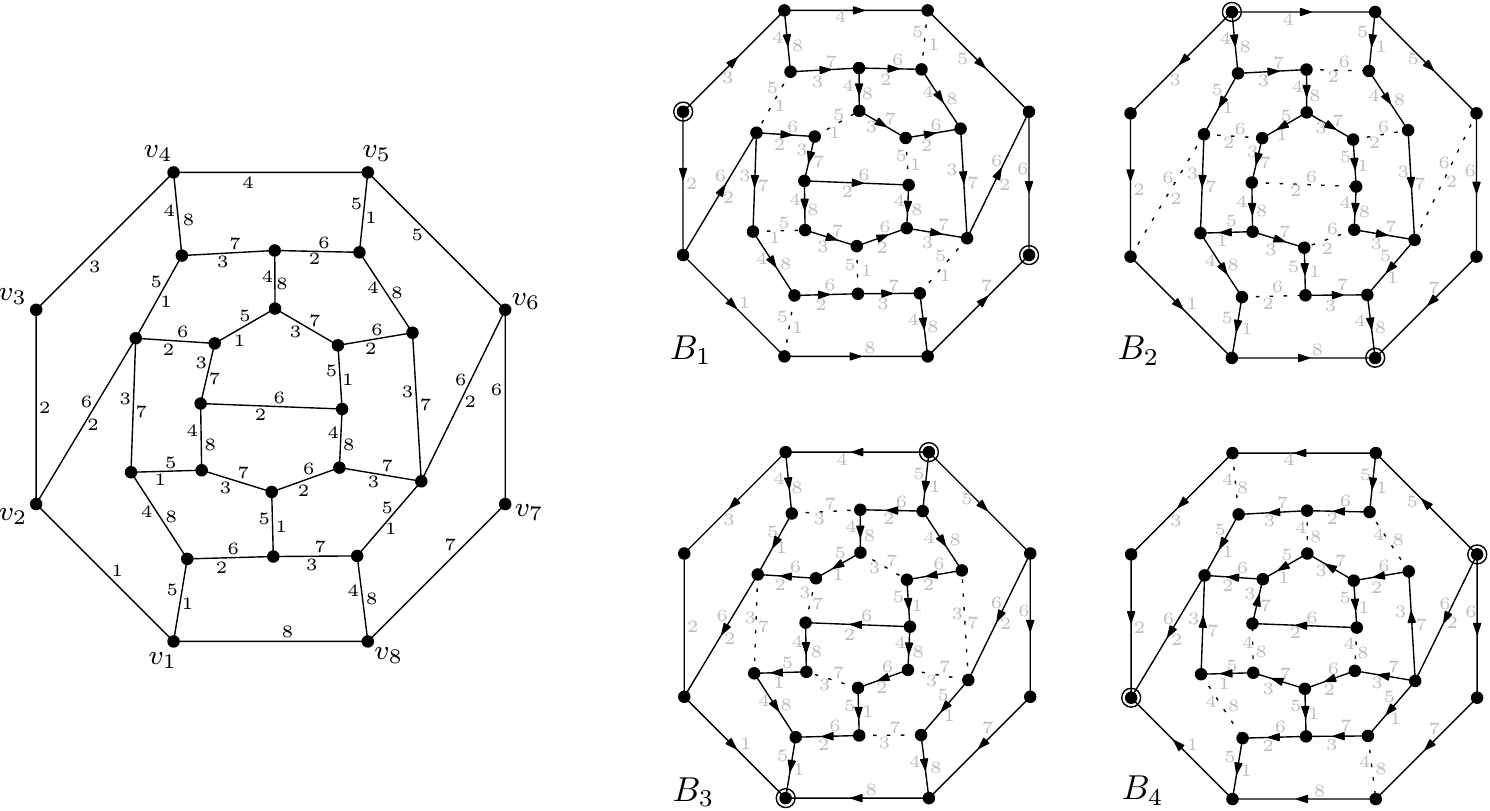}
\end{center}
\caption{Left: an edge-tight pentagulated 8-adapted map endowed with an $8$-GS arc labeling. 
Right: the 4 associated plane bipolar orientations $B_1,B_2,B_3,B_4$.}
\label{fig:octogon}
\end{figure}

For the rest of this section we focus on the special case of $2b$-adapted maps with all inner faces of the same degree $b+1$. These are edge-tight maps, and by Lemma \ref{lem:degree-edge-tight} only the values $b\in \{2,3,4\}$ are relevant. Note that for any arc labeling $\cAL$ of such a map, the  labels of any two opposite inner arcs are opposite modulo $2b$ (that is, the labels of an edge are of the form $\{j,j+b\}$). We now associate to $\cAL$ a tuple of oriented submaps, as illustrated in  Figure~\ref{fig:octogon}.
\begin{definition}\label{def:Delta}
Let $G$ be such a $(b+1)$-angulation of the $2b$-gon. To an arc labeling $\cAL$ of $G$, we associate a tuple $\beta(\cAL)=(B_1,\ldots,B_{2b})$ of oriented maps. For each  $i\in[2b]$,  the oriented map $B_i$ is obtained from $\cAL$ as follows:
\begin{itemize}
\item
The inner edges of $G$ with arc labels $\{i,i+b\}$ are removed, and the other inner edges are oriented as their arc whose label $j$ satisfy $i<j<i+b$.
\item
The outer edges of $G$ are directed so as to form two paths from $s_i:=v_{i+\lfloor b/2\rfloor}$ to $t_i:=v_{i-\lceil b/2\rceil}$.
\end{itemize} 
\end{definition}
Note that if $\beta(\cAL)=(B_1,\ldots,B_{2b})$, then the oriented map $B_{i+b}$ is the reverse of $B_i$. 

\begin{Def}
A \emph{plane bipolar orientation} is a plane map $B$ endowed with an acyclic orientation having a unique source $s$ and a unique sink $t$ (also called its target), both incident to the outer face. 
\end{Def}

Let $B$ be a plane bipolar orientation with source $s$ and sink $t$. It is well known that for any vertex $v\notin\{s,t\}$, the edges incident to $v$ form an interval of ingoing edges and an interval of outgoing edges~\cite{de1995bipolar}. 
The \emph{leftmost outgoing tree} of $B$ is the spanning tree rooted at the sink, where the parent-edge of any non-sink vertex is its leftmost outgoing edge. The \emph{rightmost ingoing tree} of $B$ is the spanning tree rooted at the source, where the parent-edge of any non-source vertex is its rightmost ingoing edge (the last one in counterclockwise order).  

\begin{prop}
Let $b\in\{2,3,4\}$, and let $G$ be an edge-tight $2b$-adapted map with inner faces of degree $b+1$, endowed with a $(2b)$-GS arc labeling $\cAL$. Then for $i\in[2b]$, the oriented map $B_i$ is a plane bipolar orientation with source $s_i$ and target $t_i$.  
\end{prop}

\begin{proof}
Let $i\in[2b]$. Property (AL2) ensures that, in $G$, the clockwise-jumps around an inner vertex $v$ are at most $b-1$, hence in $B_i$ $v$ has at least one outgoing and at least one ingoing edge. Also, by construction, the only source (resp. sink) among the outer vertices is $s_i$ (resp. $t_i$). It remains to show that $B_i$ is acyclic. 

First, one easily checks that no directed cycle can visit an outer vertex (indeed, for each of the two outer paths from $s_i$ to $t_i$, the vertices on the first half of the path have all their incident inner edges outgoing, and the vertices on the second half of the path have all their incident inner edges ingoing). 

Next, one checks that no contour of an inner face $f$ of $G$ can be a directed cycle in $B_i$. 
From what we have seen above, we can assume that $f$ is not incident to any outer vertex. In that case,  
the clockwise arcs around $f$ that are (seeing an arc as a directed edge) in $B_i$ are those of label between $i$ and $i+b$ (excluded).  By Lemma~\ref{lem:arc-label},   
all clockwise-jumps around $f$ are at most~$b$.
Hence, not all edges around $f$ can be clockwise in $B_i$, so there is no inner face of $G$ whose contour is a clockwise cycle of $B_i$. Similarly,  
no contour of inner face of $G$ is a   counterclockwise cycle of $B_i$.    

Finally, for $i\in[2b]$, assume that $B_i$ has a directed cycle, and let $C$ be a minimal one, i.e., the interior of $C$ does not contain the interior of another directed cycle $C'$. Assume there is a vertex $v$ in the interior of $C$. Since the vertices in the interior of $C$ are different from  $\{s_i,t_i\}$, from $v$ starts a directed path in $B_i$. By minimality of $C$, this path can not loop into a cycle, hence 
it has to reach a vertex on $C$. Similarly, from $v$ starts a path of edges of $B_i$ taken in reverse direction, which also has to reach a vertex on $C$. The concatenation of these two paths forms a directed path in $B_i$, in the interior of $C$, starting and ending on $C$. This contradicts the minimality of $C$. Hence there is no vertex in the interior of $C$. Similarly there can be no edge of $B_i$ inside $C$ (it would form a chord, contradicting the minimality of $C$). Since $C$ is not the contour of a face of $G$, there must a chord $e$  inside $C$ with labels $(i,i+b)$. However, if $C$ is clockwise, Condition (AL1) ensures that no arc of label $i$ can leave a vertex of $C$ toward the interior of $C$; similarly, if $C$ is counterclockwise, no arc of label $i+b$ can leave a vertex of $C$ toward the interior of $C$. 
We reach a contradiction. Hence, $B_i$ is a plane bipolar orientation.        
\end{proof}

Figure~\ref{fig:octogon} shows an example for $b=4$. We will discuss the case $b=2$ (connected to transversal structures) in Section~\ref{sec:transversal}, and the case $b=3$ (connected to Felsner woods) in Section~\ref{sec:Felsner}. 

\begin{remark}\label{rk:bipolarBi}
Let $b\in\{2,3,4\}$, and let $G$ be an edge-tight $2b$-adapted map with inner faces of degree $b+1$, endowed with a $(2b)$-GS arc labeling $\cAL$. 
There is a close connection between the plane bipolar orientations $B_i$, and the $(2b)$-GS wood $(W_1,\ldots,W_{2b})$ in bijection with $\cAL$.
Namely, for $i\in[2b]$, $W_i$ is the leftmost outgoing tree of $B_i$ (and also the rightmost ingoing tree of $B_{i+b}$), up to changing the root-vertex from $v_i$ to $v_{i-\lceil b/2\rceil}$, and changing the missing outer edge from $(v_i,v_{i+1})$ to $(v_{i+\lfloor b/2\rfloor -1},v_{i+\lfloor b/2\rfloor})$ (that is, the missing outer edge is the same for $b\in\{2,3\}$, and changes to $(v_{i+1},v_{i+2})$ for $b=4$).
\end{remark}


\section{Connections to previously known structures}\label{sec:classical}

In this section, we explain how the $d$-GS framework emcompasses previously known structures such as Schnyder woods (and more generally Schnyder decompositions) and transversal structures (a.k.a. regular edge labelings).

\subsection{Schnyder decompositions as GS-structures on $d$-angulations}\label{sec:Schnyder-decompositions}
We start by showing that Schnyder woods, as introduced in \cite{Schnyder:wood1,Schnyder:wood2}, can be identified with the grand-Schnyder structures on triangulations. Recall that a plane triangulation admits a Schnyder wood if and only if it it has no loop or multiple edges, that is, if its girth is $3$. A generalization of Schnyder woods, are the \emph{Schnyder decompositions}  defined in \cite{OB-EF:Schnyder} for all $d\geq 3$. Several incarnations of Schnyder decompositions were given in \cite{OB-EF:Schnyder}, and it was shown that a $d$-angulation admits a Schnyder decomposition if and only if its girth is $d$ (that is, every cycle has length at least $d$).

Let us now compare Schnyder decompositions to grand-Schnyder structures. The first observation is that a $d$-angulation is $d$-adapted if and only if its girth is $d$. Let $G$ be a  $d$-angulation of girth $d$. By definition the  $d$-GS marked orientations of $G$ have no mark at all, hence the definition of $d$-GS marked orientations simplifies: these are the weighted orientations where the outer vertices and edges have weight $0$, the inner edges have weight $d-2$, and the inner vertices have weight $d$. Such weighted orientations are called \emph{$d/(d-2)$-orientations} in~\cite{Bernardi-Fusy:dangulations,OB-EF:Schnyder}, and they are one of the incarntions of Schnyder decompositions. Hence, the notions of  Schnyder decompositions and grand-Schnyder structures coincide for $d$-angulations. Note that for the original case $d=3$, the $d/(d-2)$-orientations are classical orientations (weight 1 per inner edge) such that every inner vertex has outdegree 3.

Let us now discuss the other incarnations of $d$-GS structures for $d$-angulations, and how they compare to the definitions in \cite{OB-EF:Schnyder}
First, observe that for a $d$-angulation $G$, the  $d$-GS angular orientation have no weight on star edge, hence the notion of  $d$-GS angular orientation again simplifies and identifies with $d/(d-2)$-orientations.

Next, consider $d$-GS labelings of $d$-angulations.
In each inner face, Conditions~(L1)  and (L2) imply that 
the corner labels have to be $1,2,\ldots,d$ in clockwise order around the face (no label is missing), while
the total clockwise jump around any inner vertex is $d$. With Conditions~(L0),(L1),(L2) we thus 
recover the definition of corner labelings as defined in~\cite{OB-EF:Schnyder} for $d$-angulations of girth $d$. Condition~(L3) is actually redundant in that case since it is a consequence of (L2). 
Moreover Lemma~\ref{lem:ccw-jumps-edges} ensures that for each inner edge $e=(u,v)$, the clockwise
jump accross $e$ at $u$ plus the clockwise jump across $e$ at $v$ add up to $d-2$, so that clockwise-jumps across edges at inner vertices are at most $d-2$. In particular, for $d=3$, 
at inner vertices all corner labels appear (the incident corners form three non-empty groups, of 1's, 2's and 3's in clockwise order), as in the definition of Schnyder labelings for triangulations~\cite{Schnyder:wood1}.  

Finally, let us consider the $d$-GS woods of $d$-angulations.
Conditions (W0), (W1), and (W2) in Definition~\ref{def:woods} give exactly the definition
of Schnyder woods for $d$-angulations of girth $d$ as defined in~\cite{OB-EF:Schnyder}. This includes the classical case $d=3$ of Schnyder woods of triangulations~\cite{Schnyder:wood1} (with a slight change on the rooting convention: in classical Schnyder woods, for $i\in\{1,2,3\}$ the tree $W_i$ would be naturally rooted at $v_{i-1}$, and the missing outer edge would be $(v_{i},v_{i+1})$). For $d$-angulations, the additional Condition~(W3) in Definition~\ref{def:woods} is redundant.  
Indeed, it only gives the constraint that no inner arc in $W_i$ is ending at $v_i$ or $v_{i+1}$ (which is already required by (W0)), 
and that, for every inner arc $a$ ending at an inner vertex $v$, if $a$ is in $W_i$ and the opposite arc is in none of $W_1,\ldots,W_d$, then $a$ appears strictly between  the outgoing arcs $a_{i+1}$ and $a_i$ in clockwise order around $v$ (which is already required by (W2)).  

We mention that, in the definition of Schnyder woods for $d$-angulations given in~\cite{OB-EF:Schnyder}, it is also required that every edge belongs to $d-2$ trees, but this requirement actually follows from (W0), (W1), (W2). Indeed, these conditions easily imply that every inner edge belongs to at most $d-2$ trees, while the total number of edges in the $d$ trees is $d$ times the number of inner vertices, which is also $(d-2)$ times the number of inner edges.


\subsection{Transversal structures as GS-structures on triangulations of the square}\label{sec:transversal} 
Recall that regular edge-labelings were introduced by He in~\cite{He93:reg-edge-labeling}. They were later rediscovered by the second author~\cite{Fu07b} who coined the term \emph{transversal structures}, which we will adopt here. Transversal structures are defined on \emph{triangulations of the square} (that is, 4-maps such that every inner face has degree 3). We now show that the notions of transversal structures and $4$-GS structures coincide for triangulations of the square.

\begin{figure}[h!]
\begin{center}
\includegraphics[width=12cm]{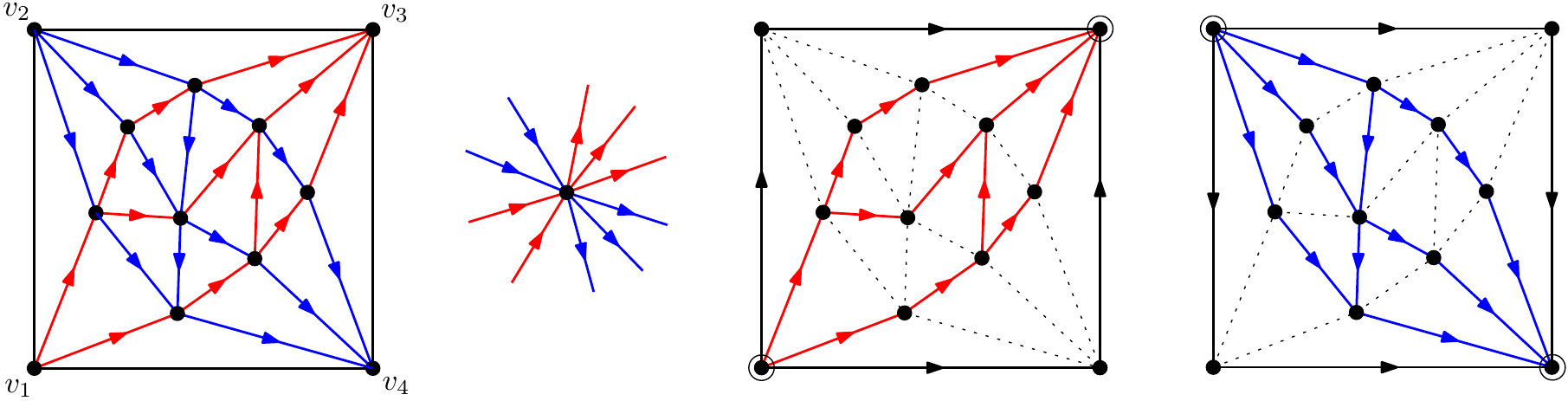}
\end{center}
\caption{From left to right: a transversal structure on a triangulation of the square, the local condition at inner vertices, the red bipolar orientation, and the blue
bipolar orientation.}
\label{fig:transvers}
\end{figure}

First, let us recall that a triangulation of the square admits a transversal structure if and only if its non-facial cycles have length at least 4.\footnote{These are sometimes called  \emph{irreducible} triangulations of the square in the literature~\cite{Fu07b,bouttier2014irreducible}.} In other words, a triangulation of the square admits a transversal structure if and only if it is $4$-adapted. Note that triangulations of the 4-gon are also \emph{edge-tight} in the sense of Section \ref{sec:arc_labeling}.

For a triangulation of the square $G$, a \emph{transversal structure} is
an orientation and partition  of the inner edges into red and blue oriented edges such that the following conditions hold (see Figure~\ref{fig:transvers}):
\begin{itemize}
\item[(T0):]
The inner edges at $v_1,v_2,v_3,v_4$ are respectively outgoing red, outgoing blue, ingoing red, and ingoing blue.
\item[(T1):]
Around each inner vertex, the incident edges in clockwise order form $4$ non-empty groups: ingoing red, ingoing blue, outgoing red, outgoing blue. 
\end{itemize}

There is a simple bijection $\al$ between the set $\bT_G$ of transversal structures of $G$ and the set $\bAL_G$ of $4$-GS arc labelings of $G$; see Figure \ref{fig:corresp_GS_transversal}. From a transversal structure $\cT\in \bT_G$, one defines an arc labeling $\al(\cT)$ by assigning the label 1 (resp. 2) to the red (resp. blue) arcs of $\cT$, and the label 3 (resp. 4) to the opposite of red (resp. blue) arcs of $\cT$. It is clear that  $\al(\cT)$ satisfy conditions (AL0-AL4) of arc labelings, hence $\al(\cT)\in \bAL_G$. Conversely, for $\cAL\in\bAL_G$, Condition (AL3) implies that opposite arcs have opposite label modulo 4, and Condition (AL2) implies that around every inner vertex there are 4 non-empty groups of outgoing arcs of label 1,2,3, and 4 respectively (in this order clockwise around $v$). Hence,  one can construct a transversal structure $\bal(\cAL)$ whose red (resp. blue) arcs are the arcs labeled 1 (resp. 2) in  $\cAL$. Obviously $\al$ and $\bal$ are inverse mappings, hence bijections, between $\bT_G$ and $\bAL_G$.

It is well known~\cite{He93:reg-edge-labeling} that 
 a transversal structure yields two plane bipolar orientations: the \emph{red bipolar orientation} is obtained by erasing the blue edges, and orienting the outer edges in the direction from $v_1$ to $v_3$; the \emph{blue bipolar orientation} is obtained by erasing the red edges, and orienting the outer edges in the direction from $v_2$ to $v_4$.  
Note that the red (resp. blue) bipolar orientation of the transversal structure $\cT$  is the plane bipolar orientation $B_4$ (resp. $B_1$) associated to $\cAL=\al(\cT)$ by the mapping $\beta$ of Definition \ref{def:Delta}.

\begin{figure}
\begin{center}
\includegraphics[width=12cm]{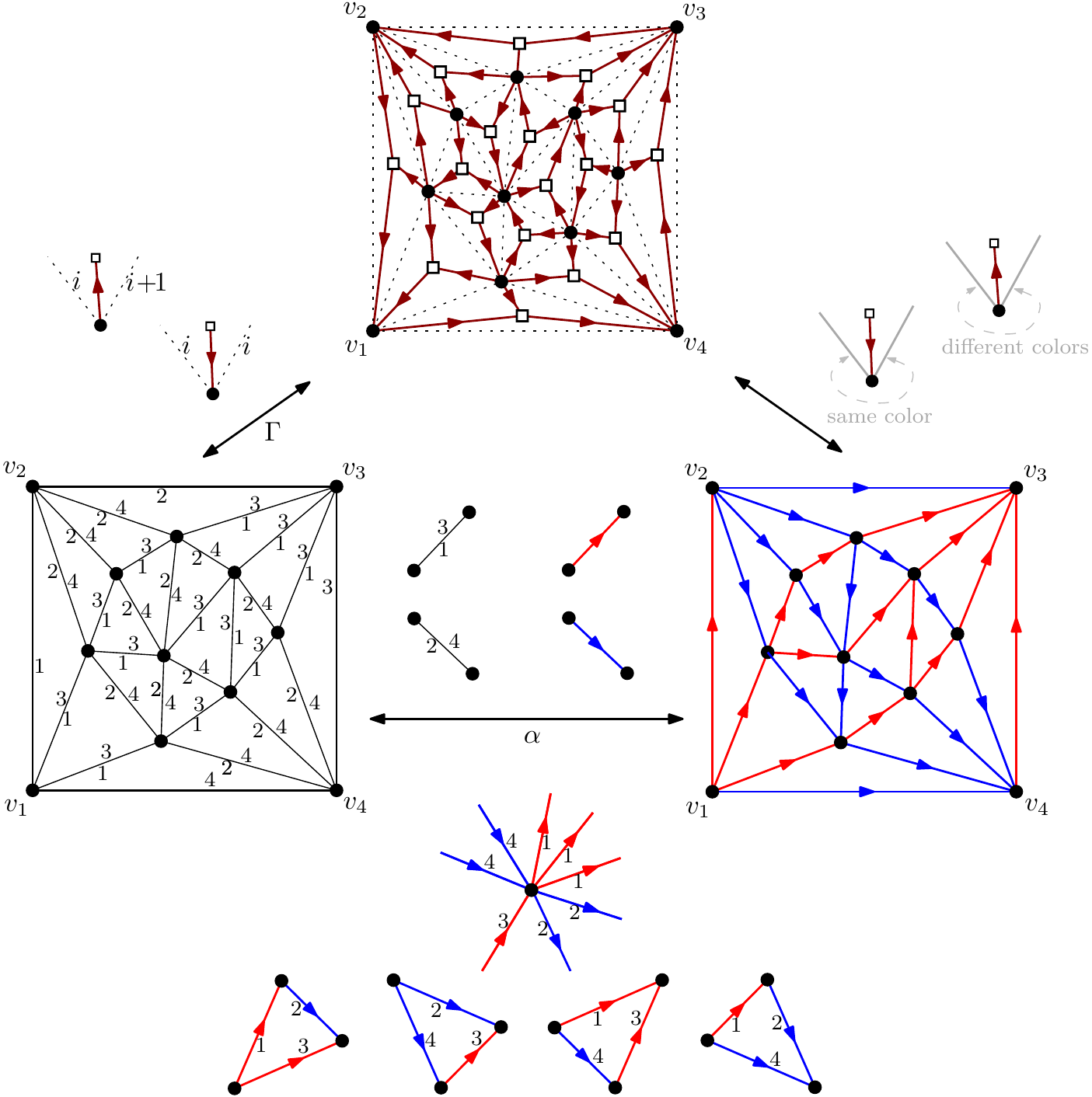}
\end{center}
\caption{On the left, a $4$-GS arc labeling on a triangulation of the 4-gon. On the right, the corresponding transversal structure (upon orienting the outer edges from $v_1$ toward $v_3$,  and coloring $(v_i,v_{i+1})$ red/blue for $i$ odd/even). The two bottom rows show the  local conditions at inner vertices and inner faces (of 4 possible types) when superimposing both structures. The top row shows the associated $4$-GS angular orientation. 
}
\label{fig:corresp_GS_transversal}
\end{figure}




We now discuss other incarnations of $d$-GS structures for triangulations of the square, and  compare them to the known incarnations of  transversal structures  \cite{Fu07b}. We start with the incarnation in terms of $4$-GS woods. Let $\cT$ be a transversal structure, let  $\cAL=\al(\cT)$ be the corresponding arc labeling, and les  $\cW=(W_1,W_2,W_3,W_4)$ be the 4-GS wood associated to $\cAL$. By  Remark~\ref{rk:bipolarBi}, the tree $W_i$ (after changing its root from  $v_i$ to $v_{i-1}$) is obtained from $\cT$ as follows: the tree $W_1$ (resp. $W_3$) is the leftmost outgoing tree (resp. rightmost ingoing tree) of the blue bipolar orientation, while the tree $W_4$ (resp. $W_2$) is the leftmost outgoing tree (resp. rightmost ingoing tree) of the red bipolar orientation. 

Next, we consider the $4$-GS angular orientations for triangulations of the square. It was shown in~\cite{Fu07b} that transversal structures correspond to orientations of star edges in $G^+$, where $v_i$ has outdegree $2$ (resp. $0$) for $i\in\{1,3\}$ (resp. $i\in\{2,4\}$), original inner vertices have outdegree $4$, and star vertices have outdegree $1$. Letting $s_i$ be the star vertex in the inner face containing the outer edge $(v_{i-1},v_i)$, these coincide with the $4$-GS angular orientations for that case, upon returning the edges $(v_2,s_2), (v_1,s_2), (v_4,s_4), (v_3,s_4)$. Moreover, as shown in the top-part of Figure~\ref{fig:corresp_GS_transversal}, the correspondence in~\cite{Fu07b} commutes with our correspondence from $4$-GS arc labelings to $4$-GS angular orientations.

%




\section{Bipartite case and connection to known structures}\label{sec:bipartite}

In this section we study a class of structures that can be defined on bipartite $d$-maps, when $d=2b$ is an even integer. We will call such structures \emph{$b$-bipartite-grand-Schnyder structures}, or \emph{$b$-BGS structures} for short. These structures 
can be identified with a subclass of $2b$-GS structures called \emph{even} (even $2b$-GS structures form a proper, non-empty, subclass of $2b$-GS structures for bipartite $2b$-maps). As we will see, specific classes of $b$-BGS structures can be identified with previously known structures.

We start this section by giving the four incarnations of $b$-BGS structures and then state their equivalence. 
In the second part we will state the existence condition for $b$-BGS structures. 
In the third part we explain the connection between $b$-BGS structures and previously known structures.

\subsection{Incarnations of bipartite grand-Schnyder structures}\label{sec:incar_bipartite}\hfill\\
In this section, we fix an integer $b\geq 2$ and a bipartite $2b$-map $G$. 
We also fix the bicoloring of the vertices of $G$ in black and white by requiring the outer vertex $v_1$ to be black (and hence the outer vertex $v_i$ is black if and only if $i$ is odd). 

We now define 4 incarnations of $b$-BGS structures. They are represented in Figure~\ref{fig:bipartite_compiled2}. Our first incarnation is in terms of corner labelings.
\fig{width=\linewidth}{bipartite_compiled2}{Four incarnations of a 3-bipartite-grand-Schnyder structure.}

\begin{definition}\label{def:BGS-labeling}
A \emph{$b$-bipartite-grand-Schnyder corner labeling}, or \emph{$b$-BGS labeling}, of $G$ is a $2b$-GS labeling of $G$ such that the corners incident to black vertices have odd labels, while the corners incident to white vertices have even labels.
\end{definition}


The parity condition is equivalent to requiring the \emph{label jumps} (as defined in Section~\ref{sec:incarnations}) between consecutive corners in clockwise order around a vertex to be even, and the label jumps between consecutive corners in clockwise order around an inner face to be odd. This observation underlies the other incarnations of BGS structures.

First, recall the mapping $\Phi$ defined in Section~\ref{sec:statements} between corner labelings and marked orientations. Observe that by specializing $\Phi$, the set of $b$-BGS labelings is in bijection with the subclass of $2b$-GS marked orientations of $G$ such that the weight of each inner arc is even, and the number of marks in each inner corner is even. Let us call this subclass the \emph{even $2b$-GS marked orientations} of $G$. For an even $2b$-GS marked orientation, we can divide the weight of each arc and the number of marks in each corner by 2 without loss of information. This leads to the following definition.

\begin{definition}\label{def:BGS-marked}
A \emph{$b$-bipartite-grand-Schnyder marked orientation}, or \emph{$b$-BGS marked orientation}, of $G$ is a weighted orientation of $G$ together with a corner marking satisfying the following conditions. 
\begin{itemize}
  \item[(BM0)] The weight of every outer arc is 0. For any inner arc $a$ whose initial vertex is an outer vertex $v_i$, the weight of $a$ and the number of marks in the corner of $v_i$ on the left of $a$ are both equal to $b-\mathrm{deg}(f)/2$, where $f$ is the face on the left of $a$.
  \item[(BM1)] For any inner face $f$, the total number of marks in the corners of $f$ is $b - \mathrm{deg}(f)/2$.
  \item[(BM2)] The weight of every inner edge is $b-1$, and the outgoing weight of every inner vertex $v$ is $b+m$, where $m$ is the number of marks in the corners incident to $v$.
  \item[(BM3)] The weight of every inner arc $a$ is at least $b-\mathrm{deg}(f)/2$, where $f$ is the face on the left of $a$.
\end{itemize}
\end{definition}


It is clear that $b$-BGS marked orientations are in bijection with even $2b$-GS marked orientations. Hence, from the above discussion we get:

\begin{lem}\label{lem:bij-BGS-beta}
The set $\bBL_G$ of $b$-BGS labelings of $G$ and the set $\bBM_G$ of $b$-BGS marked orientations of $G$ are in bijection. 
\end{lem}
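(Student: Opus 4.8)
The plan is to obtain the bijection $\bBL_G\simeq\bBM_G$ as a composition of two bijections, each of which has essentially been described in the discussion preceding the statement; the work consists in verifying that both are well-defined on the relevant classes.

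First I would recall the bijection $\Phi$ from Section~\ref{sec:statements} between $2b$-GS corner labelings and $2b$-GS marked orientations of $G$, and check that it restricts to a bijection between $\bBL_G$ and the class of even $2b$-GS marked orientations of $G$. By Definition~\ref{def:BGS-labeling} together with the reformulation noted just after it, a $2b$-GS labeling lies in $\bBL_G$ exactly when every label jump between consecutive corners around a vertex is even and every label jump between consecutive corners around an inner face is odd. The key step is then to read off from the definition of $\Phi$ how label jumps are recorded in the image: the weight of an arc $a$ is governed by the label jumps on the face to the left of $a$, and the number of marks in a corner is governed by the label jump at the incident vertex. Tracing this through shows that "jumps even around every vertex" is equivalent to "the number of marks in every inner corner is even", while "jumps odd around every inner face" is equivalent to "the weight of every inner arc is even". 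Since $\Phi$ is already known to be a bijection and the constraints defining the two subclasses correspond to one another under it, the restriction $\Phi|_{\bBL_G}$ is a bijection onto the set of even $2b$-GS marked orientations.

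Second I would consider the halving map that divides the weight of every arc and the number of marks in every corner by $2$. On an even $2b$-GS marked orientation every such quantity is even: outer arcs have weight $0$, inner arcs and inner corners are even by hypothesis, and the remaining quantities are forced to be even by the $2b$-GS axioms together with the fact that $\mathrm{deg}(f)$ is even for every face of the bipartite $2b$-map $G$. So the map is well-defined and loses no information, and it remains to check that it carries the defining conditions of even $2b$-GS marked orientations exactly onto (BM0)--(BM3): each numerical condition in the definition of $2b$-GS marked orientation becomes, after division by $2$, the corresponding condition above, using $2b-\mathrm{deg}(f)=2\bigl(b-\mathrm{deg}(f)/2\bigr)$ to match the constants and noting that the outgoing-weight axiom produces (BM2) with $m$ equal to half the number of marks at the vertex. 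Conversely, doubling all weights and mark counts of a $b$-BGS marked orientation manifestly produces an even $2b$-GS marked orientation, so the halving map is a bijection onto $\bBM_G$. Composing with $\Phi|_{\bBL_G}$ yields the desired bijection.

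The only genuine difficulty is bookkeeping: one must make sure that the precise description of $\Phi$ from Section~\ref{sec:statements} converts the parity of corner labels into the two evenness conditions with no off-by-one discrepancy, and that every numerical quantity appearing in the $2b$-GS axioms is in fact even on a bipartite $2b$-map so that halving is lossless. Both points reduce to the observation that all face degrees are even, after which the matching of conditions is routine.
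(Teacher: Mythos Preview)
Your proposal is correct and follows exactly the approach of the paper: first restrict the bijection $\Phi$ to obtain a bijection between $\bBL_G$ and the even $2b$-GS marked orientations, then halve all weights and mark counts to land in $\bBM_G$. The paper presents this argument as essentially self-evident from the discussion preceding the lemma; you have simply supplied the bookkeeping the paper omits.
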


The next incarnation is in terms of angular orientations. Recall the bijection $\Psi$ between marked orientations and angular orientations. Under $\Psi$, the even $2b$-GS marked orientations of $G$ correspond to the $2b$-GS angular orientations of $G$ such that the weight of every arc of the angular map $G^+$ is even. We shall call this subclass of $2b$-GS angular orientations \emph{even}. As before, we can divide every weight by 2, which leads to the following definition.

\begin{definition}\label{def:BGS-angular}
A \emph{$b$-bipartite-grand-Schnyder angular orientation}, or \emph{$b$-BGS angular orientation}, of $G$ is a weighted orientation of the angular map $G^+$ satisfying the following conditions: 
\begin{itemize}
  \item[(BA0)] The weights of outer arcs are 0. Any inner arc $a$ of $G^+$ whose initial vertex is an outer vertex $v_i$ has weight 0 unless $a$ is the arc preceding the outer edge ($v_i,v_{i-1}$) in clockwise order around $v_i$. 
  \item[(BA1)] The outgoing weight of any star vertex $v_f$ is $b-\mathrm{deg}(f)/2$, and the weight of every star edge incident to $v_f$ is $b-\mathrm{deg}(f)/2$.
  \item[(BA2)] The outgoing weight of any inner original vertex is $b$. The weight of any original inner edge is $\mathrm{deg}(f)/2+\mathrm{deg}(f')/2-b-1$, where $f,f'$ are the faces of $G$ incident to $e$.
\end{itemize}
\end{definition}

It is clear that $b$-BGS angular orientations are in bijection with even $2b$-GS angular orientations. Hence, from the above discussion we get:

\begin{lem}\label{lem:bij-BGS-gamma}
The set $\bBM_G$ of $b$-BGS marked orientations of $G$ and the set $\bBA_G$ of $b$-BGS angular orientations of $G$ are in bijection.
\end{lem}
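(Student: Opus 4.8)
The plan is to obtain the bijection as a composition of three bijections, two of which are the ``halving'' operations already discussed and the third of which is (a restriction of) the map $\Psi$.

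First I would record the two halving bijections. By the discussion preceding Definition~\ref{def:BGS-marked}, the operation that doubles the weight of every arc and the number of marks in every corner is a bijection from $\bBM_G$ onto the set of \emph{even} $2b$-GS marked orientations of $G$, with inverse the halving map. Symmetrically, by the discussion preceding Definition~\ref{def:BGS-angular}, doubling the weight of every arc of $G^+$ is a bijection from $\bBA_G$ onto the set of \emph{even} $2b$-GS angular orientations of $G$. Hence it suffices to produce a bijection between even $2b$-GS marked orientations and even $2b$-GS angular orientations.

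For that step I would invoke $\Psi$: since $\Psi$ is a bijection between marked orientations and angular orientations that restricts to a bijection between $2b$-GS marked orientations and $2b$-GS angular orientations of $G$, it is enough to check that it identifies the two even subclasses, which is exactly the correspondence recorded in the paragraph preceding Definition~\ref{def:BGS-angular} and may be cited directly. If one wanted this step self-contained, one would reread the definition of $\Psi$ and check that each arc weight of $G^+$ is, under $\Psi$, an explicit integer combination of the arc weights and corner mark counts of $G$ and of the (even) face degrees, in which the arc weights and mark counts of $G$ occur with coefficient $\pm 1$; reducing modulo $2$ then shows that all $G^+$-arc weights are even if and only if all relevant inner arc weights and inner corner mark counts of $G$ are even. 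Composing the three bijections gives $\bBM_G\cong\bBA_G$.

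The hard part is this middle step---and within it, the direction showing that evenness of every $G^+$-arc weight forces evenness of the \emph{individual} arc weights and mark counts of $G$, not merely of certain combinations of them; here one uses that the weights of the star edges around a star vertex $v_f$ recover the mark counts of the corners of $f$ one by one, and that the parity shifts produced by the $\mathrm{deg}(f)/2$ terms cancel. Once that is in hand, the remainder is a formal composition of bijections already available in the text; conjugating $\Psi$ by the two halving maps even yields an explicit description of the bijection $\bBM_G\to\bBA_G$, under which conditions (BM0)--(BM3) go over term by term into (BA0)--(BA2).
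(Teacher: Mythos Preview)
Your proposal is correct and follows exactly the paper's approach: the paper does not give a separate proof of this lemma but simply records it as a consequence of the preceding discussion, which is precisely the composition you describe (doubling $\bBM_G$ to even $2b$-GS marked orientations, applying $\Psi$, then halving back to $\bBA_G$). The only difference is that the paper asserts without argument that $\Psi$ identifies the two ``even'' subclasses, whereas you sketch the parity check; your elaboration is correct and not in conflict with anything in the text.
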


\begin{remark} \label{rk:weight-frozen-bip}
Note that Conditions (BA0) and (BA1) together imply that the inner arc $a$ whose initial vertex is an outer vertex $v_i$ and is pointing toward the star vertex $v_{f_i}$, where $f_i$ is the inner face incident to the outer edge $\{v_i,v_{i-1}\}$, has weight $b-\mathrm{deg}(f_i)/2$.
\end{remark}

Now we give the final incarnation of bipartite $2b$-GS structures as a $b$-tuple of trees. Let us recall the bijection $\Theta$ defined in Section~\ref{sec:statements} between GS corner labelings and GS woods. Observe that under $\Theta$ the set of $b$-BGS labelings of $G$ is in bijection with the subclass of $2b$-GS woods satisfying the following condition: 
\begin{itemize}
\item[($\dagger$)] \emph{For every $i \in \{1,\ldots,b\}$, and each black (resp. white) inner vertex $v$, the incident outgoing arcs in $W_{2i}$ and in $W_{2i-1}$ (resp. in $W_{2i}$ and $W_{2i+1}$) are the same.}
\end{itemize}

%
 
 Let us call this subclass the \emph{even $2b$-GS woods}. Note that Condition ($\dagger$) implies that there are redundancies in considering both the odd and the even colors. Focusing on the even colors leads to the following definition.

\begin{definition}\label{def:BGS-woods}
A \emph{$b$-bipartite-grand-Schnyder wood}, or \emph{$b$-BGS wood}, of $G$ is a $b$-tuple $(W_1',\ldots,W_b')$ of subsets of arcs satisfying:\begin{itemize}
  \item[(BW0)] For all $i \in [b]$, every vertex $v \neq v_{2i}$ has exactly one outgoing arc in $W_i'$, while $v_{2i}$ has no outgoing arc in $W_i'$. For $k \neq 2i$, the arc in $W_i'$ going out of the outer vertex $v_k$ is the outer arc oriented from $v_k$ to $v_{k+1}$. Lastly, $W_i'$ does not contain any inner arc oriented toward $v_{2i}$ or $v_{2i+1}$.
  \item[(BW1)] For every inner vertex $v$, the incident outgoing arcs $a_1',\ldots,a_b'$ in $W_1',\ldots,W_b'$ are not all the same, and they appear in clockwise order around $v$.
  \item[(BW2)] Let $v$ be a black (resp. white) vertex with outgoing arcs $a_1',\ldots,a_b'$ in $W_1',\ldots,W_b'$ respectively (for $v = v_{2i}$ we adopt the convention $a_i' = (v_{2i}, v_{2i-1})$). Let $a$ be an inner arc oriented toward $v$, let $f$ be the face on its right, let $\epsilon$ be the number of sets in $W_1',...,W_b'$ containing the opposite arc $-a$ and let $m = \max(0,b-\deg(f)/2-\eps)$.

 If $a$ belongs to $W_i'$, then $a$ appears strictly between $a_{i+1+m}'$ and $a_i'$ (resp. $a_{i+m}'$ and $a_{i-1}'$) in clockwise order around $v$. 
  
 If $a$ belongs to none of the sets $W_1',\ldots,W_b'$ but is between the outgoing arcs in $W_i'$ and $W_{i+1}'$ in clockwise order around the initial vertex of $a$, then $a$ appears strictly between $a_{i+1+m}'$ and $a_{i+1}'$ (resp. $a_{i+m}'$ and $a_i'$) in clockwise order around $v$. If $m=0$ this condition means that $-a\neq a_{i+1}'$ (resp. $-a\neq a_{i}'$).
\end{itemize}
\end{definition}

\fig{width=\linewidth}{def-bip-woods}{Condition (BW2)  of $b$-BGS woods, where $m := \max(0,b-\deg(f)/2-\eps)$.}

Conditions (BW2)  are represented in Figure~\ref{fig:def-bip-woods}. We now mention a few facts about $b$-BGS woods, but omit their proofs because they are either easy or similar to the one provided for ordinary GS woods in Section~\ref{sec:remaining-proofs}.

\begin{remark}\label{rem:reduced-wood}\hfill
\begin{compactitem}
  \item For each $i \in [b]$, $W_i'$ is a spanning tree of $G$ oriented toward its root $v_{2i}$.
  \item  Similar to the case of ordinary GS woods, if $G$ is $2b$-adapted, then the last statement in Conditions (BW2) (about arcs which are in none of the trees $W_1',\ldots,W_b'$) can be dropped because it is redundant with the other conditions.
\end{compactitem}
\end{remark}

Next, we claim that $b$-BGS woods are in bijection with even $2b$-GS woods. This is not obvious and will be elaborated in the following lemma.

\begin{lemma}\label{lem:reduced-wood}
Let $G$ be a bipartite $2b$-map. Let $\cW=(W_1,...,W_{2b})$ be an even $2b$-GS wood and define the mapping $\lambda$ as $\lambda(W_1,...,W_{2b}) = (W_2,...,W_{2b}) = (W_1',...,W_b')$. Then $\lambda$ is a bijection between even $2b$-GS woods and $b$-BGS woods.
As a result, the set $\bBL_G$ of $b$-BGS labelings of $G$ and the set $\bBW_G$ of $b$-BGS woods of $G$ are in bijection.
\end{lemma}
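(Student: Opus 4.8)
\textbf{Overall strategy.} The statement has two parts: first, that $\lambda$ defined by forgetting the odd-colored trees is a bijection between even $2b$-GS woods and $b$-BGS woods; second, the resulting bijection between $\bBL_G$ and $\bBW_G$. The second part is immediate once the first is established: we already know from $\Theta$ (restricted to even $2b$-GS labelings) that $\bBL_G$ is in bijection with the set of even $2b$-GS woods, so composing with $\lambda$ gives the desired bijection. Thus the entire work is in proving that $\lambda$ is well-defined (lands in $\bBW_G$), injective, and surjective.

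\textbf{Well-definedness.} Here I would start from an even $2b$-GS wood $\cW = (W_1,\dots,W_{2b})$ and verify that $(W_2,W_4,\dots,W_{2b})$ satisfies (BW0)--(BW2). Condition ($\dagger$) is the key input: it says that at each black inner vertex the outgoing arcs in $W_{2i-1}$ and $W_{2i}$ coincide, and at each white inner vertex the outgoing arcs in $W_{2i}$ and $W_{2i+1}$ coincide. This means that the even trees already ``remember'' the full cyclic pattern of outgoing arcs, since each odd outgoing arc equals a neighbouring even one. Condition (BW0) for $W_i' = W_{2i}$ follows directly from the corresponding GS wood axioms for $W_{2i}$ (plus the fact, derivable from ($\dagger$) applied at the level of arcs pointing into $v_{2i}$ or $v_{2i+1}$, that such arcs are excluded). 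Condition (BW1) — that the $b$ even outgoing arcs are not all equal and appear in clockwise order — follows from the clockwise order of all $2b$ outgoing arcs in a GS wood, together with the observation that if all even ones coincided then by ($\dagger$) all $2b$ would coincide, contradicting the GS ``not all equal'' axiom. Condition (BW2) is the one requiring real computation: one must translate the GS wood condition governing where an incoming arc $a \in W_j$ sits among the $2b$ outgoing arcs at its head into a condition on the $b$ even outgoing arcs. The shift by $m = \max(0, b - \deg(f)/2 - \eps)$ should arise precisely from halving the analogous shift in the $2b$-GS wood condition, using that $G$ bipartite forces $\deg(f)$ even and the even/odd weight parities match up; here $\eps$ (the number of even trees containing $-a$) plays the role of half the count in the $2b$ setting by ($\dagger$). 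I expect this to be the main obstacle: carefully matching the two intervals, handling the boundary case $m=0$, and dealing separately with arcs in some $W_i'$ versus arcs in none of them.

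\textbf{Injectivity.} This is easy: $\lambda$ simply deletes coordinates, so to show injectivity it suffices to recover the odd trees $W_{2i-1}$ from the even ones. The outgoing arc of each inner vertex in $W_{2i-1}$ is forced by ($\dagger$) (it equals the outgoing arc in $W_{2i}$ at black vertices and in $W_{2i-2}$... — more precisely, ($\dagger$) pins down every outgoing arc of $W_{2i-1}$ in terms of outgoing arcs of even trees). Since by Remark~\ref{rem:reduced-wood} each $W_i'$ is a spanning tree, and more generally each $W_j$ in a GS wood is determined by its collection of outgoing arcs (one per non-root vertex), the odd trees are uniquely reconstructed, hence $\lambda$ is injective.

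\textbf{Surjectivity.} Given a $b$-BGS wood $(W_1',\dots,W_b')$, I would define candidate odd trees $W_{2i-1}$ via the formula dictated by ($\dagger$) (outgoing arc of $v$ in $W_{2i-1}$ = outgoing arc of $v$ in $W_{2i}$ if $v$ black, $=$ outgoing arc of $v$ in $W_{2i-2}$ if $v$ white, with the appropriate conventions at outer vertices), set $W_{2i} = W_i'$, and check that the resulting $2b$-tuple $\cW$ is an even $2b$-GS wood. That it satisfies ($\dagger$) is by construction; that each $W_{2i-1}$ is a valid oriented spanning tree follows because at each vertex it has exactly one outgoing arc and one shows acyclicity/connectedness as for the even trees (or invokes the reconstruction via $\Theta$ and labelings). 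The GS wood axioms (the clockwise order of all $2b$ outgoing arcs, and the placement of incoming arcs) must then be derived from (BW1)--(BW2): clockwise order of the $2b$ arcs follows from (BW1) plus the fact that each odd arc is sandwiched next to its even partner, and the incoming-arc condition is the ``inverse'' of the computation done in the well-definedness step — one checks that the $b$-BGS interval condition with shift $m$, when the odd outgoing arcs are inserted as prescribed, yields exactly the $2b$-GS interval condition. Equivalently, and perhaps more cleanly, one can observe that $\lambda$ is a map between two finite sets each of which is in bijection (via $\Theta$, resp. via the composite just constructed) with $\bBL_G$ in a way compatible with $\lambda$, so that well-definedness plus injectivity already force bijectivity by cardinality; I would use this shortcut to avoid re-deriving surjectivity from scratch, provided the compatibility with $\Theta$ is spelled out. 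Finally, the concluding sentence of the lemma follows by composing the bijection $\Theta\colon \bBL_G \to \{\text{even } 2b\text{-GS woods}\}$ with $\lambda$.
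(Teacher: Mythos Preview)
Your approach is essentially the same as the paper's: check that $\lambda$ lands in $b$-BGS woods by deriving (BW0)--(BW2) from (W0)--(W3) together with ($\dagger$); prove injectivity by recovering the odd trees from the even ones via the rule forced by ($\dagger$); and prove surjectivity by applying that recovery rule to an arbitrary $b$-BGS wood and checking the result is an even $2b$-GS wood. The paper does exactly this, with the (BW2) verification carried out explicitly for a black head vertex and the white case declared analogous. One small point you leave vague but the paper spells out: the recovery rule must also specify the odd colors on the \emph{outer} arcs, which are not determined by ($\dagger$) (which speaks only of inner vertices); the paper fixes this by decreeing which outer arcs carry which odd colors.

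One genuine issue: your proposed cardinality shortcut for surjectivity is circular. You suggest that both the set of even $2b$-GS woods and $\bBW_G$ are already known to be in bijection with $\bBL_G$, so injectivity of $\lambda$ between finite sets of equal size would finish. But at this point in the argument there is \emph{no} independent bijection between $\bBW_G$ and $\bBL_G$; establishing one is precisely the content of the lemma, and the only candidate is $\Theta$ composed with $\lambda$. So the shortcut assumes what is to be proved. You should drop it and rely on the direct surjectivity argument (which you also outline, and which is what the paper does).
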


\begin{proof}
 First we show that $(W_1',...,W_b') = \lambda(W_1,...,W_{2b})$ is indeed a $b$-BGS wood. Conditions (BW0) and (BW1) are immediate from Conditions (W0) and (W1) of ordinary $2b$-GS woods.
Condition (BW2) can also be easily deduced from Conditions (W2), (W3) and the evenness of $(W_1,...,W_{2b})$ as follows.

Let $a = (u,v)$ be an inner arc of $G$. We consider the case where $v$ is black.
Let $f$ be the face on the right of $a$, and let $2\eps$ be number of sets in $(W_1,\ldots,W_{2b})$ containing the opposite arc $-a$. Let us first assume  $a$ is in $W_i'$. By definition, $a$ is in $W_{2i}$, and since $u$ is white, $a$ is also in $W_{2i+1}$. By applying Conditions (W2) and (W3) to the color $2i$ and $2i+1$, we conclude that $a$ appears strictly between the outgoing arcs in $W_{2i+2k+2}$ and $W_{2i}$ in clockwise order around $v$, where $k = \max(0, 2b-\deg(f)-2\eps)$. This translates to $a$ being strictly between $a_{i+1+m}'$ and $a_i'$, where $m = k/2$. 

Now, suppose $a$ is not in any of the sets $W_1',...,W_b'$, but between the outgoing arcs  $W_i'$ and $W_{i+1}'$ in clockwise order around $u$. Since $u$ is white, $a$ is between the outgoing arcs in $W_{2i+1}$ and $W_{2i+2}$ in clockwise order around $u$.
The last statement of (W3) implies that $a$ appears strictly between the outgoing arcs in $W_{2i+2k+2}$ and $W_{2i+1}$ in clockwise order around $v$, where $k = \max(0, 2b-\deg(f)-2\eps)$. This translates into $a$ being strictly between $a_{i+1+m}'$ and $a_{i+1}'$. The case where $v$ is white is similar.

Next we prove that $\lambda$ is a bijection. Injectivity follows directly from definition~\ref{def:BGS-woods} as the odd colors $(W_1,W_3,...,W_{2b-1})$ can be recovered from the even ones: the outgoing edge of $W_{i}' = W_{2i}$ at a black (resp. white) inner vertex now also belongs to $W_{2i-1}$ (resp. $W_{2i+1}$). The assignment of odd colors to outer edges is different: for each $i \in [b]$, we simply force that the outer arc $(v_{2i},v_{2i+1})$ to have all the odd colors, and the outer arc $(v_{2i-1},v_{2i})$ to have all the odd colors except for color $2i-1$.

To prove surjectivity, we need to show for any $b$-tuple $\cW'=(W_1',...,W_b')$ satisfying Conditions (BW0-BW2), the tuple $\cW=(W_1,W_2,...,W_{2b})$ obtained by the recovery rule outlined above is an even $2b$-GS wood. As before, it is easy to check that Conditions (BW0-BW2) for $\cW'$ imply Conditions (W0-W3) for $\cW$.
Hence $\cW$ is a $2b$-GS wood. Moreover it is clear from the definition that $\cW$ is even, which complete the proof of the surjectivity of $\lambda$.

The argument above immediately implies that the set $\bBL_G$ of $b$-BGS labelings of $G$ and the set $\bBW_G$ of $b$-BGS woods of $G$ are in bijection.  
\end{proof}




We conclude this subsection with following Theorem, which summarizes Lemma \ref{lem:bij-BGS-beta},  Lemma \ref{lem:bij-BGS-gamma} and Lemma \ref{lem:reduced-wood}:

\begin{thm}\label{thm:bij-BGS}
Given a bipartite $2b$-map $G$, the sets $\bBL_G$ of $b$-BGS labelings, $\bBW_G$ of $b$-BGS woods, $\bBM_G$ of $b$-BGS marked orientations and $\bBA_G$ of $b$-BGS angular orientations are in bijections.
\end{thm}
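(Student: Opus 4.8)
The plan is to simply assemble the three bijections already established in this subsection, observing that they chain together through the common pivot object $\bBL_G$. Concretely, Lemma~\ref{lem:bij-BGS-beta} gives a bijection $\bBL_G \leftrightarrow \bBM_G$, Lemma~\ref{lem:bij-BGS-gamma} gives a bijection $\bBM_G \leftrightarrow \bBA_G$, and Lemma~\ref{lem:reduced-wood} gives a bijection $\bBL_G \leftrightarrow \bBW_G$. Composing these, we obtain bijections between any two of the four sets; for instance $\bBL_G \leftrightarrow \bBA_G$ is the composite of the first two, and $\bBW_G \leftrightarrow \bBM_G$ is $\lambda^{-1}$ followed by the restriction of $\Phi$.

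First I would recall that each of the three lemmas was obtained by specializing one of the global bijections $\Phi$ (corner labelings $\leftrightarrow$ marked orientations), $\Psi$ (marked orientations $\leftrightarrow$ angular orientations), and $\Theta$ (GS corner labelings $\leftrightarrow$ GS woods) to the ``even'' subclasses, and then dividing all weights and mark-counts by $2$ — a step that loses no information precisely because evenness guarantees divisibility. So the maps in play are honest restrictions (followed by the harmless rescaling) of maps already known to be bijective on the larger $2b$-GS classes, hence bijective onto their images, and the content of the three lemmas is exactly the identification of those images with $\bBM_G$, $\bBA_G$, and $\bBW_G$ respectively. Chaining restrictions of bijections yields bijections, so nothing further is required.

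There is essentially no obstacle here: the theorem is a bookkeeping statement and its proof is a one-line composition argument. The only point worth stating explicitly is that the three lemmas all share $\bBL_G$ (or a set in bijection with it) as an endpoint, so that the four sets form a connected ``diagram'' and any pair is linked by a composite of the given bijections; one should also note that these composites are mutually compatible (the diagram commutes), which is automatic since every arrow is a genuine restriction of one of $\Phi,\Psi,\Theta$ or of $\lambda$, all of which are globally defined. I would therefore write the proof as: by Lemma~\ref{lem:bij-BGS-beta} and Lemma~\ref{lem:bij-BGS-gamma} the sets $\bBL_G$, $\bBM_G$, $\bBA_G$ are pairwise in bijection, and by Lemma~\ref{lem:reduced-wood} the set $\bBW_G$ is in bijection with $\bBL_G$; composing, all four sets are pairwise in bijection.

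\begin{proof}
This is an immediate consequence of the three preceding lemmas. By Lemma~\ref{lem:bij-BGS-beta} the set $\bBL_G$ is in bijection with $\bBM_G$, by Lemma~\ref{lem:bij-BGS-gamma} the set $\bBM_G$ is in bijection with $\bBA_G$, and by Lemma~\ref{lem:reduced-wood} the set $\bBL_G$ is in bijection with $\bBW_G$. Since all four sets are thus linked to $\bBL_G$ by a bijection (directly, or through $\bBM_G$), composing these bijections shows that the four sets $\bBL_G$, $\bBW_G$, $\bBM_G$, $\bBA_G$ are pairwise in bijection. Moreover these composites are mutually compatible, since each of the bijections above is the restriction (followed by the division of all weights and mark-counts by $2$) of one of the globally defined maps $\Phi$, $\Psi$, $\Theta$, or of the map $\lambda$.
\end{proof}
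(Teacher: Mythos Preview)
Your proof is correct and matches the paper's approach exactly: the paper does not even write out a proof for this theorem, stating only that it ``summarizes Lemma~\ref{lem:bij-BGS-beta}, Lemma~\ref{lem:bij-BGS-gamma} and Lemma~\ref{lem:reduced-wood}.'' Your composition argument is precisely that summary made explicit.
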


\subsection{Existence Result}
In this subsection, we state the existence theorem for $b$-BGS structures, and show that it is a consequence of Theorem~\ref{thm:main} for non-bipartite GS structures.

\begin{thm}\label{thm:BGS-main}
Let $b \geq 2$, and let $G$ be a bipartite $2b$-map. There exists a $b$-BGS wood (resp. labeling, marked orientation, angular orientation) for $G$ if and only if $G$ is $2b$-adapted. 

Moreover for any fixed $b$, there is an algorithm which takes as input a $2b$-adapted map, and outputs a $b$-BGS wood (resp. labeling, marked orientation, angular orientation) in linear time in the number of vertices. 
\end{thm}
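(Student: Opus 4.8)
The plan is to deduce both directions from Theorem~\ref{thm:main} together with the four‑fold equivalence of Theorem~\ref{thm:bij-BGS}. Since that theorem puts $\bBL_G$, $\bBW_G$, $\bBM_G$ and $\bBA_G$ in bijection, it suffices to prove the existence statement for a single incarnation, say for $b$-BGS labelings — equivalently, via Lemma~\ref{lem:reduced-wood} and the discussion preceding Definition~\ref{def:BGS-woods}, for \emph{even} $2b$-GS woods. The ``only if'' direction is then immediate: by Definition~\ref{def:BGS-labeling} a $b$-BGS labeling is in particular a $2b$-GS labeling, so $\bBL_G \neq \emptyset$ forces $G$ to carry a $2b$-GS structure, whence $G$ is $2b$-adapted by Theorem~\ref{thm:main} applied to $G$ viewed as a $2b$-map. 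The real content is the converse, together with the algorithmic refinement.

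For the ``if'' direction, assume $G$ is bipartite and $2b$-adapted. By Theorem~\ref{thm:main}, and the lattice-theoretic development accompanying it, the set of $2b$-GS structures of $G$ is a non-empty distributive lattice; in particular it has a canonical minimal element $\cW_{\min}$, and we may take the linear-time algorithm of Theorem~\ref{thm:main} to output this element (or, failing that, to output a $2b$-GS structure whose evenness we verify directly). The crux of the proof is the following claim: \emph{if $G$ is bipartite, then $\cW_{\min}$ is an even $2b$-GS wood}, i.e.\ it satisfies Condition~($\dagger$). Granting the claim, $\lambda(\cW_{\min})$ is a $b$-BGS wood, which proves existence; and composing the algorithm of Theorem~\ref{thm:main} with $\lambda$ and with the bijections $\Theta,\Phi,\Psi$ of Section~\ref{sec:statements} — all of which are local, hence computable in linear time once $b$ is fixed — produces a $b$-BGS wood, labeling, marked orientation and angular orientation in linear time, as required.

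The proof of the claim is the step I expect to be the main obstacle. I would argue it locally, using the characterization of $\cW_{\min}$ as the unique $2b$-GS wood admitting no counterclockwise flip. Suppose for contradiction that Condition~($\dagger$) fails at some inner vertex $v$, say $v$ is black and the outgoing arcs of $v$ in $W_{2i}$ and in $W_{2i-1}$ differ. Because $G$ is bipartite, every inner face has even degree — so each quantity $b-\deg(f)/2$ occurring in Conditions (W0)--(W3) is an integer — and the $2$-coloring of the vertices is globally compatible with the labels of the outer vertices; exploiting this, one should be able to reorganize the arcs in the angular sector of $v$ delimited by these two outgoing arcs so as to realize a counterclockwise flip, contradicting the minimality of $\cW_{\min}$. (A possible alternative is to run the explicit construction underlying Theorem~\ref{thm:main} and check by induction that, on a bipartite input, the partial corner labeling it builds always has the parity required by Definition~\ref{def:BGS-labeling}.) Either way, the essential difficulty is to reconcile the canonical $2b$-GS structure with the bipartite parity constraints, which is genuinely something to prove since even $2b$-GS structures form only a \emph{proper} subclass of all $2b$-GS structures. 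Once the claim is established, the theorem follows as described, the linear-time bound being inherited from Theorem~\ref{thm:main} and from the locality of $\lambda,\Theta,\Phi,\Psi$.
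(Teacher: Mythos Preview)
Your ``only if'' direction matches the paper's and is fine. The ``if'' direction, however, rests entirely on the claim that the lattice-minimal $2b$-GS wood $\cW_{\min}$ is automatically even, and this claim is not proved: you acknowledge it as ``the main obstacle'' and offer only a sketch (``one should be able to reorganize the arcs \ldots\ so as to realize a counterclockwise flip''). That sketch is not an argument --- you have not explained why a failure of~($\dagger$) at a black vertex produces a \emph{counterclockwise} (as opposed to clockwise) flip, nor why the flip stays within the class of $2b$-GS woods. The alternative you mention (tracking parity through the explicit construction behind Theorem~\ref{thm:main}) is likewise left as a suggestion. As written, the proof is incomplete at precisely the point where the bipartite hypothesis has to do work.

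The paper avoids this difficulty by switching incarnations: it works with \emph{angular orientations} rather than woods, and instead of invoking lattice minimality it gives an elementary parity-chasing argument. Starting from any $2b$-GS angular orientation $\cA$ (produced by Theorem~\ref{thm:main}), bipartiteness forces every face degree to be even, hence every vertex of $G^+$ has even outgoing weight and every edge has even total weight. Consequently the arcs of odd weight in $\cA$ support a directed walk that never reaches the outer boundary, so they contain a directed cycle; pushing one unit of weight around that cycle keeps $\cA$ a $2b$-GS angular orientation while strictly decreasing the number of odd arcs. Iterating yields an even $2b$-GS angular orientation in linear time. This is both shorter and more robust than the minimality route: it does not require knowing that the algorithm of Theorem~\ref{thm:main} outputs an extremal element, nor any structural result about the sublattice of even structures.
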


\begin{proof}
We assume Theorem~\ref{thm:main}, which will be proved in Section~\ref{sec:proof-existence}. Note that $b$-BGS woods can be identified as a subclass of $2b$-GS wood (which is especially clear from the $b$-BGS labeling incarnation), so the necessity of $2b$-adaptedness is clear from Theorem~\ref{thm:main}. 

It remains to prove the sufficiency of $2b$-adaptedness. Let $G$ be a $2b$-adapted bipartite map. By Theorem~\ref{thm:bij-BGS} the different incarnations of $b$-BGS structures are in bijection. Hence it suffices to prove the existence of a $b$-BGS angular orientation. In turn this reduces to proving the existence of an even $2b$-GS angular orientation.


By Theorem~\ref{thm:main}, $G$ admits a $2b$-GS angular orientation $\cA$. Let us call \emph{odd} the arcs of $G^+$ having an odd weight in $\cA$. If there is no odd arc, then $\cA$ is even and we are done. Otherwise, let us explain how to produce an angular orientation $\cA'$ with fewer odd arcs. 
Since $G$ is bipartite, the degree of every face is even. Hence the outgoing weight of every vertex of $G^+$ is even and the total weight of every edge of $G^+$ is even. Now, suppose that the arc $a_1 = (u_0, u_1)$ is odd, then the opposite arc $-a = (u_1, u_0)$ is also odd. Since the total outgoing weight of $u_1$ is even, $u_1$ must have another outgoing odd arc $a_2 = (u_1,u_2) $. By repeating this process we get a path of odd arcs $a_1 = (u_0, u_1), a_2 = (u_1,u_2),\ldots$. Note that this path cannot reach the outer vertices, because every arc incident to an outer vertex has even weight. Therefore, the sequence of odd arcs must contain a directed cycle. Note that if we subtract 1 from the weight of every arc in this cycle and add 1 to their opposite arcs, the resulting weighted orientation $\cA'$ is still a $2b$-GS angular orientation of $G$. Moreover, $\cA'$ has fewer odd arcs as promised, and repeating this process leads to an even $2b$-GS angular orientation. 

For the runtime, if $G$ has $n$ vertices, once we obtain some $2b$-GS angular orientation $\cA$, which takes linear time by Theorem~\ref{thm:main}, testing whether $\cA$ is even, or finding all the odd arcs if it is not, takes only linear time since the total number of edges is linear in $n$. The runtime of eliminating all odd arcs is linear in the total number of odd arcs. Hence the total runtime to construct an even $2b$-GS angular orientation is linear in $n$. The conversion from an even $2b$-GS structure to a $b$-BGS structure takes linear time, and the bijections between the different incarnations are also linear, which concludes the proof.
\end{proof}

\begin{Remark}
In the above proof we showed that Theorem~\ref{thm:main} implies Theorem~\ref{thm:BGS-main}. We mention that, conversely, Theorem~\ref{thm:BGS-main} implies Theorem~\ref{thm:main}. Indeed, let us assume Theorem~\ref{thm:BGS-main}, and prove the existence of $d$-GS marked orientations for $d$-adapted maps. Given a $d$-adapted map $G$ we can draw an ``edge vertex'' at the center of every edge to obtain a bipartite $2d$-adapted $2d$-map $\overline{G}$. By Theorem~\ref{thm:BGS-main}, $\overline{G}$ admits a $2d$-GS marked orientation. We can then delete the edge vertices and obtain a $d$-GS marked orientation of $G$ in the way illustrated in Figure~\ref{fig:bgs_to_gs}.
\end{Remark}

\fig{width=\linewidth}{bgs_to_gs}{Merge an edge of $\overline{G}$.}

\subsection{Connections to Known Structures}\label{sec:bip-classical}




\subsubsection{Bipartite grand-Schnyder structures on $2b$-angulations of girth $2b$, and their relation to 2-orientations and bipolar orientations}\label{sec:bipartite_angulations}

In this subsection we relate $b$-BGS structures to the \emph{even Schnyder decompositions} defined in~\cite{OB-EF:Schnyder} for $2b$-angulations. The special case $b=2$ is the most classical, as $2$-BGS structures are in bijection with plane bipolar orientations.

Recall from Section~\ref{sec:classical} that a $d$-angulation $G$ is $d$-adapted if and only if it has girth $d$, and that all four incarnations of $d$-GS structures can be simplified in this case. These structures where studied in~\cite{OB-EF:Schnyder} under the name of \emph{$d$-Schnyder structures}. 
In particular, the $d$-GS marked orientation incarnation and the $d$-GS angular orientation incarnation both simplify into the same type of weighted orientations of $G$ (with no marks) called \emph{$d/(d-2)$-orientations}. In~\cite{OB-EF:Schnyder} the $d$-GS corner labelings and the $d$-GS woods of $d$-angulations were called \emph{$d$-Schnyder labelings} and \emph{$d$-Schnyder decompositions}, respectively.

When $d=2b$ is an even integer, a (nonempty) subclass of Schnyder structures on $2b$-angulations of girth $2b$ called \emph{even} was studied in~\cite{OB-EF:Schnyder}. 
The class of even $d$-Schnyder structures can easily be identified with the class of $b$-BGS structures of $2b$-angulations:
\begin{compactitem}
\item A $2b$-Schnyder labeling is \emph{even} if all the corners incident to black (resp. white) vertices have odd (resp. even) labels. This characterization exactly coincides with the definition of $b$-BGS labelings on $2b$-angulations.
\item A $2b/(2b-2)$-orientation is \emph{even} if the weight of every inner arc is even. Dividing every weight by 2 gives a structure called \emph{$b/(b-1)$-orientation} in~\cite{OB-EF:Schnyder} (these are weighted orientations of the inner edges such that edges have weight $b-1$ and vertices have weight $b$). For a $2b$-angulation the $b/(b-1)$-orientations exactly coincide with the $b$-BGS marked orientations (no mark) and $b$-BGS angular orientations (weight 0 on star edges).
\item A $2b$-Schnyder decomposition is \emph{even} if for every $i \in \{1,...,d\}$, and each black (resp. white) inner vertex $v$, the arcs leading $v$ to its parent in $W_{2i}$ and in $W_{2i-1}$ (resp. in $W_{2i}$ and $W_{2i+1}$) are the same. It was shown in~\cite{OB-EF:Schnyder} that keeping only the trees of even color does not result in any loss of information. This simplified structures, called \emph{reduced Schnyder decompositions} in~\cite{OB-EF:Schnyder}, coincide with the $b$-BGS woods of $2b$-angulations.
\end{compactitem}

The case $b=2$ (of $b$-BGS structures on $2b$-angulations) is classical and precedes~\cite{OB-EF:Schnyder}. Let $G$ be a quadrangulation. By definition, a $2/1$-orientation of $G$ is simply an (unweighted) orientation of the inner edges of $G$ such that every inner vertex has outdegree 2. These are simply called \emph{2-orientations} of $G$, and $G$ admit such an orientation if and only if it is simple (that is, has no double edge, which is equivalent to having girth 4 in this case). 
Next consider the corner labeling incarnation: because of the parity condition in BGS corner labeling, there is no loss of information in replacing each label $i$ by $\lfloor (i-1)/2 \rfloor$. 
This incarnation of 2-orientations was studied by Felsner et al. in~\cite{FeHuKa}.

As explained in the introduction, $2$-oriented quadrangulations are in bijection with plane bipolar orientations. The bijection is given by Figure~\ref{fig:2-orientations}. 


Let us finally mention that 2-orientations were used by Barri\'ere and Huemer~\cite{Barriere-Huemer:4-Labelings-quadrangulation} to design a straight-line drawing algorithm for quadrangulations. These structures (in the form of, dual, even $4$-Schnyder structures) were also used in~\cite{OB-EF:Schnyder} to design a drawing algorithm for 4-regular plane maps. In the forthcoming article~\cite{OB-EF-SL:4-GS-drawing}, we present extensions of these two algorithms (and of the drawing algorithm of He~\cite{He93:reg-edge-labeling}, which is based on transversal structures).

\subsubsection{BGS structures for quadrangulations of the hexagon, and their relation to Felsner woods}\label{sec:Felsner} 

In this subsection, we consider the $3$-BGS structures for \emph{quadrangulations of the hexagon} (6-maps where inner faces have degree 4). This case bears a strong analogy to the case of transversal structures discussed in Section~\ref{sec:transversal}. For these maps (which are edge-tight) the BGS structures can be identified with certain edge colorings, and they are related to the Felsner woods of 3-connected plane maps.

\begin{figure}
\begin{center}
\includegraphics[width=\linewidth]{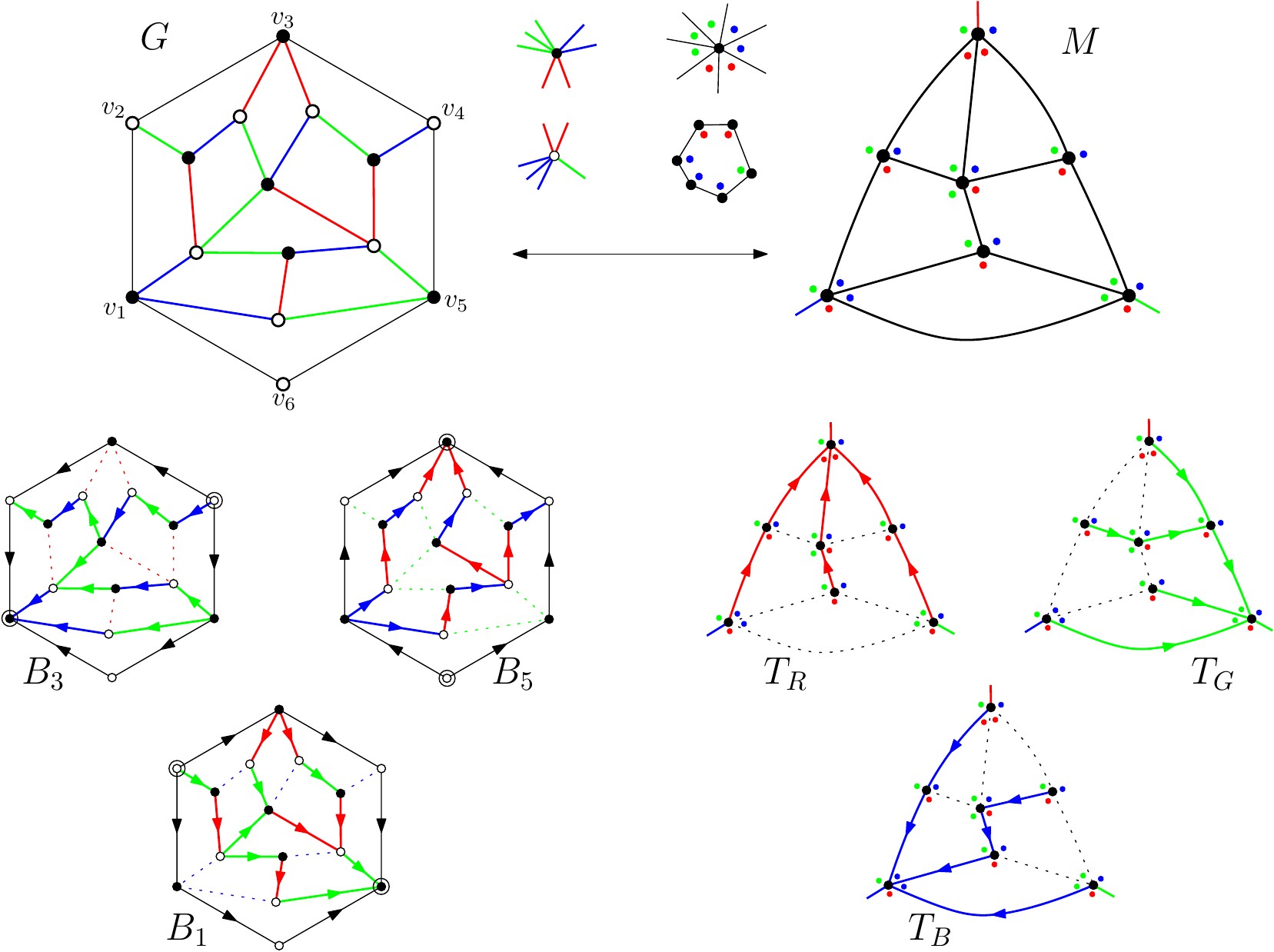}
\end{center}
\caption{On the left, a Felsner edge-coloring of an quadrangulation of the hexagon, and the 3 associated bipolar orientations (blue-green, red-blue, and green-red). 
On the right, the corresponding coloring of corners of the diagonal-map, and the 3 associated spanning trees.}
\label{fig:Felsner_woods}
\end{figure}

Let $G$ be a quadrangulation of the hexagon. Clearly, such a map is 6-adapted if it is simple and every 4-cycle bounds a face.\footnote{The 6-adapted quadrangulations of the hexagon are sometimes called \emph{irreducible quadrangulations of the hexagon} in the literature~\cite{FuPoScL,bouttier2014irreducible}.}
A \emph{Felsner edge-coloring} of $G$ is a coloring of the 
inner edges of $G$ in red, blue, green with the following properties (see the top-left part of 
Figure~\ref{fig:Felsner_woods}):
\begin{itemize}
\item[(C0)]
All inner edges incident to $v_1$ and $v_4$ are blue, all inner edges incident to $v_2$ and $v_5$ are green, and all inner edges incident to $v_3$ and $v_6$ are red.
\item[(C1)]
Around every inner vertex, the incident edges form 3 non-empty groups in clockwise order: red edges, green edges, and blue edges. 
\end{itemize} 
Felsner edge-colorings are closely related to extensions of Schnyder structures developed by Felsner~\cite{F01,Felsner:geodesic-embedbings,Felsner:lattice} for $3$-connected maps. 
Precisely, with the bi-partition of the vertices of $G$ into black and white vertices (where $v_1$ is black), one can classically associate a plane map $M$ to $G$, called the \emph{diagonal-map} of $G$, where the vertices of $M$ are 
the black vertices of $G$, and there is one edge of $M$ for each inner face $f$ of $G$, which connects the two diagonally opposed black vertices around $f$. 
The obtained map is actually a \emph{suspended map}, that is, a map with $3$ distinguished vertices ($v_1,v_3,v_5$) incident to the outer face, whose marking is indicated by a dangling half-edge incident to the outer face; the dangling half-edges at $v_1,v_3,v_5$ are colored blue, red, and green respectively. 
Let $M^{\infty}$ be the map obtained from $M$ by joining the dangling half-edges to an additional vertex $v_{\infty}$ in the outer face. 
The map $M$ is called \emph{quasi-3-connected} (case considered by Felsner) if $M^{\infty}$ is $3$-connected, which is equivalent to the fact that $G$ is 6-adapted and has at least one inner edge incident to each of $v_1,v_3,v_5$. Since each edge of $G$ corresponds to a corner of $M$, a Felsner edge-coloring is equivalent (see the top-right part of Figure~\ref{fig:Felsner_woods}) to a coloring of the 
corners of $M$ in red, blue or green such that:
\begin{itemize}
\item[(C0')]
For each color $c\in\{$red, blue, green$\}$, the corners of label $c$ in the outer face are those in the interval delimited by the dangling half-edges of the two other colors; 
and all inner corners incident to the distinguished outer vertex carrying the dangling half-edge of color $c$ have color~$c$.
\item[(C1')]
Around every non-distinguished vertex and every inner face, the incident corners in clockwise order form 3 non-empty groups: red corners, green corners, and blue corners. 
\end{itemize} 
These are exactly the \emph{Schnyder colorings} of corners defined by Felsner in~\cite{Felsner:geodesic-embedbings}, or equivalently the \emph{axial orthogonal colorings} defined by Miller in \cite{Miller:FelsnerWoods}. 
Felsner also shows that such a coloring yields $3$ spanning trees of $M$ (thus giving an extension of Schnyder woods to 3-connected plane maps). The red (resp. blue, green) tree is rooted at $v_3$ (resp. $v_1$, $v_5$), with the parent edge of each non-root vertex of $M$ being the unique incident edge marking the separation between the groups of green/blue edges (resp. of red/green edges, of blue/red edges). These trees are represented in the bottom-right part of Figure~\ref{fig:Felsner_woods}.

As we now explain, one can associate some plane bipolar orientations to a Felsner edge-coloring, in a way that is similar to the case of transversal structures treated in Section~\ref{sec:transversal}.
For a quadrangulation of the hexagon $G$ endowed with a Felsner edge-coloring, we define the \emph{red-blue bipolar orientation} as the oriented map obtained by deleting the green edges, orienting the red edges from white to black, orienting the blue edges from black to white, and orienting the $6$ outer edges in the flow-direction from $v_6$ to $v_3$. Similarly, the green-red (resp. blue-green) bipolar orientation is obtained by erasing the blue (resp. red) edges, orienting the red (resp. green) edges from black to white, orienting the green (resp. blue) edges from white to black, and orienting the outer edges in the flow-direction from $v_2$ to $v_5$ (resp. $v_4$ to $v_1$), see the bottom-left part of Figure~\ref{fig:Felsner_woods}. 

\begin{remark}
The three bipolar orientations defined above are also natural in the context of orthogonal surface representations (which are specific 3D representations) associated with the Felsner structures~\cite{Miller:FelsnerWoods,Felsner:geodesic-embedbings,felsner2008schnyder}. Then the faces of the red-blue (resp. green-red, blue-green) bipolar orientation correspond to the \emph{flats} of the orthogonal surface in the direction orthogonal to the $y$-axis (resp. $x$-axis, $z$-axis), and the dual bipolar orientation indicates order constraints on the $y$-coordinates (resp. $x$-coordinates, $z$-coordinates) of those flats so as to have a valid \emph{rigid} orthogonal surface representation of the Felsner structure. The red-blue bipolar orientation has also been recently used to obtain enumerative results on Felsner structures~\cite{enumerationFelsnerColorings}. 
\end{remark}

\begin{figure}
\begin{center}
\includegraphics[width=12cm]{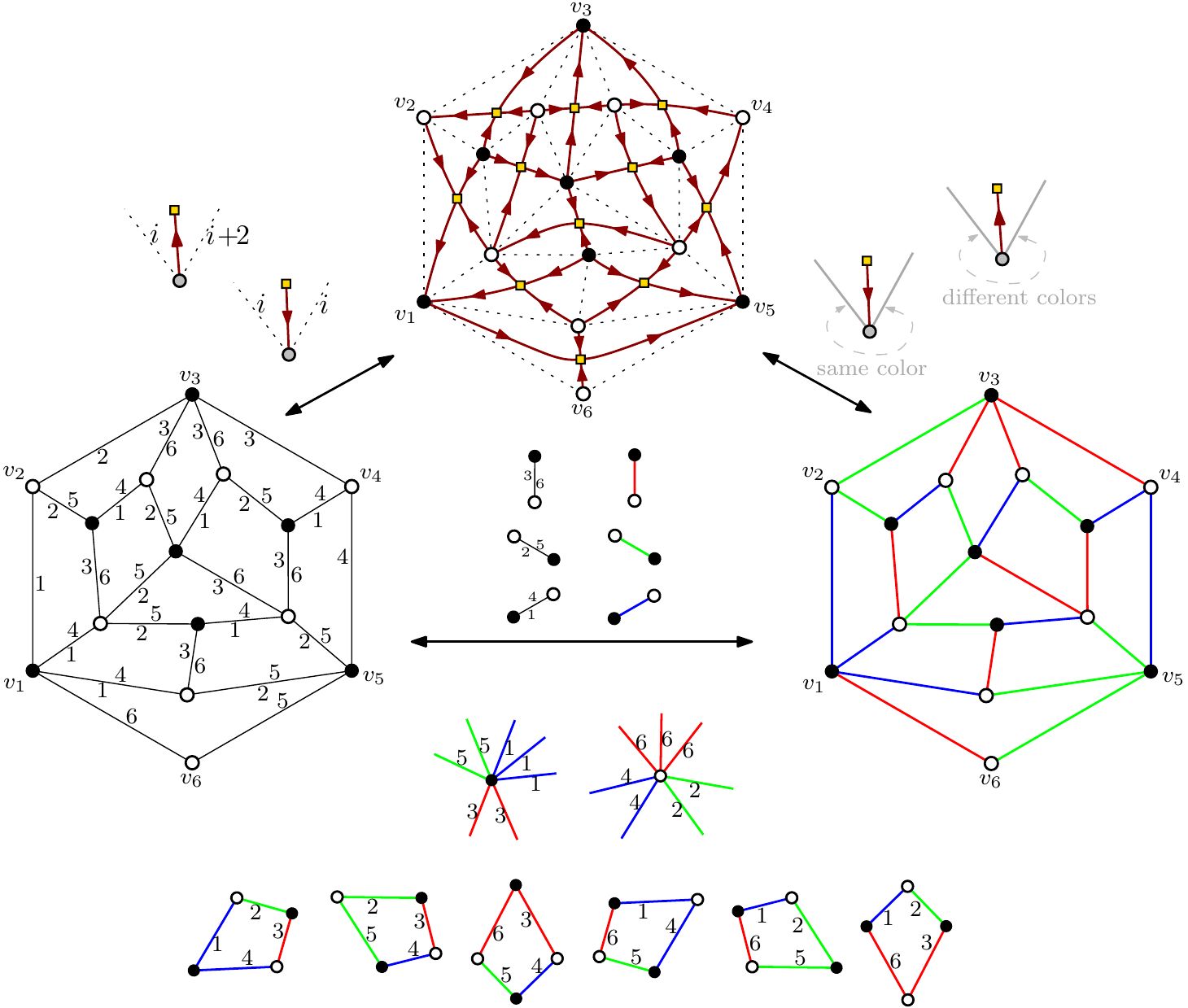}
\end{center}
\caption{On the left, a $3$-BGS arc labeling on a quadrangulation of the hexagon.
 On the right, the corresponding Felsner edge-coloring (upon coloring blue/green/red the outer edges $(v_i,v_{i+1})$ for $i=1/2/3$ modulo $3$). 
The two bottom rows show the local conditions at inner vertices and inner faces (of 6 possible types) when superimposing both structures. The top row shows the associated $3$-BGS angular orientation. 
}
\label{fig:corresp_BGS_Felsner_coloring}
\end{figure}

We now discuss the link with bipartite grand-Schnyder structures. First note that quadrangulations of the hexagon are edge-tight in the sense of Section~\ref{sec:arc_labeling}. 
As illustrated in Figure~\ref{fig:corresp_BGS_Felsner_coloring}, for $G$ a quadrangulation of the hexagon, there is a direct bijection between the Felsner edge-colorings of $G$ and the $3$-BGS structures of $G$. 
Let $\cAL$ be a $6$-GS arc labeling of $G$. It corresponds to a 3-BGS corner labeling if and only if the inner arcs with black (resp. white) initial vertex have odd (resp. even) labels. We call such an arc labeling a \emph{$3$-BGS arc labeling} of $G$. 
Condition (AL2) and the parity property imply that in a $3$-BGS arc labeling $\cAL$, around any black (resp. white) inner vertex there are 3 non-empty groups of outgoing arcs of label 1,3,5 (resp. 2,4,6). Hence, to a $3$-BGS arc labeling $\cAL$, one can associate a Felsner edge-coloring $\eta(\cAL)$ by coloring blue (resp. green, red) the inner edges with arc labels $\{1,4\}$ (resp. $\{2,5\}$, $\{3,6\}$). Conversely, to a Felsner edge-coloring $\cF$ of $G$, one associates a $3$-BGS arc labeling $\beeta(\cF)$ by giving the label 1 (resp. 3,5) to the arcs of color blue (resp. red, green) with black initial vertex, and the label 2 (resp. 4,6) to the arcs of color green (resp. blue, red) with white initial vertex. It is clear that $\eta$ and $\beeta$ are inverse mappings, hence bijections, between the set of $3$-BGS arc labelings and the set of Felsner edge-colorings of $G$.


Note that the red-blue (resp. green-red, blue-green) bipolar orientation of the Felsner edge-coloring $\eta(\cAL)$ is exactly the plane bipolar orientation $B_5$ (resp. $B_1$, $B_3$) associated to the $3$-BGS arc labeling $\cAL$ by the mapping $\beta$ (see Section~\ref{sec:arc_labeling}). Hence, according to Remark~\ref{rk:bipolarBi}, the even grand-Schnyder wood $\cW=(W_1,W_2,W_3,W_4,W_5,W_6)$ associated to $\cAL$ can be easily obtained from these bipolar orientations. Precisely, up to changing the tree-root of $W_i$ from $v_i$ to $v_{i-2}$ for all $i$, 
the tree $W_5$ (resp. $W_2$) is the leftmost outgoing tree (resp. rightmost ingoing tree) of the red-blue bipolar orientation, 
the tree $W_1$ (resp. $W_4$) is the leftmost outgoing tree (resp. rightmost ingoing tree) of the green-red bipolar orientation, and the tree $W_3$ (resp. $W_6$) is the leftmost outgoing tree (resp. rightmost ingoing tree) of the blue-green bipolar orientation. This correspondence is represented in Figure~\ref{fig:corresp_BGS_Felsner_coloring}. (Note that the trees $W_1,\ldots,W_6$ of the grand-Schnyder wood are closely related to the three bipolar orientations rather than to the three spanning trees of the Felsner wood.)

Let us finally consider the angular orientations incarnation.
In~\cite{Felsner:lattice} Felsner shows that (when $G$ has at least one inner edge incident to each of $v_1,v_3,v_5$) his corner labelings of $M$ correspond to orientations of the star edges of $G^+$ (these edges are those of the superimposition of $M$ with its dual, upon considering that there are 3 outer faces separated by the dangling half-edges) such that $v_1,v_3,v_5$ have outdegree~$2$, $v_2,v_4,v_6$ have outdegree $0$, all inner vertices of $G$ have outdegree $3$, and the star vertices have outdegree $1$. Letting $s_i$ be the star vertex in the inner face containing the outer edge $(v_{i-1},v_i)$, these orientations of $G^+$ defined in~\cite{Felsner:lattice} coincide with the $3$-BGS angular orientations of $G$, upon returning the edges $(v_2,s_2), (v_1,s_2), (v_4,s_4), (v_3,s_4), (v_6,s_6), (v_5,s_6)$. 
Moreover, as shown in the top-part of Figure~\ref{fig:corresp_BGS_Felsner_coloring}, the correspondence in~\cite{Felsner:lattice} commutes with our correspondence $\Gamma$ between $3$-BGS arc labelings and $3$-BGS angular orientations.






\section{Lattice structure}\label{sec:lattice}

In this section we show that the set of grand-Schnyder structures on a fixed $d$-adapted map has the structure of a distributive lattice, and that the covering relations (flip/flop operations) have a simple characterization. Possible applications are efficient algorithms for the exhaustive generation and uniform random generation  of $d$-GS structures on a fixed $d$-adapted map (as explained in Remark~\ref{rk:random_gen}). For instance, the exhaustive generation of transversal structures (equivalently, 4-GS structures for triangulations of the square), was used in~\cite{eppstein2012area} for the construction of certain rectangular tilings with prescribed tile areas. 

For bipartite $d$-adapted maps, the class of bipartite-grand-Schnyder structures also has a distributive lattice structure, and the results are similar.

\subsection{Lattice structure for orientations with prescribed weights}\label{sec:general-uld-framework}
In this subsection we recall some definitions and results from~\cite{felsner2009uld} which underlie the lattice results for grand-Schnyder structures.
Let $M$ be a plane map, let $V$ be its vertex-set, and let be $E$ its edge-set. Let $\alpha:V\to \mathbb{N}$ and $\beta:E\to\mathbb{N}$ be functions. 
An \emph{$\alpha/\beta$-orientation} of $M$ is a weighted orientation of $M$ 
such that every vertex $v\in V$ has outgoing weight $\alpha(v)$, and every edge $e\in E$ has weight $\beta(e)$. The pair $(\alpha,\beta)$ is called \emph{feasible} if 
$M$ admits at least one $\alpha/\beta$-orientation. In that case, letting $\bX$ be the set of $\ab$-orientations of $M$, it follows from~\cite[Theo.6]{felsner2009uld} (via duality, as explained in~\cite[Lem.14]{OB-EF:Schnyder}) that $\bX$ carries the structure of a distributive lattice, which we now describe. 


Recall that a \emph{positive cycle} in a weighted orientation is a cycle such that each arc of the cycle has positive weight. 
The \emph{push} of a positive cycle is the operation of decreasing by $1$ the weights of the arcs of the cycle, and increasing by $1$ the weights of their opposite arcs. Any $\alpha/\beta$-orientation can be obtained from another by a sequence of push. 
Moreover, the set $\bX$ of $\alpha/\beta$-orientations can be endowed with a lattice order $\preceq$ defined as follows: for two orientations $\cX_1,\cX_2$ in $\bX$, we have $\cX_1\preceq \cX_2$ if and only if $\cX_2$ can be obtained from $\cX_1$ by a sequence of push-operations at counterclockwise cycles. 
The minimal (resp. maximal) element $\cX_{\mathrm{min}}$ (resp. $\cX_{\mathrm{max}}$) in the lattice is the unique $\ab$-orientation with no clockwise positive cycle
(resp. no counterclockwise positive cycle).

A \emph{flip} (resp. \emph{flop}) is a push corresponding to a covering downward (resp. upward) relation in $\bX$. 
In other words, a flip (resp. flop) is a push on a clockwise (resp. counterclockwise) positive cycle, which cannot be realized by a sequence of several pushes on clockwise (resp. counterclockwise) positive cycles. Let $C$ be a simple cycle of $M$. A \emph{chordal path} for $C$ in a weighted orientation is a positive path staying strictly inside $C$ and such that its initial and terminal vertex are distinct and both on $C$.
We call a simple cycle of $M$ \emph{essential} if it has no chordal path in an $\alpha/\beta$-orientation (the existence of a chordal path does not depend on the $\alpha/\beta$-orientation considered since it is preserved by a cycle push). 
It is not hard to see that the flip/flop operations are exactly the pushes on essential cycles. Indeed a push on a clockwise (resp. counterclockwise) non-essential cycle can be performed by a sequence of 2 pushes on simple clockwise (resp. counterclockwise) cycles, while a push on a clockwise (resp. counterclockwise) essential cycle cannot be performed by a sequence of several pushes on clockwise (resp.  counterclockwise) simple cycles.\footnote{We mention that the definition of essential cycles in \cite{Felsner:lattice} is slightly incorrect: it forbids chordal paths starting and ending at the same vertex of $C$, which is too restrictive.}



The distributive lattice property is established by associating to each element $\cX\in\bX$ a ``potential-vector'' (see~\cite[Sec.2.3]{Felsner:lattice} and~\cite[Sec.3]{felsner2009uld}), which is an integer vector indicating for each essential cycle $C$ how many times $C$ is pushed in any sequence of flops from $\cX_{\mathrm{min}}$ to $\cX$; it can be shown that the constraints to be satisfied by such vectors are stable under componentwise min and componentwise max, and this defines the join and meet operation in the lattice $\bX$. 


Let us finally mention an easy extension to the context of $\ab$-orientations of plane maps with \emph{frozen edges}, that is, with a subset of edges whose arc-weights are fixed. In that context, the set of $\ab$-orientations (respecting the frozen edges conditions) is also a distributive lattice (one can argue by turning to $0$ the weights of arcs on frozen edges, and updating the functions $\alpha$ and $\beta$ accordingly), with the push operations restricted to positive cycles having no frozen edge. The minimal (resp. maximal) element in the lattice is the unique $\ab$-orientation such that all the positive clockwise (resp. counterclockwise) cycles have at least one frozen edge. 
Essential cycles correspond to cycles $C$ such that any chordal path of $C$ has at least one frozen edge.


\subsection{Lattice for grand-Schnyder structures}\label{sec:lattice_GS}
Let $G$ be a $d$-adapted map, and let $G^+$ be its angular map. 
Let $V_{G^+}$ and $E_{G^+}$ be the vertex-set and edge-set of $G^+$. For $i \in [d]$, let $s_i$ be the star vertex for the inner face of $G$ which is incident to the outer edge $(v_{i-1},v_{i})$, and let $\delta_i$ be its degree. 
Let $\alpha:V_{G^+}\to\mathbb{N}$ be the function such that:
\begin{itemize}
\item
for all $i\in[d]$, $\alpha(v_i)=d-\delta_i$,
\item
for $v$ an inner vertex of $G$, $\alpha(v)=d$,
\item
for $v$ a star vertex of degree $k$, $\alpha(v)=d-k$.
\end{itemize}
Let $\beta:E_{G^+}\to\mathbb{N}$ be the function such that:
\begin{itemize}
\item
for $e$ an outer edge, $\beta(e)=0$,
\item
for $e$ a star edge incident to a star vertex of degree $k$, $\beta(e)=d-k$,
\item
for $e$ an original inner edge of $G$ whose incident faces in $G$ have degrees $k,k'$, $\beta(e)=k+k'-d-2$.
\end{itemize} 
Declare the \emph{frozen edges} of $G^+$ as all the edges incident to outer vertices, where for all $i\in[d]$ the arc from $v_i$ to $s_i$ has weight $d-\delta_i$ (and the opposite arc has weight 0) and all the other arcs out of $v_i$ have weight 0 (and the opposite arcs have the weight required by $\beta$).
Then it follows from Definition~\ref{def:angular} and Remark~\ref{rk:frozen} that the $d$-GS angular orientations of $G^+$ are exactly the $\ab$-orientations with these frozen edges.


Thus, by the framework recalled in Section~\ref{sec:general-uld-framework}, the set $\bA_G$ of $d$-GS angular orientations of a $d$-adapted map $G$ carries the structure of a distributive lattice, where the order $\preceq$ on $\bA_G$ is defined by declaring $\cA\preceq \cA'$ if $\cA'$ can be obtained from $\cA$ by a sequence of push-operations at counterclockwise cycles not incident to outer vertices. Our aim is now to characterize the covering relation for this lattice, that is, to characterize the flip and flop operations. 

 Let $G$ be a $d$-map. A simple cycle $C$ of $G^+$ is called \emph{compatible} if it is simple, does not visit star-vertices of degree~$d$, and for every original edge $e$ belonging to $C$ the star vertex incident to the inner face of $G^+$ incident to $e$ and lying inside $C$ has degree $d$.
 For a cycle $C$ of $G^+$, the \emph{enclosing cycle} $\hC$ of $C$ is the cycle of original edges 
 (possibly not forming a simple cycle, but with simply connected interior) ``just outside" of $C$, that is, $\hC$ visits the original edges visited by $C$, and for each star vertex $u$ on $C$, $\hC$ passes by the original edges that are opposite to $u$ on the contours of the triangular faces incident to $u$ in the exterior of $C$, as shown in Figure~\ref{fig:comp_cycle}. Note that the cycle $\hC$ of $G$ cannot be the boundary of a face of $G$ of degree less than $d$, hence it has length at least $d$. The length of $\hC$ is called the \emph{enclosing length} of $C$. 
 
 \begin{figure}[h!]
\begin{center}
\includegraphics[width=0.6\linewidth]{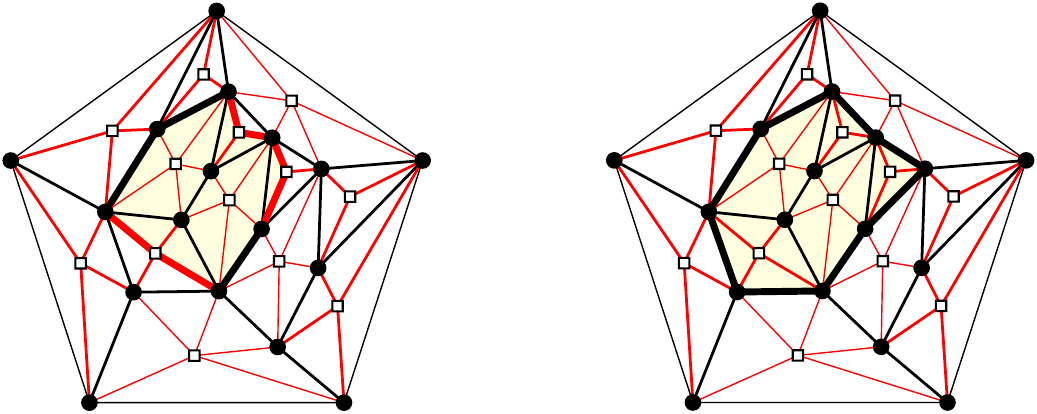}
\end{center} 
\caption{Left: a compatible cycle $C$ (shown in bold lines) on the angular map of a 5-map. Right: the same angular map, where the enclosing cycle $\hC$ of $C$ is shown in bold.}
\label{fig:comp_cycle} 
\end{figure}

Our main result is the following.

\begin{thm}\label{thm:flip}
For every $d$-adapted map $G$, the set of $d$-GS angular orientations on $G$ has the structure of a distributive lattice by the framework recalled in Section~\ref{sec:general-uld-framework}. A flip (resp. flop) for this lattice consists in pushing a clockwise (resp. counterclockwise) cycle not incident to outer vertices which is either
 \begin{itemize}
\item[(i)] the contour of an inner face of $G^+$, or 
 \item[(ii)] a compatible cycle of enclosing length $d$.
 \end{itemize}
\end{thm}

We will prove Theorem~\ref{thm:flip} in Section~\ref{sec:proof-lattice}. 

\begin{remark}\label{rk:flip_corner}
Via the bijective correspondences of Section~\ref{sec:statements}, the other incarnations of $d$-GS structures of $G$ inherit an isomorphic lattice structure. 

We first describe the covering relations in the corner labeling incarnation (see Figure~\ref{fig:flip_example}).  
For a $d$-cycle $C$ of $G$, a \emph{special corner} for $C$ is a corner $c$ of $G$ inside $C$ incident to a vertex $v\in C$, in a face of $G$ of degree smaller than $d$, and such that the edge of $G$ preceding $c$ in clockwise order around $v$ is on $C$. 
It is easy to check that, via the bijections, a flip (resp. flop) operation consists in either (i) decreasing (resp. increasing) by $1$ modulo $d$ the label of a single corner, or (ii) decreasing (resp. increasing) by $1$ modulo $d$ the labels inside a $d$-cycle $C$ of $G$ except for the special corners of $C$ whose labels are left unchanged. These operations correspond to a push on an essential cycle of type (i) or of type (ii) respectively in Theorem~\ref{thm:flip}, and they are only allowed if the resulting corner labeling is a valid $d$-GS labeling. 

We now describe the covering relations in terms of arc labelings, in the case where $G$ is edge-tight. Note that when $G$ is edge-tight all the essential cycles are of type (ii) since original edges have weight $0$ in $d$-GS angular orientations. In the incarnation as $d$-GS arc labelings, a flip (resp. flop) consists in decreasing (resp. increasing) by $1$ modulo $d$ all the arc labels inside a $d$-cycle $C$ of $G$ (of course, such an operation is only allowed if the the resulting arc labeling is a valid $d$-GS arc labeling). 
\end{remark}

 \begin{figure}[h!]
\begin{center}
\includegraphics[width=1\linewidth]{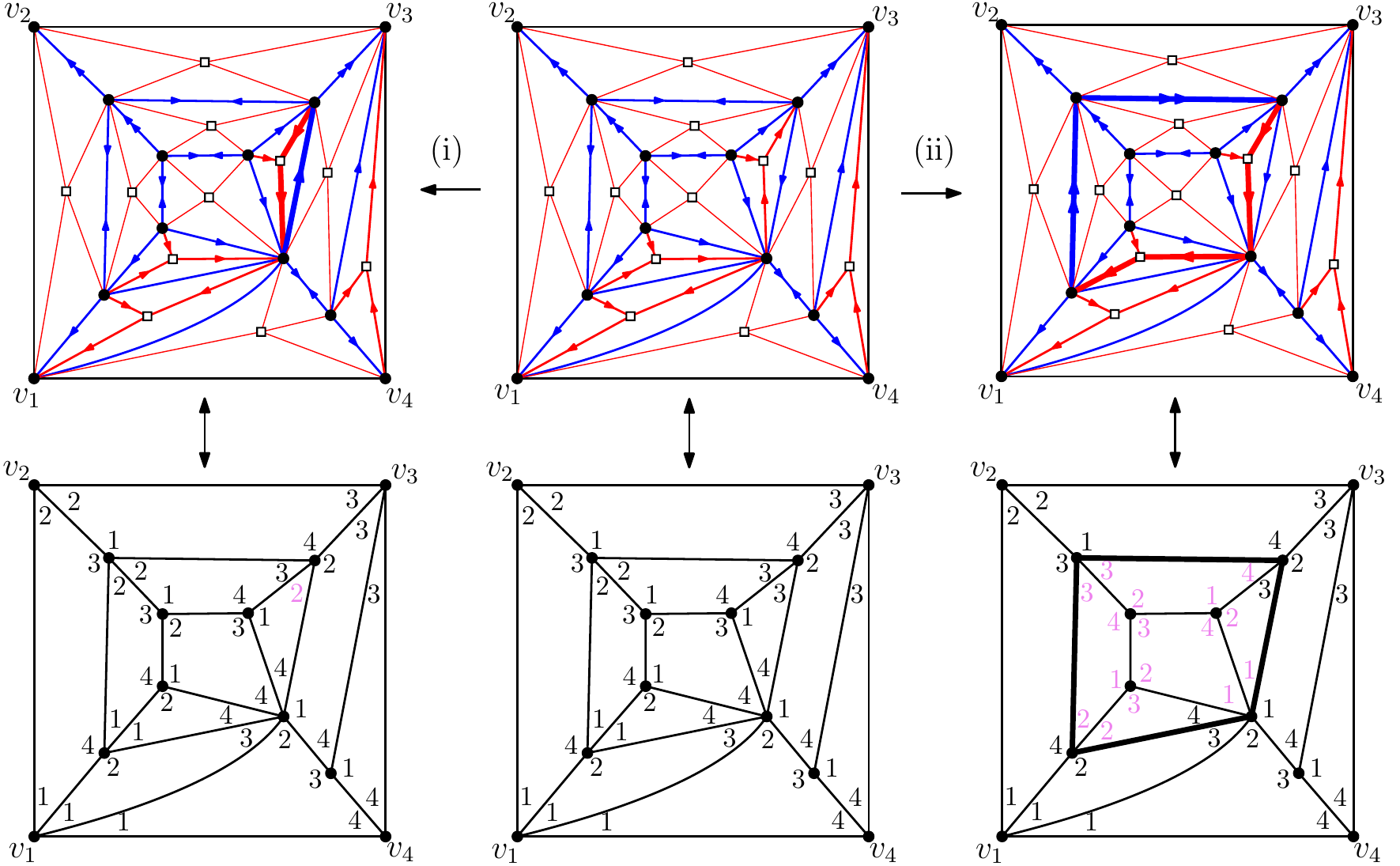}
\end{center} 
\caption{The top row shows three $4$-GS angular orientations. The left (resp. right) one is obtained from the middle one by pushing an essential clockwise cycle of type (i) (resp. 
pushing an essential counterclockwise cycle of type (ii)), where the edges of the pushed cycle are shown bolder. 
The bottom row shows the effect of the push operations on the associated $4$-GS labelings, where the modified corner labels are colored.}
\label{fig:flip_example} 
\end{figure}

\begin{remark}
It can be shown that, for a $d$-adapted map $G$, the compatible cycles of $G^+$ of enclosing length $d$ 
(corresponding to the flip/flop operations of type (ii) in Theorem~\ref{thm:flip}) are in 1-to-1 correspondence with the simple cycles of $G$ of length $d$. This is based on the following observations.
\begin{compactitem}
\item Any compatible cycle $C$ of $G^+$ has the following property $(\star)$: for any face $f$ of $G$ of degree smaller than~$d$ inside $\hat{C}$ and sharing at least one edge with $\hat{C}$, the set of edges shared by $f$ and $\hat{C}$ forms a path $P_f$ (which is not a cycle). See Figure~\ref{fig:comp_cycle}.
\item A compatible cycle $C$ is uniquely determined by its enclosing cycle $\hat{C}$. Indeed, $C$ can be recovered from $\hat{C}$ by  the following procedure:  for each face $f$ of degree smaller than~$d$ inside $C$ and sharing at least one edge with $\hat{C}$, replace the path $P_f$ by the path of length $2$ connecting the two extremities of $P_f$ via the star vertex $v_f$.
\item Any simple cycle $\hat{C}$ of $G$ satisfying Property $(\star)$ is the enclosing cycle of a (unique) compatible cycle of $G^+$. Indeed, the procedure described above produces a compatible cycle $C$ of $G^+$, whose enclosing cycle is $\hat{C}$. 
\end{compactitem}
It is easy to see, from the $d$-adaptiveness of $G$, that any simple cycle $C$ of $G$ of length $d$ satisfies Property $(\star)$. Hence, simple cycles of $G$ of length $d$ are in 1-to-1 correspondence with compatible cycles of $G^+$ of enclosing length $d$.
\end{remark}

\begin{remark}\label{rk:random_gen}
Theorem~\ref{thm:flip} can be used to define an algorithm for the uniform random generation in the set $\bA_G$ of $d$-GS angular orientations on a fixed $d$-adapted map $G$.
The algorithm is based on a Markov chain with a stopping time defined using the ``coupling from the past'' method~\cite{propp1996exact}. Let us first describe the Markov chain dynamics (expressed in~\cite[Sec.2.1]{heldt2017mixing} for outdegree-constrained orientations), that is, how the next element $A'$ is randomly chosen from the current element $A\in\bA_G$. 
Given the pre-computed list $L(G)$ of essential cycles of $G^+$ (the cycles characterized in Theorem~\ref{thm:flip}), we draw a random element $C\in L(G)$ and a random sign $\sigma\in\{-,+\}$. If $\sigma\!=\!-$ (resp. $\sigma\!=\!+$), and if $C$ forms a clockwise (resp. counterclockwise) cycle, then $A'$ is obtained from $A$ by a flip (resp. flop) at $C$; otherwise $A'=A$. This yields a symmetric irreducible Markov chain, whose unique stationary distribution is the uniform one on $\bA_G$. 
An important remark is that, once $C,\sigma$ chosen, the action $A\to A'$ can be formulated as the action of a function $\Phi_{C,\sigma}:\bA_G\to\bA_G$ (one function for each element of $L(G)\times\{-,+\}$) that is monotone with respect to the lattice order (the monotonicity follows from the above mentioned encoding by potential-vectors). 
Therefore, the coupling from the past method from~\cite[Sec.2.2]{propp1996exact} can be applied to generate an element of $\bA_G$ uniformly at random.
The precomputation of $L(G)$ can be done in polynomial time by the above results, and the computation of the minimal and maximal elements of $\bA_G$ (needed for the coupling from the past) can also be done in polynomial time (using Theorem~\ref{thm:main} and results from~\cite[Section 3]{khuller1993lattice}).
However, as shown in~\cite{miracle2016sampling} in the case of Schnyder woods, it may happen that the coupling time is exponential for certain maps~$G$.
\end{remark}


\subsection{Lattice for bipartite grand-Schnyder structures}
The set of $b$-BGS structures of a fixed bipartite $2b$-adapted map $G$ can also be given a lattice structure, and the results are very similar. 
First note that $b$-BGS angular orientations correspond to some $\ab$-orientations of $G^+$ with frozen edges defined as follows. Let $s_i$ (for $i\in [2b]$) be the star-vertex for the inner face of $G$ incident to $(v_{i-1},v_i)$, and let $\delta_i$ be the half-degree of $s_i$ (that is, $2\delta_i$ is the degree of $s_i$). Let $\alpha:V^+\to \NN$ and $\beta:V^+\to \NN$ be defined as follows:
\begin{itemize}
\item
for all $i\in [2b]$, $\alpha(v_i)=b-\delta_i$,
\item
for $v$ an inner original vertex, $\alpha(v)=b$,
\item
for $v$ a star vertex of degree $2k$, $\alpha(v)=b-k$, 
\item
for $e$ an outer edge, $\beta(e)=0$,
\item
for $e$ a star edge incident to a star vertex of degree $2k$, $\beta(e)=b-k$, 
\item
for $e$ an original inner edge of $G$, with $k,k'$ the half-degrees of the inner faces of $G$ incident to $e$, $\beta(e)=k+k'-b-1$. 
\end{itemize}
Moreover, we declare the \emph{frozen edges} to be all the edges incident to outer vertices, where for all $i\in[d]$ the arc from $v_i$ to $s_i$ has weight $b-\delta_i$ (and the opposite arc has weight 0) and all the other arcs out of $v_i$ have weight 0 (and the opposite arcs have the weight required by $\beta$).
It follows from Definition~\ref{def:BGS-angular} and Remark~\ref{rk:weight-frozen-bip} that the $b$-BGS angular orientations of $G^+$ are exactly these $\ab$-orientations with frozen edges. 
 
 \begin{thm}\label{thm:flip_bip}
For every bipartite $2b$-adapted map $G$, the set of $b$-BGS angular orientations on $G$ has the structure of a distributive lattice. The upward (resp. downward) covering relations consist in pushing a counterclockwise (resp. clockwise) simple cycle not incident to the outer vertices which is either
 \begin{itemize}
\item[(i)] 
the contour of an inner face of $G^+$, or 
 \item[(ii)]
a compatible cycle of enclosing length $2b$.
 \end{itemize}
\end{thm}

The proof of Theorem~\ref{thm:flip_bip} will be given in Section~\ref{sec:proof-lattice}.

\begin{remark}\label{rk:flip_corner_bip}
As in Remark~\ref{rk:flip_corner}, the other incarnations of $b$-BGS structures of $G$ inherit the lattice structure via the bijections. 
In the corner labeling incarnation, a flip consists in either (i) decreasing by $2$ modulo $d=2b$ the label of a single corner, or (ii) decreasing by $2$ modulo~$d$ all the labels inside a simple $d$-cycle $C$ of $G$ except for the labels of the special corners which are left unchanged (and such an operation is allowed only if the resulting corner labeling is a $b$-BGS labeling). 
When $G$ is edge-tight (which by Lemma~\ref{lem:degree-edge-tight} can only occur if $G$ is a quadrangulation of the hexagon or a map obtained from $2b$-angulation by opening some edges into 2-gons), only flips of type (ii) are ever valid. We call \emph{$b$-BGS arc labeling} a $2b$-GS arc labeling of $G$ whose arcs starting from black (resp. white) vertices have odd (resp. even) label. 
In the incarnation as $b$-BGS arc labelings, a flip consists in decreasing by $2$ modulo $2b$ the arc labels inside a $2b$-cycle $C$ of $G$ (and such an operation is allowed only if the resulting arc labeling is a valid arc labeling). 
\end{remark}


\subsection{Recovering known lattices of orientations}
As we now explain, Theorem~\ref{thm:flip} provides a common generalization of the lattice structures already already known for Schnyder decompositions of $d$-angulations~\cite{OB-EF:Schnyder} and for transversal structures~\cite{Fu07b}. 
Similarly, in the bipartite setting, Theorem~\ref{thm:flip_bip} provides a common generalization of the lattice structure for even Schnyder decompositions of bipartite $2b$-angulations \cite{OB-EF:Schnyder} and the lattice structure for Felsner woods~\cite{Felsner:lattice}. 

\medskip

\textbf{Lattice structure for Schnyder decompositions of $d$-angulations.}
Recall from Section~\ref{sec:Schnyder-decompositions} that $d$-GS angular orientations of a $d$-angulation $G$ coincide with the Schnyder decompositions $G$ as defined in \cite{OB-EF:Schnyder}. Via this correspondence, the lattice structure defined in Theorem~\ref{thm:flip} coincides with the lattice structure defined in~\cite{OB-EF:Schnyder} on Schnyder decompositions of $G$. In the particular case $d=3$, we recover the classical lattice structure defined on the set of Schnyder woods of a triangulation. Let us now fix a $d$-angulation $G$ and consider the characterization of flips given by Theorem~\ref{thm:flip}. The compatible cycles of $G$ are the simple cycles of the original map $G$. Hence, by Theorem~\ref{thm:flip}, the essential cycles are simple cycles of $G$ of length $d$ (essential cycle of type (i) cannot occur for essential cycles, since star edges have weight $0$). 
In terms of $d$-GS labelings, a flip (resp. flop) operation consists in decreasing (resp. increasing) by $1$ modulo $d$ the labels of corners inside a $d$-cycle (note that there is no special corner, of unchanged label, inside the $d$-cycle, since all faces have degree $d$). We thus recover the lattice structure properties of~\cite[Sec.3.3]{OB-EF:Schnyder}.

\medskip

\textbf{Lattice structure for even Schnyder decompositions of $2b$-angulations.}
As before, the lattice structure defined in \cite{OB-EF:Schnyder} even Schnyder decompositions of a bipartite $2b$-angulation $G$ coincides with the lattice structure on $b$-BGS structures given by Theorem~\ref{thm:flip_bip} (via the identification given in Section~\ref{sec:bipartite_angulations}). By Theorem~\ref{thm:flip_bip} the essential cycles are $2b$-cycles of the original map $G$, and in the corner labeling incarnation, a downward (resp. upward) covering relation consists in decreasing (resp. increasing) by $2$ modulo $2b$ the labels of corners inside a given $2b$-cycle. This recovers the flip descriptions from \cite{OB-EF:Schnyder}.

 \begin{figure}
\begin{center}
\includegraphics[width=\linewidth]{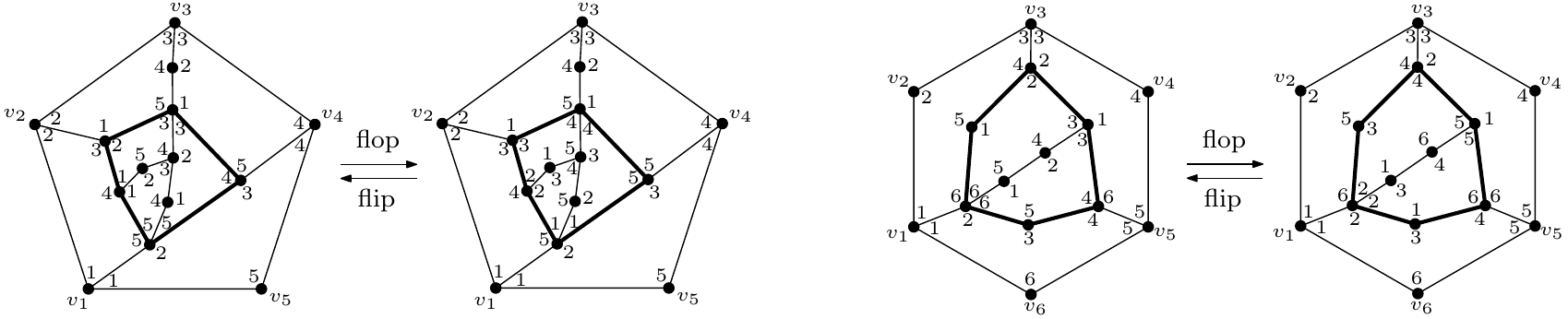}
\end{center} 
\caption{Left: a flip/flop operation on a $5$-GS corner labeling of a pentagulation.
Right: a flip/flop operation on a $3$-BGS) corner labeling of a hexangulation. 
}
\label{fig:flip_flop_angulation} 
\end{figure}

\medskip

\textbf{Lattice structure for transversal structures.}
Let us now examine the lattice obtained when $G$ is a triangulation of the square.
Recall from Section~\ref{sec:transversal} that the 4-GS structures of $G$ are in bijection with the transversal structures of $G$.
Via this correspondence, the lattice structure defined in Theorem~\ref{thm:flip} coincides with the lattice structure defined in~\cite{Fu07b} on the transversal structure of $G$.
Let us now consider the characterization of flips given by Theorem~\ref{thm:flip}
In the 4-GS angular orientation of $G$, the original edges have weight~$0$, hence only the essential cycles of type (ii) can be positive.
Hence the flip/flop operations correspond to pushing a directed cycle (made of star edges) of enclosing length $4$.
In the incarnation as transversal edge-partitions, the flip/flop operations also have a nice formulation, as illustrated in the left part of Figure~\ref{fig:flip_flop_transversal}. Given a transversal edge-partition of $G$, a 4-cycle $C$ is called \emph{alternating} if the edges around $C$ are red/blue/red/blue (such a cycle is the enclosing cycle of an essential cycle in the associated GS angular orientation). 
 For such a cycle $C$ and $v\in C$, the left (resp. right) edge of $v$ is the edge on $C$ just after (resp. just before) $v$ in clockwise order around $C$. 
It is shown in~\cite{Fu07b} that only two situations can occur for the color of the edges inside $C$ and incident to $C$:
 \begin{itemize}
 \item
either every edge $e$ inside $C$ and incident to a vertex on $C$ has the color of the right edge of $v$, in which case $C$ is called a right alternating 4-cycle (it is the enclosing cycle of an essential clockwise cycle in the 4-GS angular orientation),
 \item
 or every edge $e$ inside $C$ and incident to a vertex on $C$ has the color of the left edge of $v$, in which case $C$ is called a left alternating 4-cycle (it is the enclosing cycle of an essential counterclockwise cycle in the 4-GS angular orientation).
 \end{itemize} 
Then, a flip (resp. flop) operation consists in switching the colors of all the edges inside a right (resp. left) 
alternating 4-cycle, turning it into a left (resp. right) one~\cite[Theo.2]{Fu07b}. 
This is consistent with the formulation of the flip/flop operation on $4$-GS arc labelings given in Remark~\ref{rk:flip_corner}, and the correspondence between arc labelings and transversal structures illustrated in Figure~\ref{fig:corresp_GS_transversal}.

 \begin{figure}
\begin{center}
\includegraphics[width=\linewidth]{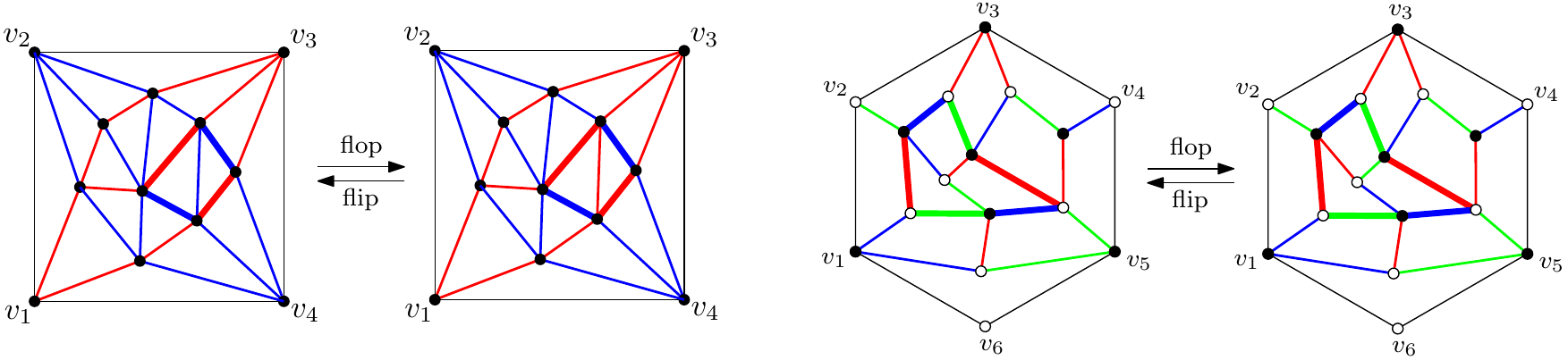}
\end{center} 
\caption{Left: a flip/flop operation on a transversal structure, corresponding to an alternating 4-cycle indicated in bold lines. Right: a flip/flop operation on a Felsner edge-coloring, corresponding to an alternating 6-cycle indicated in bold lines.}
\label{fig:flip_flop_transversal} 
\end{figure}

\medskip

\textbf{Lattice structure for Felsner woods.}
Let us lastly discuss the case where $G$ is a quadrangulation of the hexagon. 
In the 3-BGS angular orientations of $G$, the original edges have weight $0$, so that the positive cycles are made of star edges, and the essential ones are those of enclosing length $6$. 
We recover~\cite[Lem.17]{Felsner:lattice} (precisely, the identity $2k=a+6$ obtained in its proof). In the incarnation of Felsner edge-colorings, we have a formulation of the covering relations as for transversal edge-partitions, which is similar to the one given for transversal structures. This covering relation is indicated in the right side of Figure~\ref{fig:flip_flop_transversal}. In a Felsner edge-coloring, we call a 6-cycle $C$ \emph{alternating} if the colors of edges in a clockwise traversal of $C$ are red/blue/green/red/blue/green 
(these are the enclosing cycles of essential cycles in the associated $3$-BGS angular orientation). 
 For such a cycle $C$ and a vertex $v$ on $C$, the left (resp. right) edge of $v$ is the edge on $C$ just after (resp. just before) $v$ in a clockwise traversal of $C$. Then 
 two situations can occur: 
 \begin{itemize}
 \item
 every edge $e$ inside $C$ and incident to a vertex on $C$ has the color of the right edge of~$v$, in which case $C$ is called a right alternating 6-cycle (it is the enclosing cycle of an essential clockwise cycle in the 3-BGS angular orientation),
 \item
 or every edge $e$ inside $C$ and incident to a vertex on $C$ has the color of the left edge of~$v$, in which case $C$ is called a left alternating 6-cycle (it is the enclosing cycle of an essential counterclockwise cycle in the 3-BGS angular orientation).
 \end{itemize} 
A proof that only these two cases occur can be given along the same lines as in~\cite{Fu07b}: any 6-cycle $C$ is the enclosing cycle of an essential compatible cycle, and these have inward degree $0$ (see Lemma~\ref{lem:compa}). Hence (through the bijection $\Gamma$ between 3-BGS arc labelings and angular orientations of $G^+$),  at each vertex $v\in C$ the arcs inside $C$ with initial vertex $v$ all have the same color. Moreover, the fact that $C$ is left alternating (resp. right alternating) is equivalent (under $\Gamma$) to the fact that the corresponding compatible cycle is directed, in counterclockwise (resp. clockwise) direction.

Then, a flip operation turns a right alternating 6-cycle into a left one, by applying $\{$red$\to$blue, blue$\to$green, green$\to$red$\}$ to the colors of the edges inside $C$; and conversely a flop operation turns a left alternating 6-cycle into a right one, by applying $\{$red$\to$green, blue$\to$red, green$\to$blue$\}$ to the colors of the edges inside $C$. Again, this is consistent with the formulation of the flip/flop operation on arc labelings given in Remark~\ref{rk:flip_corner_bip}, and the correspondence from these labelings to Felsner edge-colorings illustrated in Figure~\ref{fig:corresp_GS_transversal}.


\subsection{Proof of Theorems~\ref{thm:flip} and~\ref{thm:flip_bip}}\label{sec:proof-lattice}
This subsection is devoted to the proof of Theorems~\ref{thm:flip} and~\ref{thm:flip_bip}. 
Roughly speaking, we will show that essential cycles for $d$-GS orientations need to be compatible and have ``inward weight'' 0. 
 
Let $C$ be a not necessarily simple cycle of $G$, whose interior is simply connected. 
The \emph{inward weight} of the cycle $C$, denoted by $\inweight(C)$, is the sum of the weights of the arcs strictly inside $C$ with initial vertex on $C$.

\begin{lemma} \label{lem:ingoing-weight-angular} 
Let $G$ be a $d$-map. Let $\cA$ be a weighted orientation of the angular map $G^+$ satisfying Conditions (A1) and (A2) of $d$-GS angular orientations. 
Let $C$ be a not necessarily simple cycle of $G$, which is the contour of a simply connected finite region. Then the inward weight of $C$ is
$$\inweight(C)=\ell(C)-d+\sum_{a\in C}(d-\deg(f_a)),$$
where $\ell(C)$ is the length of $C$, the sum is over the arcs of $C$, and for an arc $a$ of $C$ the face incident to $a$ inside $C$ is denoted by $f_a$.
\end{lemma}

\begin{proof} 
We start by computing the total weight of the edges of $G^+$ inside $G$.
Let $V,E,F$ be the set of vertices, edges and faces of $G$ strictly inside $C$. Each star edge $\epsilon=(v_f,v)$ can be associated to the original edge $e$ that follows $\epsilon$ in counterclockwise order around~$v$. Conditions (A1) and (A2) of $d$-GS angular orientations ensure that for each original edge $e$ inside~$C$, the total weight of $e$ together with the two associated star edges is equal to $d-2$. Hence,  the total weight of the edges of $G^+$ inside $C$ is $\ds (d-2)|E|+\sum_{a\in C}(d-\deg(f_a))$, 
where the second term gathers the contributions of star edges associated to edges on $C$.
The total weight of the vertices of $G^+$ inside $C$ is $d|V|+\sum_{f\in F}(d-\deg(f))$.
Hence, 
$$\inweight(C)=(d-2)|E|+\sum_{a\in C}(d-\deg(f_a))-d|V|-\sum_{f\in F}(d-\deg(f))$$
The Euler relation gives $|V|+|F|=|E|+1$, while the incidence relation between faces and edges gives $\sum_{f\in F}\deg(f)=2|E|+\ell(C)$. Combining these relations proves the lemma.
\end{proof}



Next we prove an accessibility property for $d$-GS angular orientations.
\begin{lemma}\label{lem:alpha-ori-acc}
Let $G$ be a $d$-adapted map endowed with a $d$-GS angular orientation. Then, for any vertex $v$ of $G^+$ which is not a star vertex of degree $d$, there exists a positive path in $G^+$ starting at $v$ and ending at an outer vertex. 
\end{lemma} 
\begin{proof}
It suffices to prove the property for the original vertices (because it then easily follow for the star vertices).
Let $U$ be the set of vertices $v$ of $G$ for which there exists a positive path in $G^+$ from $v$ to an outer vertex.
Let $G_U$ be the submap of $G$ made of $U$ and the edges of $G$ with both endpoints in $U$ (this is a connected submap containing the outer vertices and the outer edges). 
Suppose for contradiction that there is a vertex $v$ of $G$ which is not in $U$. Then consider the contour $C$ of the face of $G_U$ containing $v$. By definition of $G_U$, there cannot be any edge of $G$ strictly inside $C$ with both endpoints on $C$. Consider the total weight $\om(C)$ (resp. $\overline \om(C)$) of the arcs of $G^+$ strictly inside $C$ and having their initial (resp. terminal) vertex on $C$. We will prove $\overline \om(C)>0$ and reach a contradiction.
By Lemma~\ref{lem:ingoing-weight-angular},
$$\om(C)=\textrm{length}(C)-d+\sum_{a\in C}(d-\deg(f_a)),$$
where the sum is over the arcs of $C$ and $f_a$ is the face of $G$ incident to $a$ inside $C$. Let $J$ be the set of arcs of $G^+$ strictly inside $C$ whose terminal vertex is on $C$, and let $I\subseteq J$ be the subset of those on original edges. For $a\in J$, denote by $e_a$ the edge of $G^+$ that contains $a$, and for $a\in I$ denote by $f_a'$ the face of $G$ incident to $a$ and on the left of $a$. We claim that
$$\om(C)+\overline\om(C)=\sum_{a\in J}\beta(e_a)=\sum_{a\in C}(d-\deg(f_a))+\sum_{a\in I}(\deg(f_a')-2).$$
To see that this identity holds, we pair each arc $a\in I$ with the star arc $a'\in J$ preceding $a$ in clockwise order around the terminal vertex of $a$, and we observe $\beta(e_a)+\beta(e_{a'})=(d-2)-(d-\deg(f_a'))=\deg(f_a')-2$. The unpaired arcs in $J$ are the star arcs $a'$ followed (in clockwise order around the terminal vertex of $a'$) by an arc $a\in C$, and satisfy $\beta(e_{a'})=d-\deg(f_a)$; so the sum of $\beta(e_{a'})$ over these unpaired arcs is $\sum_{a\in C}(d-\deg(f_a))$. This proves the claimed identity, and from it we obtain
$$\overline\om(C)= \sum_{a\in I}(\mathrm{deg}(f_a')-2)-\textrm{length}(C)+d.$$
Moreover, $\textrm{length}(C)\leq \sum_{a\in I}\ell(f_a')$, where $\ell(f_a')$ is the number of arcs on $C$ incident to $f_a'$ (this is an equality unless $f_a'=f_b'$ for distinct arcs $a,b\in I$). Since there is no edge inside $C$ with both endpoints on $C$, we have 
$\ell(f_a')\leq \mathrm{deg}(f_a')-2$ for all $a\in I$, hence $\overline\om(C)\geq d>0$. 
So there exists an arc $a=(s,u)$ of $G^+$ with positive weight, such that $s$ is strictly inside $C$ (hence not in $U$) and $u$ is on $C$ (hence in $U$). The vertex $s$ cannot be a vertex of $G$ (otherwise it would be in $U$), hence it is a star vertex. However this implies that for every vertex $w\neq u$ adjacent to $s$, the arc $(w,s)$ has positive weight, so that $w\in U$. This implies that $C$ is the contour of the face of $G$ corresponding to the star vertex $s$, which contradicts our assumption that the interior of $C$ contains a vertex $v$ of $G$ which is not in $U$.
\end{proof}
\begin{remark}
We say that a weighted orientation of a plane map is \emph{accessible} if for every vertex~$v$, there exists a positive path from $v$ to an outer vertex. 
Lemma~\ref{lem:alpha-ori-acc} shows that any $d$-GS angular orientation is accessible, upon deleting the star vertices of degree $d$ and their incident edges. This generalizes the accessibility properties known for $d/(d-2)$-orientations~\cite[Theo.13]{Bernardi-Fusy:dangulations} and for the $\alpha$-orientations associated to transversal structures~\cite[Lem.3]{Fu07b}.
\end{remark}


We define a \emph{pseudo-compatible cycle} as a not necessarily simple cycle $C$ of $G$ whose interior is simply connected, that does not visit star-vertices of degree~$d$, and such that for every original edge $e$ belonging to $C$, the inner face of $G^+$ incident to $e$ and lying inside $C$ has its incident star vertex of degree $d$. Clearly, pseudo-compatible cycles are a generalization of compatible cycles (these are the \emph{simple} pseudo-compatible cycles), and we will now extend the definition of enclosing length to pseudo-compatible cycles; see Figure~\ref{fig:pseudo-compatible}(a).
Let $C$ be a pseudo-compatible cycle, let $\textrm{Orig}(C)$ be the set of arcs of $C$ belonging to original edges of $G$, and let $\textrm{Star}(C)$ be the set of arcs in $C$ whose terminal vertex is a star vertex. For $a\in \textrm{Star}(C)$, we consider the terminal vertex $s$ of $a$ (which is a star vertex) and the next arc $a'$ along $C$, and we define $\ell(a):=\deg(s)-k$, where $k$ is the number of corners incident to $s$ between $a$ and $a'$ on the side of the region included by $C$ (if $C$ is a clockwise contour, then $\ell(a)$ is the number of corners from $a'$ to $a$ in clockwise order around $s$).
We define the enclosing length of $C$ as 
$$\pseudol(C)=|\textrm{Orig}(C)|+\sum_{a \in \textrm{Star}(C)}\ell(a).$$
Note that for a compatible cycle $C$, $\pseudol(C)$ coincides with the length of the enclosing cycle of $C$, and Figure~\ref{fig:pseudo-compatible}(a) provides a generalization of this interpretation for pseudo-compatible cycles.

\fig{width=\linewidth}{pseudo-compatible}{(a) A pseudo-compatible $C$ in a map $G^+$ enclosing a simply connected region $R$. The star vertices on $C$ are indicated (by squares) together with the contour of the face of $G$ corresponding to these star vertices (represented as circles). For an arc $a$ in $\textrm{Star}(C)$, the quantity $\ell(a)$ represents the length of a portion of the contour of the face of $G$ corresponding to the star vertex. Putting together these partial face contours and the arcs in $\textrm{Orig}(C)$ gives a cycle $\hC$ of $G$ of length $\pseudol(C)$. (b) Duplicating part of the oriented map $G^+$, one can get a compatible cycle $C'$ and its enclosing cycle $\hC'$ (in an angular map which is not $G^+$) such that the interior of $C'$ is the same as the interior of $C$, and the interior of $\hC'$ satisfies the conditions of $d$-GS orientations (in terms of weights of vertices and edges). (c) Notation for the proof of Theorem~\ref{thm:flip}.}
 
 \begin{lemma}\label{lem:compa}
 Let $G$ be a $d$-map endowed with a $d$-GS angular orientation. Let $C$ be a pseudo-compatible cycle of $G^+$, and let $\ell$ be its enclosing length. Then the inward weight of $C$ is equal to $\ell-d$. 
\end{lemma}
\begin{proof}
Let us first prove the formula when $C$ is simple, that is, a compatible cycle.
Let $C$ be a compatible cycle and let $\hC$ be the enclosing cycle.
Let $V,E$ (resp. $\hat V,\hat E$) be the sets of vertices and edges of $G^+$ that are strictly inside the cycle $C$ (resp. $\hat C$).
The inward weights of $C$ and $\hat C$ are given by 
$$\inweight(C)=\sum_{e\in E}\be(e)-\sum_{v\in V}\al(v)~\textrm{ and }~\inweight(\hC)=\sum_{e\in \hat E}\be(e)-\sum_{v\in \hat V}\al(v).$$
Let $F$ be the set of faces of $G$, of degree smaller than $d$, which are inside $\hC$ and have at least one edge on $\hC$ (i.e., are inside $\hC$ but not inside $C$). For $f\in F$, let $\ell(f)$ be the number of arcs on $\hC$ that are on the contour of $f$. It is easy to see that $\sum_{e\in \hat E\backslash E}\be(e)=\sum_{f\in F}(\ell(f)+1)(d-\deg(f))$, and $\sum_{v\in \hat V\backslash V}\al(v)=\sum_{f\in F}(d-\deg(f))$.
Hence, $\inweight(\hC)-\inweight(C)=\sum_{f\in F}\ell(f)(d-\deg(f))$, and 
$$\inweight(C)=\inweight(\hC)-\sum_{a\in \hC}(d-\deg(f_a)),$$
where the sum is over the arcs of $\hC$ and $f_a$ is the face of $G$ incident to $a$ inside $\hC$. Using the expression for $\inweight(\hC)$ provided by Lemma~\ref{lem:ingoing-weight-angular} gives $\inweight(C)=\ell-d$ as claimed.

Let us now prove the formula for a pseudo-compatible cycle $C$ which is not simple. The idea of the proof is represented in Figure~\ref{fig:pseudo-compatible}(b). Upon duplicating part of the angular map $G^+$, it is possible to obtain a simple cycle $\hC'$, the interior of which is the angular map of a map $M'$ with outer face $\hC'$ endowed with a weighted orientation with the same edges and vertex weight as a $d$-GS orientation, and such that $\hC'$ is the enclosing cycle of a compatible cycle $C'$, such that 
$\pseudol(C)=\pseudol(C')$ (which is the length of $\hC'$) and $\inweight(C)=\inweight(C')$ (because the weighted orientations in the inner regions of $C$ and $C'$ are identical). Hence applying the above reasoning on the compatible cycle $C'$ gives the result for the pseudo-compatible cycle $C$.
\end{proof}


We can now complete the proof of Theorem~\ref{thm:flip}.
\begin{proof}[Proof of Theorem~\ref{thm:flip}]
Let $G$ be a $d$-map endowed with a $d$-GS angular orientation. A flip/flop operation in the lattice of $d$-GS angular orientations of $G$ correspond to pushing a positive simple cycle which is not incident to the outer vertices, and is essential. Let $C$ be a positive simple cycle not incident to the outer vertices, which is essential, and is not the contour of an inner face of $G^+$. We want to show that $C$ is a compatible cycle of enclosing length $d$.

Suppose for contradiction that $C$ is not compatible. Since $C$ is not compatible, it has at least one original edge $e=\{u,v\}$ whose incident face $f$ inside $C$ has degree less than $d$. Consider the star edges $\eps_1=\{v_f,u\}$ and $\eps_2=\{v_f,v\}$ joining $u,v$ to the star vertex $v_f$. The weights of $\eps_1,\eps_2$ in angular orientations are $\om(\eps_1)=\om(\eps_2)=\om(v_f)=d-\deg(f)>0$. Hence either (a) both arcs $a_1=(u,v_f)$ and $a_2=(v,v_f)$ have positive weights, or (b) $a_1$,$-a_2$ form a positive path from $u$ to $v$, or (c) $a_2,-a_1$ form a positive path from $v$ to $u$. Since $C$ is essential (and not reduced to $e,\eps_1,\eps_2$), cases (b) and (c) are excluded. Hence, we are in case (a). By Lemma~\ref{lem:alpha-ori-acc} there exists a positive path $P$ from $v_f$ to one of the vertices on $C$. Concatenating either $a_1$ or $a_2$ with $P$ gives a chordal path for $C$, which is a contradiction. 

Thus, $C$ is a compatible cycle, and it remains to prove that it has enclosing length $d$. Suppose for contradiction that the enclosing length is greater than $d$. Then the inward weight of $C$ is positive by Lemma~\ref{lem:compa}. Let $a=(u,v)$ be an arc inside $C$ with positive weight, having initial vertex $u$ on $C$. Let $U$ be the set of vertices on $C$ or inside $C$ that can be reached from $u$ by a positive path of arcs strictly inside $C$. The situation is represented in Figure~\ref{fig:pseudo-compatible}(c).
By assumption $C$ is essential, so $u$ is the only vertex from $U$ on $C$. Let $G_U$ be the submap of $G^+$ made of the vertices and edges on $C$, together with the vertices in $U$ and the edges with both ends in $U$. The submap $G_U$ is connected. Let $f$ be the inner face $f$ of $G_U$ incident to the edges of $C$, and let $C'$ be the clockwise contour of $f$.
We claim that $C'$ is a pseudo-compatible cycle of $G^+$. Indeed, otherwise there would
a face $f'\in G$ of degree smaller than $d$ within $f$ and sharing at least one edge $\{p,q\}$ with $C'\backslash C$. But then at least one of the arcs $(p,v_f)$ or $(q,v_f)$ would have positive weight, so $v_f$ would be in $U$ (since $p$ and $q$ are in $U$), a contradiction. Hence, $C'$ is a pseudo-compatible cycle of $G^+$.  
Moreover, $\inweight(C')<\inweight(C)$ (since any arc inside $f$ having initial vertex in $U$ has weight 0) and $\pseudol(C)<\pseudol(C')$. However these inequalities are incompatible with Lemma~\ref{lem:compa}, which gives a contradiction.

We have thus proved that any non-facial essential cycle of $G^+$ has to be a compatible cycle of enclosing length $d$. Conversely, by Lemma~\ref{lem:compa}, any compatible cycle of enclosing length $d$ 
has inward weight $0$, hence is essential.
\end{proof}

\begin{proof}[Proof of Theorem~\ref{thm:flip_bip}]
The proof of Theorem~\ref{thm:flip_bip} is almost identical to that of Theorem~\ref{thm:flip}. The inward weight results of Lemma~\ref{lem:compa} can be used to give an analogue for $b$-BGS angular orientations since such orientations are obtained by halving the weights of even $2b$-GS orientations. Then, the characterization of the essential cycles involved in flips/flops use the same arguments as for $d$-GS angular orientations.
\end{proof}

\section{Dual grand-Schnyder structures}\label{sec:dual}

In this section, we study the structures obtained by taking the ``dual" of grand-Schnyder structures, and of their bipartite analogs. In the forthcoming article \cite{OB-EF-SL:4-GS-drawing}, we will use the dual of 4-grand-Schnyder woods to design some graph-drawing algorithms.

Recall that the \emph{dual} $G^*$ of a plane map $G$ is obtained as follows: 
\begin{compactitem}
\item we draw a vertex $v_f$ of $G^*$ inside every face $f$ of $G$ and call $v_f$ the \emph{dual vertex} of $f$, 
\item and we draw an edge $e^*$ of $G^*$ connecting $v_f$ and $v_g$ across each edge $e$ separating the faces $f$ and $g$ of $G$, and call $e^*$ the \emph{dual edge} of the \emph{primal edge} $e$.
\end{compactitem}
The dual vertex of the outer face of $G$ is called the \emph{root-vertex} of $G^*$ and is denoted by $v^*$. Under this construction, the degree of the vertex $v_f$ is equal to the degree of the face $f$. The \emph{dual of a corner} is also well-defined: it is the corner of $G^*$ that faces the original corner of $G$.

If $G$ is a $d$-map, then we call its dual $G^*$ a \emph{dual $d$-map}. More concretely, a \emph{dual $d$-map} is a vertex-rooted plane map whose vertices have degree at most~$d$ and whose root-vertex has degree~$d$ and is incident to $d$ distinct faces. In this case, the edges $e_1^*,\ldots,e_d^*$ in counterclockwise order around $v^*$ are called \emph{root-edges}, where $e_i^*$ is the dual edge of $\{v_i,v_{i+1}\}$. The faces $f_1^*,\ldots,f_d^*$ in counterclockwise order around $v^*$ are called \emph{root-faces}, where $f_i^*$ is dual to $v_i$.


We call a dual $d$-map $G^*$ \emph{dual $d$-adapted} if every \emph{cut} that does not separate a single vertex has size at least $d$. Note that this is exactly the dual notion of $d$-adaptedness, hence a dual $d$-map $G^*$ is dual $d$-adapted if and on if its primal map $G$ is $d$-adapted.

\subsection{Dual of grand-Schnyder structures}\label{sec:dual-GS} \hfill\\
We give two incarnations for the ``dual'' of $d$-GS structures, one in terms of corner labeling and the other as a tuple of trees. These two incarnations are represented in Figure~\ref{fig:4_dual}. They are closely related to the corner labeling and wood incarnations of GS structures of the primal map. 

\fig{width=\linewidth}{4_dual}{Wood and labeling of a dual 4-GS structure.}

\begin{definition}\label{def:dual-GS-labeling}
Let $G$ be a $d$-map. A \emph{dual $d$-grand-Schnyder corner labeling}, or \emph{dual $d$-GS labeling}, of $G^*$ is an assignment to each corner of $G^*$ of a label in $[d]$ satisfying: \begin{itemize}
  \item[(L0\sups{*})] The corners incident to the root-face $f_i^*$ have label $i$.
  \item[(L1\sups{*})] Around every non-root face or non-root vertex, the sum of clockwise jumps is $d$.
  \item[(L2\sups{*})] Consecutive corners around a vertex have distinct labels.
  \item[(L3\sups{*})] Let $a$ be an arc not incident to the root-vertex, and let $v$ be its initial vertex. Then the sum of the clockwise jump across $a$ and the clockwise jump along $a$ is at least $1+d-\text{deg}(v)$.
\end{itemize} 
\end{definition}

\fig{width=\linewidth}{dual-labeling}{Conditions defining dual $d$-GS labelings.}

The conditions defining dual $d$-GS labelings are represented in Figure~\ref{fig:dual-labeling}. As indicated in Figure~\ref{fig:4_dual}(a), there is a simple bijection between the $d$-GS labelings of $G$ and the dual $d$-GS labelings of $G^*$.
 Consider the following operation: given a corner labeling $\cL$ of $G$,  we first complement $\cL$ by labeling the outer corner at $v_i$ by $i$, and then give every corner of $G^*$ the label of its dual corner. We denote this operation by $\De$. It is easy to see that the Conditions (L0\sups{*}) $\sim$ (L3\sups{*}) are just translations of the Conditions (L0) $\sim$ (L3) of $d$-GS labelings via $\Delta$. Hence Theorem~\ref{thm:main} gives:

\begin{prop}\label{prop:existence-dual-labeling}
Let $G$ be a $d$-map. The mapping $\De$ is a bijection between the set of $d$-GS labelings of $G$ and the set of dual $d$-GS labelings of $G^*$. Hence $G^*$ admits a dual $d$-GS labeling if and only if $G^*$ is dual $d$-adapted, and in this case a dual $d$-GS labeling can be computed in linear time in the number of vertices of $G^*$.
\end{prop}

Now we present the incarnation of dual GS structures as a tuple of subsets of arcs $(W^*_1,...,W^*_d)$. We will see in Proposition~\ref{prop:primal_dual_woods} that a dual GS wood is also a tuple of spanning trees of the dual map, though this is not explicitly stated in the definition. As before, we interpret the tuple $(W^*_1,...,W^*_d)$ in terms of colorings, and say that an arc $a$ of $G^*$ \emph{has a color $i$} if this arc is in $W_i^*$. 

\begin{definition}\label{def:dual-GS-woods}
Let $G$ be a $d$-map. A \emph{dual $d$-grand-Schnyder wood}, or \emph{dual $d$-GS wood} of $G^*$ is a tuple $\mathcal{W^*} = (W^*_1,...,W^*_d)$ of subsets of arcs of $G^*$ satisfying: \begin{itemize}
  \item[(W0\sups{*})] For each $i \in [d]$, every non-root vertex $v$ has exactly one outgoing arc with color $i$. The arc of $e_i^*$ with initial vertex $v^*$ has no color, and its opposite arc has only color $i$.
  \item[(W1\sups{*})] Every arc whose initial vertex is not $v^*$ has at least one color. Let $v$ be a non-root vertex with outgoing arcs $a_1,...,a_d$ with colors $1,...,d$, respectively. The arcs $a_1,...,a_d$ appear in clockwise order around $v$ (with the situation $a_i = a_{i+1}$ allowed).
  \item[(W2\sups{*})] Let $a$ be an arc with initial vertex $v \neq v^*$ and let $\de$ be the number of colors of $a$. If $a$ has color $i$, then the colors of the opposite arc $-a$ form a subset of $[i+1+m:i[$, where $m = \max(0, 1+d-\text{deg}(v)-\de)$.
  
\end{itemize}
\end{definition}

A dual $d$-GS wood is represented in Figure~\ref{fig:4_dual}(b). 
\begin{remark}
Condition (W2\sups{*}) of dual $d$-GS woods can be interpreted as a ``crossing condition'' for the trees $(W^*_1,...,W^*_d)$. Precisely, let $a$ be an arc in $W_i^*$ with terminal vertex $v\neq v^*$, and let $a_1,...,a_d$ be the arcs in $W_1^*,...,W_d^*$ with initial vertex $v$. Then (W2\sups{*}) states that $a$ appears strictly between $a_{i+m}$ and $a_i$, in clockwise order around $v$, where $m = \max(0, 1+d-\text{deg}(v)-\de)$ and $\de$ is the number of colors of $a$ (when $m=0$ this simply means $a\neq -a_i$).
\end{remark}

In the definition of dual $d$-GS woods the arcs with initial vertex $v^*$ are quite special. We will call \emph{outside arcs} the arcs with initial vertex $v^*$, and \emph{inside arcs} all the other arcs.

We now define a mapping $\Theta^*$ between the dual $d$-GS labelings and dual $d$-GS woods that is very similar to the mapping $\Theta$ defined in Section \ref{sec:statements}: given a dual $d$-GS labeling $\cL^*$, let $\Theta^*(\cL^*) = (W^*_1,\ldots,W^*_d)$ be the tuple such that each arc whose initial vertex is not $v^*$ has the color set $[i:j[$, where $i$ and $j$ are the labels of the corners that are respectively on its left and on its right, at its initial vertex. We do not assign colors to outside arcs.

\begin{prop}\label{prop:bij-dual-labeling-wood}
The mapping $\Theta^*$ is a bijection between the dual $d$-GS labelings of $G^*$ and the dual $d$-GS woods of $G^*$.
\end{prop}

\begin{proof}
Let $\cL^*$ be a dual $d$-GS labeling on $G^*$. First we show that $\Theta^*(\cL^*)$ is indeed a dual $d$-GS wood. The properties (W0\sups{*}) and (W1\sups{*}) are clear from (L0\sups{*}) $\sim$ (L2\sups{*}) plus the convention that $\Theta^*$ uses about outside arcs. 

It remains to verify (W2\sups{*}). Observe that if $a$ incident to the root-verted, then (W2\sups{*}) holds by construction, so we now assume that $a$ is not incident to the root-vertex.  Observe that the sum of counterclockwise jumps around $e$ is equal to $d$. 
This property follows from Lemma~\ref{lem:ccw-jumps-edges} via duality. 

Let $i_1,i_2,i_3,i_4$ be the labels in counterclockwise order around $a$ as indicated in Figure \ref{fig:labels-around-dual-edge}. The map $\Theta^*$ assigns to $a$ the colors $[i_2:i_3[$ and to $-a$ the colors $[i_4:i_1[$. The above observation implies that the intervals $[i_2:i_3[$ and $[i_4:i_1[$ are disjoint. Hence if $a$ has color $i$, then $[i_4:i_1[\subseteq[i+1:i[$. Moreover, if $a$ has $\delta$ colors, then (L3\sups{*}) implies that the label jump from $i_3$ to $i_4$ is at least $m = \max(0, 1+d-\text{deg}(v)-\de)$, so $[i_4:i_1[\subseteq[i+1+m:i[$. 
This completes the proof that $\Theta^*(\cL^*)$ has property (W2\sups{*}), hence is a dual GS wood.

\fig{width=0.3\linewidth}{labels-around-dual-edge}{The labels around a non-root edge.}

Now we show that $\Theta^*$ is a bijection. Injectivity is clear as it is easy to recover corner labels after the assignment of colors: the corners incident to the root-face $f_i^*$ all get label $i$ and the corners on the left and right side of an arc $a$ with color set $[p:q[$ will have labels $p$ and $q$, respectively. To prove surjectivity, we consider a tuple $\mathcal{W^*} = (W^*_1,\ldots,W^*_d)$ satisfying Conditions (W0\sups{*}-W2\sups{*}). We label the corners of $G^*$ according to the rule just mentioned, and want to show that the result $\cL^*$ is a dual $d$-GS labeling.
Note that Property (L0\sups{*}) holds by construction, and that (L2\sups{*}) follows directly from the first statement of (W1\sups{*}). Given the discussion in the previous paragraph, it is also easy to see that (L3\sups{*}) is a consequence of (W2\sups{*}). 

It remains to show that $\cL^*$ satisfies (L1\sups{*}). Let $\mathbf{v}, \mathbf{e}, \mathbf{f}$ be the number of vertices, edges and faces of $G^*$, respectively. Around a non-root vertex, the sum of clockwise jumps is $d$ by the second part of (W1\sups{*}), while that of the root-vertex $v^*$ is $d(d-1)$. The sum of counterclockwise jumps around each root-edge is $d$ by construction. Also, Condition (W0\sups{*}) implies that for a non-root edge the color sets of the two arcs are disjoint consecutive subsets of $[d]$, hence the sum of counterclockwise jumps around each non-root edge is also $d$. Therefore the sum of counterclockwise jumps around all edges is $d\mathbf{e}$, which is equal to the sum of clockwise jumps around all faces and all vertices. Consequently, the sum of clockwise jumps around all faces is equal to $d\mathbf{e} - d(\mathbf{v}-1)-d(d-1) = d(\mathbf{e}-\mathbf{v}+2-d) = d(\mathbf{f}-d)$, where the last equality is from Euler's relation. Note that $\mathbf{f}-d$ is exactly the total number of non-root faces. But since the sum of clockwise jumps around each root-face is 0 and that of each non-root face has to be a multiple of $d$, it is exactly $d$ for each non-root face. Hence $\cL^*$ satisfy (L1\sups{*}). This completes the proof that $\Theta^*$ is surjective, hence a bijection.
\end{proof}

The following theorem summarizes our results for dual $d$-GS structures. 

\begin{thm}\label{thm:dual-main}
Let $d \geq 3$ and let $G$ be a $d$-map. There exists a dual $d$-GS wood (resp. labeling) for $G^*$ if and only if $G^*$ is dual $d$-adapted. 

Moreover, for any fixed $d$, there is an algorithm which takes as input a dual $d$-adapted map and computes a dual $d$-GS wood (resp. labeling) in time linear in the number of vertices. 

Lastly, the set $\bL^*_G$ of dual $d$-GS labelings of $G^*$, the set $\bL_G$ of $d$-GS labelings of $G$, the set $\bW^*_G$ of dual $d$-GS woods of $G^*$, and the set $\bW_G$ of $d$-GS woods of $G$ are all in bijection.
\end{thm}

In fact, we can give a more direct description of the bijection between the dual $d$-GS woods of $G^*$ and the $d$-GS woods of $G$. This description shows that the dual $d$-GS woods is a tuple of spanning trees of $G^*$.

Recall the mapping $\Delta$ in Proposition~\ref{prop:existence-dual-labeling} between $d$-GS labelings of the primal map $G$ and dual $d$-GS labelings of the dual map $G^*$. By the above results, the map $\chi = \Theta^* \circ \Delta \circ \Theta^{-1}:\bW_G \to \bW^*_G$ is a bijection between the $d$-GS woods of $G$ and the dual $d$-GS woods of $G^*$. The local definition of these mappings around an inner edge $e$ is indicated in Figure~\ref{fig:primal_dual_woods}. 
Observe that, through $\chi$, the dual edge $e^*$ gets exactly the colors that the primal edge $e$ does not have. For outer edges/root-edges, the conventions were chosen so that the same property holds.

\fig{width=.7\linewidth}{primal_dual_woods}{The mappings $\Theta,\Theta^*$ and $\De$.}

For $i \in [d]$, let us call \emph{support} of $W^*_i$ the set of edges $\bar{W}^*_i$ of $G$ bearing the arcs in $W^*_i$. The \emph{support} $\bar{W}_i$ of the primal GS tree $W_i$ is defined similarly. By the above observation $\bar{W}^*_i$ is the \emph{dual-complement} of $\bar{W}_i$, that is, $\bar{W}^*_i$ is the set of edges which are dual to edges not in $\bar{W}_i$. 
It is well-known that the dual-complement of a spanning tree is also a spanning tree. Hence $\bar{W}^*_i$ is a spanning tree of $G^*$, for all $i\in [d]$. Moreover, by (W0\sups{*}), $v^*$ is the only vertex without an outgoing arc in $W^*_i$, so $W^*_i$ is a tree oriented toward the root-vertex $v^*$. To summarize: 

\begin{prop}\label{prop:primal_dual_woods}
Let $\cW = (W_1,...,W_d)$ be a GS wood of $G$. Then $\chi(\cW) = (W^*_1,...,W^*_d)$ is formed in the following way: for each $i \in [d]$, the support $\bar{W}^*_i$ of $W^*_i$ is the spanning tree of $G^*$ that is the dual-complement of the support of $W_i$: it contains edges of $G^*$ which are dual to the edges of $G$ not in $W_i$. $W^*_i$. Moreover, $W^*_i$ is obtained by orienting the tree $\bar{W}^*_i$ toward the root-vertex $v^*$ (every edge oriented from child to parent).
\end{prop}

Before closing this subsection let us observe that Lemma~\ref{lem:nb-color-arcs} for $d$-GS woods translates into the following property:

\begin{cor}\label{cor:nb-color-dual-arcs}
Let $G$ be a $d$-map, and $\cW^* = (W^*_1,...,W^*_d)$ be a dual $d$-GS wood of $G^*$. The total number of colors $n_e$ of a non-root edge $e = \{u,v\}$ satisfies $$2 ~\leq ~n_e ~\leq 2+(d-\deg(u))+(d-\deg(v)).$$ 
In particular, if $G^*$ is $d$-regular, then every non-root edge has exactly $2$ colors.
\end{cor}



\subsection{Dual of bipartite grand-Schnyder structures}\label{sec:dual-BGS} \hfill\\
In this subsection we investigate the bipartite case of dual GS structures. Let $d = 2b$ be an even integer, and let $G$ be a bipartite $2b$-map.  
Note that every vertex of the dual map $G^*$ has even degree.
As before we fix the bicoloring of the vertices of $G$ in black and white in which the outer vertex $v_1$ is black. The faces of the dual map $G^*$ are colored according to the colors of their corresponding primal vertices.


\begin{definition}\label{def:dual-BGS-labeling}  Let $G$ be a bipartite $2b$-map. A \emph{dual $b$-BGS labeling} of $G^*$ is a dual $2b$-GS labeling of $G^*$ such that the corners of black faces have odd labels, while the corners of white faces have even labels.
\end{definition}

Note that dual $b$-BGS labelings are exactly the dual, via $\Delta$, of $b$-BGS labelings defined in Section~\ref{sec:bipartite}. The parity condition is equivalent to requiring the label jump between consecutive corners in clockwise order around a vertex to be odd, and the label jump between consecutive corners in clockwise order around a face to be even.



Next we give the definition of dual bipartite GS woods. Under the bijection $\Theta^*$  between dual GS labelings and dual GS woods, the dual $b$-BGS labelings of $G$ are in bijection with the subclass of dual $2b$-GS woods satisfying the following condition: 
\begin{itemize}
  \item[($\dagger^*$)] \emph{For an inside arc, if the face on its right is black (resp. white), then it has exactly one more even (resp. odd) colors than odd (resp. even) colors.}
\end{itemize}

Let us call this subclass the \emph{even} dual $2b$-GS woods. Although this is not obvious, there is no loss of information in keeping only the information about even colors. In order to simplify the statements and analysis, we now make the further assumption that $G$ has no face of degree 2, or equivalently $G^*$ has no vertex of degree 2. Note that, under this asumption, no arc of $G^*$ can have $2b-1$ colors, hence no arc can have all the even colors.


\begin{definition}\label{def:dual-BGS-wood}
 Let $G$ be a bipartite $2b$-map having no face of degree 2. A \emph{dual $b$-BGS wood} of $G^*$ is a tuple $\mathcal{W'^*} = (W_1'^*,...,W_b'^*)$ of subsets of arcs of $G^*$ satisfying: \begin{itemize}
  \item[(BW0\sups{*})] For each $i \in [b]$, every non-root vertex $v$ has exactly one outgoing arc with color $i$. For all $i\in[b]$, the arc of the root-edge $e_{2i}^*$ going toward $v^*$ has only color $i$. The other arcs of root-edges have no color.
  
  \item[(BW1\sups{*})] Every inside arc with a black face on its right is in at least one color. 
  Let $v$ be a non-root vertex with outgoing arcs $a_1',...,a_b'$ with colors $1,...,b$, respectively. 
  The arcs $a_1',...,a_b'$ appear in clockwise order around $v$.
  
  \item[(BW2\sups{*})] Let $a$ be an inside arc with initial vertex $v$ and let $\de$ be the number of colors of $a$.

  If $a$ has a black (resp. white) face on its right and has color $i$, then the colors of the opposite arc $-a$ form a subset of $[i+1+m: i[$, where $m = \max(0, 1+b-\deg(v)/2-\de)$ (resp. $m = \max(0, b-\deg(v)/2-\de)$).

  If $a$ has a white face on its right and has no color, but is between the outgoing arcs of colors $i$ and $i+1$ in clockwise order around $v$, then the colors of the opposite arc $-a$ form a subset of $[i+1+m:i]$, where $m = \max(0, b-\deg(v)/2)$.
  
\end{itemize}
\end{definition}

\begin{lemma}\label{lem:reduced-dual-wood}
Let $G$ be a bipartite $2b$-map having no face of degree 2. For an even dual $2b$-GS wood $(W_1^*,...,W_{2b}^*)$, we define $\Lambda(W_1^*,...,W_{2b}^*) = (W_2^*,...,W_{2b}^*)$. Then, $\Lambda^*$ is a bijection between even dual $2b$-GS woods and dual $b$-BGS woods.
\end{lemma}

\begin{proof}
First we show that for any even dual $2b$-GS wood $\cW^*=(W_1^*,...,W_{2b}^*)$, the image  $\cW'^*=(W_1'^*,...,W_{b}'^*)=\Lambda(W_1^*,...,W_{2b}^*)$ is a $b$-BGS wood.
It is easy to check that $\cW'^*$ satisfies (BW0\sups{*}) and (BW1\sups{*}).  
For (BW2\sups{*}), there are three cases to check. Let us first consider an inside arc $a$ which has at least one even color $2i$ in $\cW^*$. If $a$ has a black face on its right, let $2\de-1$ be the number of colors of $a$, where $\de \geq 1$ is the number of even colors. By (W2\sups{*}), the colors of $-a$ in $\cW^*$ form a sub-interval of $[2i+1+k:2i[$, where $k = \max(0, 2+2b-\deg(v)-2\de)$ and $v$ is the initial vertex of $a$. Hence the colors of $-a$ in ${\cW'}^*$ form a sub-interval of $[i+1+m:i]$, where $m = k/2$. Hence $a$ satisfies (BW2\sups{*}). 
The argument is similar for an arc with a white face on its right and at least one even color. 
Lastly, consider an arc $a$ with no even color which is between outgoing arcs of even colors $2i$ and $2i+2$ in $\cW^*$. The arc $a$ has a single color $2i+1$ and has a white face on its right. By (W2\sups{*}), the colors of $-a$ form a sub-interval of $[2i+2+k:2i+1[$, where $k = \max(0, 2b-\deg(v))$. Hence the colors of $-a$ in $\cW'^*$ form a sub-interval of $[i+1+k/2:i]$, which is (BW2\sups{*}). Hence $\cW'^*$ satisfies (BW2\sups{*}) and is a $b$-BGS wood.

The map $\Lambda^*$ is injective because the colors assigned by $\Lambda^*$ correspond to the original even colors, and we can recover the odd colors from the even colors. 
Indeed, if an arc $a$ has a black face on its right, then it has at least one even color, but not all even colors (as noted above), so the odd colors can be recovered. Once the colors for the arcs with a black face on their right are known, this determines the colors for the arcs with a white face on their right by Condition (W1\sups{*}).


  Finally we prove surjectivity. Let $\cW'^*=(W_1'^*,...,W_b'^*)$ be a $b$-BGS wood of $G^*$. It is easy to see that the recovery rule layed out in the previous paragraph is still well-defined. Moreover, the result $\cW^*$ of this operation clearly satisfies (W0\sups{*}) and (W1\sups{*}) as well as the evenness condition~($\dagger^*$).  It remains to check Condition (W2\sups{*}) for every inside arc $a$.
There are again three cases (depending on whether $a$ has a color, and has a black or white face on its right), and one can check that Condition (BW2\sups{*}) for $\cW'^*$ translates into Condition (W2\sups{*}) for $\cW^*$ for each case.
This completes the proof that $\Lambda^*$ is surjective, hence a bijection.
\end{proof}

We now summarize our main result for dual bipartite GS structures.

\begin{thm}\label{thm:dual-BGS-main}
  Let $b \geq 2$ and let $G$ be a  bipartite $2b$-map with no face of degree 2. There exists a dual $b$-BGS wood (resp. labeling) for its dual map $G^*$ if and only if $G$ is $2b$-adapted.

  Moreover for any fixed $b$, there is an algorithm which takes as input a bipartite $2b$-adapted map and computes a dual $b$-BGS wood (resp. labeling) for $G^*$ in linear time.

  Lastly, the set $\mathbf{BL}_G^*$ of dual $b$-BGS labelings of $G^*$, the set $\mathbf{BL}_G$ of $b$-BGS labelings of $G$, the set $\mathbf{BW}_G^*$ of dual $b$-BGS woods of $G^*$, and the set $\mathbf{BW}_G$ of $b$-BGS woods of $G$ are all in bijection.
\end{thm}


%


\section{Proofs of GS-woods properties, and of the bijection with GS-labelings}\label{sec:remaining-proofs}

In this Section we prove Lemma~\ref{lem:beam-of-paths} about $d$-GS woods and establish some further consequences. Then, we prove Proposition~\ref{prop:bij-theta} establishing the bijection between $d$-GS woods and $d$-GS labelings, and some easy corollaries.

\subsection{Proof of Lemma~\ref{lem:beam-of-paths} and consequences}
\begin{proof}[Proof of Lemma~\ref{lem:beam-of-paths}]
Let $i,j$ be distinct colors in $[d]$. Let us prove that $P_i(v)$ and $P_j(v)$ are non-crossing for all vertices $v$ of $G$, with the convention that $P_i(v)$ is reduced to a vertex when $v$ is an outer vertex. We make an induction on $|P_i(v)|+|P_j(v)|$, where $|P|$ denotes the length of the path~$P$. The base case $|P_i(v)|+|P_j(v)|=0$ (outer vertices) is trivial. Let $v$ be an inner vertex. If the paths $P_i(v)$ and $P_j(v)$ have no common inner vertices beside $v$, then these paths are non-crossing. Assume now that there are other common inner vertices, and let $u$ be the first inner vertex on $P_i(v)$ which belongs to $P_j(v)$ and is different from $v$. By induction, we can assume that $P_i(u)$ and $P_j(u)$ are non-crossing, and we need to examine what happens at $u$.

If the first arc of $P_i(v)$ and $P_j(v)$ is equal (with endpoint $u$), then it is clear that $P_i(v)$ and $P_j(v)$ are non-crossing. Suppose now that the first arcs of $P_i(v)$ and $P_j(v)$ are different. 
The situation is represented in Figure~\ref{fig:crossing-pathsb}.
Let $P_i$ (resp. $P_j$) be the part of $P_i(v)$ ($P_j(v)$) from $v$ to~$u$. The union of $P_i$ and $P_j$ forms a simple cycle $C$. For concreteness, let us assume that 
 the finite region enclosed by $C$ is on the right of $P_i$ and on the left of $P_j$ (the other case being treated in the exact same manner). By Condition (W1), for all $k\in [i+1:j[$, the path $P_k(v)$ starts \emph{weakly inside} of $C$ (that is, on $C$ or strictly inside of $C$), and we consider the initial portion $P_k$ of $P_k(v)$ before the first arc strictly out of $C$.
Observe that $P_{i+1}$ ends on $P_j$ (since, by Condition (W2) $P_{i+1}(v)$ cannot cross $P_i(v)$ from right to left), hence $P_{i+2}$ ends on $P_j$ (since $P_{i+2}(v)$ cannot cross $P_{i+1}(v)$ from right to left), etc. Hence the path $P_k$ ends on $P_j$ for all $k\in [i+1:j[$. By a symmetric argument (starting with $P_{j-1}$), the path $P_k$ ends on $P_i$ for all $k\in [i+1:j[$. In conclusion, all the paths $P_k$ end at $u$. For $k\in[i:j+1[$, let $a_0^k$ be the last arc of $P_{k}(v)$ before $u$ and let $a_1^k$ be the following arc. Since $a_0^k$ is weakly inside $C$ for all $k\in [i:j+1[$, and $a_1^k$ is strictly outside $C$ for all $k\in [i+1:j[$, 
Condition (W2) implies that the arcs $a_0^i,a_1^i,a_1^{i+1},\ldots,a_1^{j},a_0^j$ appear in clockwise order (weakly) around~$u$. From this and the fact that $P_i(u)$ and $P_j(u)$ are non-crossing it is not hard to see that $P_i(v)$ and $P_j(v)$ are non-crossing (indeed, if $P_i(u)$ was to enter $C$ by crossing $P_j(v)$, then it would have to cross $P_j(v)$ again to exit $C$ and this would violate the induction hypothesis).
\fig{width=.3\linewidth}{crossing-pathsb}{The situation in the proof of Lemma~\ref{lem:beam-of-paths}.}

It remains to prove the statement about the clockwise order of $r_1(v),r_2(v),\ldots,r_d(v)$. This amounts to proving that for $i<j<k$ in $[d]$, the vertices $r_i(v),r_j(v)$ and $r_k(v)$ are (weakly) in clockwise order around the outer face of $G$. This easily follows from the non-crossing property together with Condition (W1), and we omit the details.
\end{proof}


\begin{lemma}\label{lem:vi-not-in-Ri}
Let $G$ be a $d$-map and let $\cW$ be a $d$-GS wood (or a $d$-tuple of sets of arcs only satisfying Conditions (W0-W2)). Let $v$ be an inner vertex. For $i$ in $[d]$, let $r_i(v)$ be the endpoint of the path $P_i(v)$ of color $i$ starting at $v$. For all $i\in [d]$, the outer vertex $v_i$ appears strictly between $r_{i}(v)$ and $r_{i-1}(v)$ in clockwise order around the outer face of $G$. Equivalently, $v_i$ does not belong to the region $R_i(v)$.
\end{lemma}
Lemma~\ref{lem:vi-not-in-Ri} is represented in Figure~\ref{fig:beam-of-paths}(b).

\begin{proof}
For some outer vertices $u_1,\ldots, u_k$, we say that $u_1,\ldots, u_k$ \emph{appear clockwise} to mean that they are appear in clockwise order weakly around the outer face of $G$. Suppose for contradiction that Lemma~\ref{lem:vi-not-in-Ri} does not hold. For concreteness, let us suppose that $r_{d}(v), v_1, r_{1}(v)$ appear clockwise.
Since $r_1(v)\neq v_1,v_2$ and $r_d(v)\neq v_d,v_1$ by Condition (W0), this implies that $r_d(v), v_d, v_1,v_2, r_1(v)$ appear clockwise. Since $r_2(v)\neq v_2,v_3$, Lemma~\ref{lem:beam-of-paths} implies that $r_d(v), v_d, v_1,v_2, v_3, r_2(v)$ appear clockwise. Continuing in this manner, we get that $r_d(v), v_d, v_1,v_2, v_d, r_{d-1}(v)$ appear clockwise. This gives $r_d(v)=v_d$, which contradicts (W0).
\end{proof}

\begin{cor} \label{cor:containment-regions}
Let $G$ be a $d$-map and let $\cW$ be a $d$-GS wood (or even if these sets of arcs only satisfy Conditions (W0-W2)). Let $v,v'$ be distinct inner vertices of~$G$. If $v'$ belongs to the region $R_i(v)$ for some $i\in[d]$, then $R_{i}(v')$ is contained in $R_{i}(v)$, and $R_i(v')\neq R_i(v)$.
\end{cor}

\begin{proof}
Suppose that $v'$ belongs to $R_i(v)$. Observe that if the paths $P_i(v')$ and $P_i(v)$ have a vertex in common, then these paths will ``merge'' and $r_{i}(v')=r_{i}(v)$. Moreover, Condition (W2) implies that $P_{i}(v')$ cannot cross $P_{i-1}(v)$ from right to left. Hence, the path $P_i(v')$ stays inside $R_{i}(v)$. Similarly, $P_{i-1}(v')$ stays inside $R_{i}(v)$. 
In particular, the endpoints $r_{i-1}(v')$ and $r_{i}(v')$ are both between $r_{i-1}(v)$ and $r_i(v)$ in clockwise order around the outer face of $G$. Furthermore, Lemma~\ref{lem:vi-not-in-Ri} implies that the vertices $r_{i-1}(v),\,r_{i-1}(v'),\,r_{i}(v'),\,r_i(v)$ appear in this order clockwise around the outer face of $G$. Thus $R_{i}(v')$ is contained in $R_{i}(v)$. Lastly, $R_i(v')\neq R_i(v)$ because otherwise one of the paths $P_{i-1}(v')$ or $P_i(v')$ would have to go through $v$, which is impossible by Condition (W2) at $v$.
\end{proof}

\subsection{Proof of Proposition~\ref{prop:bij-theta} and consequences.}
We will now prove Proposition~\ref{prop:bij-theta} and Lemma~\ref{lem:W2'}.
\begin{lemma}\label{lem:image-theta}
Let $G$ be a $d$-map. For any $d$-GS labeling $\cL$ of $G$, the image $\Th(\cL)$ is a $d$-GS wood.
\end{lemma}

\begin{proof}
Let $\cL\in \bL_G$, and let $\cW=(W_1,\ldots,W_d)=\Th(\cL)$ be the corresponding arc coloring (where $W_i$ is the set of arcs of color $i$).

In $\cL$ the sum of label jumps in clockwise order around any inner vertex is $d$, hence the inner vertices of $G$ are incident to one outgoing arc of color $i$ for all $i\in [d]$, and the colors are not all on the same arc. 
Moreover, by definition of $\Th$, the outer vertex $v_k$ of $G$ is incident to one outgoing arc of color $i$ for all $i\neq k$, and no outgoing arc of color $k$. Lastly, it is easy to see that Property (L0) of $\cL$ implies that $v_i$ and $v_{i+1}$ are not incident to ingoing arcs of color $i$. Hence $\cW$ satisfies (W0).

For any inner vertex $v$, the sum of label jumps in clockwise order around $v$ is $d$. This implies that the outgoing arcs of color $1,2,\ldots,d$ at $v$ appear in clockwise order around $v$. Hence $\cW$ satisfies (W1).

Next, we show that $\cW$ satisfies Condition (W2). 
Let $a$ be an inner arc of $G$ oriented from $u$ to $v$. We assume that $a$ has color $i$ in $\cW$, and that $v$ is an inner vertex. 
We want to show that $a$ appears strictly between the outgoing arc of color $i+1$ and the outgoing arc of color $i-1$ around $v$. 
Let $i_1,i_2,i_3,i_4$ be the labels in counterclockwise order around $a$ as indicated in Figure~\ref{fig:labels-around-edge}.
 Since the arc $a$ has color $i$, we have $i\in [i_1:i_2[$. Moreover, since the label jumps are all positive around faces by Condition (L2), we have $i-1,i, i+1\in [i_4:i_1[\cup [i_1:i_2[\cup [i_2:i_3[$. By Lemma~\ref{lem:ccw-jumps-edges}, the sum of label jumps counterclockwise around the arc $a$ is equal to $d$, hence the sets $[i_4:i_1[$, $[i_1:i_2[$, $[i_2:i_3[$, and $[i_3:i_4[$ are disjoint (and give a partition of $[d]$). This implies that the arcs of color $i-1$, $i$ and $i+1$ appear in this order in clockwise order around $v$ starting at the corner labeled $i_4$ and ending at the corner labeled $i_3$, which proves that the arc $a$ satisfies (W2).

\fig{width=.3\linewidth}{labels-around-edge}{Corner labels around an edge.}

It remains to show that $\cW$ satisfies (W3). Consider an inner arc $a$ oriented from $u$ to $v$. Let $f$ be the face at the right of $a$. Let $i_1,i_2,i_3,i_4$ be the labels in counterclockwise order around $a$ as indicated in Figure~\ref{fig:labels-around-edge}. 
Suppose that $a$ has color $i$, or that $a$ is strictly between the outgoing arcs of color $i$ and $i+1$ in clockwise order around $u$. Suppose also that the number $\eps:=|[i_3:i_4[|$ of colors of the arc $-a$ satisfies $d-\deg(f)-\eps>0$. 
In order to prove that the arc $a$ satisfies Condition~(W3), it suffices to prove that $[i:i+2+d-\deg(f)-\eps[\subseteq [i_4:i_3[$. Since, by Lemma~\ref{lem:ccw-jumps-edges}, the sum of label jumps counterclockwise around $a$ is $d$, we have $[i_4:i_3[=[i_4:i_2[\cup [i_1:i_3[$. Moreover, under our hypotheses, $i\in[i_4:i_2[$, and $[i+1:i+1+\delta[\subseteq [i_1:i_3[$, where $\delta=|[i_2:i_3[|$.
Lastly, by Condition (L3) we have $\delta\geq d+1-\deg(f)-\eps$, hence $[i :i+2+d-\deg(f)-\eps[\subseteq [i_4:i_3[$. This concludes the proof that Condition (W3) holds. Thus, $\cW$ is a $d$-GS wood.
\end{proof}

\begin{lemma}\label{lem:image-theta-inverse}
Let $G$ be a $d$-map. For any $d$-GS wood $\cW$ of $G$, the image $\bTh(\cW)$ is a $d$-GS labeling.
\end{lemma}

\begin{proof}
Let $\cW$ be a $d$-GS wood of $G$, and let $\cL=\bTh(\cW)$. Condition (L0) holds for $\cL$ by definition of $\bTh$. Moreover it is clear that Condition (W1) for $\cW$ implies that the sum of label jumps clockwise around inner vertices is always $d$. In order to establish that $\cL$ satisfies (L1) and (L2) we need to establish two technical results about the label situation around inner edges. 

\noindent \textbf{Claim 1:} For every inner edge $e$ of $G$, the 4 corners incident to $e$ cannot all have the same label.

Suppose, for contradiction, that all 4 corners incident to $e$ have the same label $j$. Let $a$ be an arc of $e$, and let $u$ and $v$ be the initial and terminal vertices respectively. Let $i=j-1$. By definition, the arc $a$ (resp. $-a$) belongs to none of the sets $W_1,\ldots, W_d$ but appears between the outgoing arc in $W_{i}$ and $W_{i+1}$ in clockwise order around $u$ (resp. $v$). This contradicts Condition (W3) for $a$.

\noindent \textbf{Claim 2:} Let $a$ be an inner arc of $G$, and let $i_1,i_2,i_3,i_4$ be the labels of the incident corners as indicated in Figure~\ref{fig:labels-around-edge}. The sum of label jumps in counterclockwise order around $a$ is $d$, and moreover $i_2\neq i_3$ and $i_1\neq i_4$.

Let us first prove Claim 2 in the case $i_1\neq i_2$. If $i_1\neq i_2$ then by property (W2) of $\cW$ we have $[i_1:i_2[\cap [i_3:i_4[=\emptyset$, hence the sum of label jumps in counterclockwise order around $a$ is~$d$. Moreover, still by property (W2), $i_2\neq i_3$ and $i_1\neq i_4$ so Claim 2 holds. 
By symmetry, if $i_3\neq i_4$, then Claim 2 holds.
Lastly, if $i_1=i_2$ and $i_3=i_4$, then $i_1=i_2\neq i_3=i_4$ by Claim 1, which implies again that the sum of label jumps in counterclockwise order around $e$ is $d$, and Claim 2 holds again.

Claim 2 trivially implies that $\cL$ satisfies (L2). Moreover, Claim 2 implies that the sum of label jumps clockwise around every inner face is at least $d$ (since it is a multiple of $d$ and cannot be 0). Next, we use Equation~\eqref{eq:sum-jumps-relation} between the sum of label jumps around vertices, edges and faces.
Using Claim 2, we get
\begin{equation}\label{eq:sum-jumps-relation2}
d(|V|+|F|)\leq \sum_{v\in V}\cwjump(v)+\sum_{f\in F}\cwjump(f) =d+\sum_{e\in E}\ccwjump(e)=d(1+|E|),
\end{equation}
where $V,F,E$ are the set of inner vertices, faces, and edges of $G$ respectively. By the Euler relation we have $|V|+|F|=1+|E|$, hence the inequality in~\eqref{eq:sum-jumps-relation2} is an equality. Thus the sum of label jumps clockwise around every inner face is $d$. This complete the proof that $\cL$ satisfies (L1).

It remains to prove that $\cL$ satisfies (L3). Consider an inner arc $a$ oriented from $u$ to $v$ with incident corners labeled $i_1,i_2,i_3,i_4$ as indicated in Figure~\ref{fig:labels-around-edge}. Let $f$ be the face at the right of $a$, let $\delta=|[i_2:i_3[|$ and let $\eps=|[i_3:i_4[|$. We want to show $\delta+\eps\geq d-\deg(f)+1$.
If $d-\deg(f)-\eps<0$, then this inequality clearly holds (since $\delta>0$). Suppose now that $d-\deg(f)-\eps\geq 0$, and consider Consider (W3) of $\cW$. If $i_1\neq i_2$, then Condition (W3) applied to color $i=i_2-1$ of $a$ gives $\delta\geq d-\deg(f)-\eps+1$ as wanted. If $i_1=i_2$, then $a$ is between the outgoing arcs of $W_i$ and $W_{i+1}$ around $u$ for $i=i_2-1$ and we also get $\delta\geq d-\deg(f)-\eps+1$ as wanted. This shows that $\cL$ satisfies (L3), which completes the proof that $\cL$ is a $d$-GS labeling.
\end{proof}

\begin{proof}[Proof of Proposition~\ref{prop:bij-theta}] Let $G$ be $d$-map. 
By Lemmas~\ref{lem:image-theta} and~\ref{lem:image-theta-inverse}, $\Th$ is a map from $\bL_G$ to $\bW_G$, and $\bTh$ is a map from $\bW_G$ to $\bL_G$. It is easy to see that $\bTh\circ \Th=\Id_{\bL_G}$ and $\Th\circ \bTh=\Id_{\bW_G}$. Thus these are inverse bijections. 
\end{proof}

Before closing this section, we prove Lemmas~\ref{lem:nb-color-arcs} and~\ref{lem:W2'} as corollaries of Proposition~\ref{prop:bij-theta}.

\begin{proof}[Proof of Lemma~\ref{lem:nb-color-arcs}]
Let $G$ be a $d$-map, let $e$ be an inner edge, and let $f$ and $f'$ be the faces incident to $e$. Let $n_e$ be the number of colors of $e$. Consider the $d$-GS labeling $\cL=\bTh(\cW)$. By Lemma~\ref{lem:ccw-jumps-edges}, the sum of label jumps in counterclockwise order around $e$ is $d$. This is represented in Figure~\ref{fig:edge-jumps}, and we refer to this figure to define the label jumps $\delta, \eps,\delta',\eps'$ around $e$. The number of colors of $e$ in $\cW$ is 
$$n_e=\eps+\eps'=d-\delta-\delta'.$$ 
Moreover, $1\leq \delta\leq d-\deg(f)+1$ and $1\leq \delta'\leq d-\deg(f')+1$ because the sum of label jumps around faces is $d$ and each jump is at least 1. This gives $\deg(f)+\deg(f')-d-2\leq n_e\leq d-2$ as claimed.
\end{proof}
 
\begin{proof}[Proof of Lemma~\ref{lem:W2'}]
Let $G$ be a $d$-adapted map, and let $\bW_G'$ be the set of maps satisfying Conditions (W0), (W1) and (W2'). We clearly have the inclusion $\bW_G\subseteq \bW_G'$ and want to show $\bW_G'= \bW_G$. 

Let $\bTh'$ be the extension of the map $\bTh$ to $\bW_G'$ (with the same definition as $\bTh$). Let $\cW\in \bW_G'$ and let $\cL=\bTh'(\cW)$. We want to show that $\cL$ is in $\bL_G$. We reason as in the proof of Lemma~\ref{lem:image-theta-inverse}, except we cannot assume that $\cW$ satisfies (W3). In the proof of Lemma~\ref{lem:image-theta-inverse}, (W3) was used to justify Claim 1, and to justify that $\cL$ satisfies (L3). Let us now give an alternative proof of Claim 1, which does not use (W3). 

Suppose for the sake of contradiction, that an inner edge $e$ has its 4 incident corners labeled $i$. Since $G$ is $d$-adapted, it has no loop, hence $e$ has 2 distinct endpoints $u,v$. Note that $u$ and $v$ cannot both be outer vertices by Condition (L0). Hence, we can suppose that $v$ is an inner vertex without loss of generality. Consider the paths $P_1(v),\ldots,P_d(v)$ of colors $1,2,\ldots,d$ starting at $v$ as defined in Section~\ref{subsec:GS-woods}. Let $R_i(v)$ be the region delimited by the paths $P_{i-1}(v)$ and $P_i(v)$. Since the corners incident to $e$ have label $i$, the edge $e$ is in $R_i(v)$, hence $u$ is in $R_i(v)$. By Lemma~\ref{lem:vi-not-in-Ri} this implies that $u\neq v_i$. Since $u$ is incident to some corners labeled $i$, we conclude that $u$ is not an outer vertex. Hence, both $u$ and $v$ are inner vertices. Moreover, $u$ is in $R_i(v)$, and symmetrically $v$ is in $R_i(u)$.
By Corollary~\ref{cor:containment-regions}, the region $R_i(u)$ is strictly contained in $R_i(v)$, and $R_i(v)$ is strictly contained in $R_i(u)$. This is a contradiction, hence Claim 1 holds.

It remains to prove that $\cL$ satisfies (L3). Let $a$ be an inner arc of $G$. 
We need to show $\delta+\eps\geq d-\deg(f)+1$. If $i_1\neq i_2$ then Condition (W2') applied to color $i=i_2-1$ of $a$ gives $\delta\geq d-\deg(f)-\eps+1$ as wanted. We now consider the case $i_1=i_2$. Let $f'$ be the face at the left of $a$. Since $G$ is $d$-adapted, we must have $\deg(f)+\deg(f')-2\geq d$ (because $\deg(f)+\deg(f')-2$ is the length of a non-facial cycle of $G$: the contour of the face one would obtain by deleting $e$ and merging $f$ and $f'$). Moreover, $|[i_4:i_1[|\leq d-\deg(f')+1$ because the sum of label jumps around $f'$ is $d$ and each jump is at least 1.
Since the sum of label jumps in counterclockwise order around $a$ is $d$, one obtains
$$\delta+\eps=d-|[i_4:i_1[|\geq \deg(f')-1\geq d-\deg(f)+1.$$
Thus Condition (L3) holds and $\cL$ is in $\bL_G$.

Since the image of $\bTh'$ is in $\bL_G$, we can compose this map with $\Th$. For all $\cW\in \bW_G'$ one clearly has $\Th \circ \bTh'(\cW)=\cW$, hence $\cW$ is in the image of $\Th$, which is in $\bW_G$ by Lemma~\ref{lem:image-theta}. This concludes the proof that $\bW_G'= \bW_G$. 
\end{proof}


\section{Proof of the existence of grand-Schnyder structures}\label{sec:proof-existence} 

In this section we complete the proof of Theorem~\ref{thm:main}. Since we have already established the bijection between the different incarnations of grand-Schnyder structures (woods, labelings, marked orientations, and angular orientations), it suffices to show that a $d$-map $G$ admits a $d$-GS angular orientation if and only if $G$ is $d$-adapted.

The proof that $d$-adaptedness is a necessary condition for the existence of $d$-GS angular orientation is based on a simple counting argument.
Suppose that a $d$-map $G$ admits a $d$-GS angular orientation. Let $C$ be a non-facial simple cycle of $G$. We want to show that $C$ has length at least $d$.
Recall from Lemma~\ref{lem:ingoing-weight-angular} that the total weight $\om(C)$ of the arcs strictly inside $C$ with initial vertex on $C$ is
$$\om(C)=\ell(C)-d+\sum_{a\in C}(d-\deg(f_a)),$$
where $\ell(C)$ is the length of $C$, the sum is over the arcs of $C$, and for an arc $a$ of $C$ the face incident to $a$ inside $C$ is denoted by $f_a$.
Observe now for each inner face $f$ of $G$ inside $C$ and incident to $k\geq 1$ edges on $C$, there are at least $k+1$ corners of $f$ incident to a vertex on $C$. 
The weight condition (A1) then ensures that the total contribution to $\om(C)$ of the edges incident to $v_f$ is at least $k(d-\deg(f))$. Hence, $\om(C)\geq \sum_{a\in C}(d-\deg(f_a))$, so that $\ell(C)\geq d$.

This prove that $d$-adaptedness is a necessary condition for the existence of $d$-grand-Schnyder structures. In order to complete the proof of Theorem~\ref{thm:main}, it remains to prove that any $d$-adapted map admits a $d$-GS orientation and that such an orientation can be computed \emph{in linear time}. Here and in the rest of this section we say that a structure for $d$-adapted maps can be computed \emph{in linear time} if it can be computed in a number of operations which is linear in the number of vertices of the $d$-adapted map. It should be noted that for $d$-adapted maps, the number of edges $\ee$ is linear in the number of vertices $\vv$ (it is easy to show that $\vv\leq \ee\leq 6\vv$ using the fact that there cannot be 3 edges with the same endpoints), and that the bijections between the different incarnations of $d$-GS structures described in Section~\ref{sec:incarnations} can be performed in linear time.\footnote{In~\cite{OB-EF:Schnyder} an existence proof was given for the special case of Schnyder decompositions (which correspond to $d$-GS structures on $d$-angulations) by using a ``min-cut max-flow'' type of argument. It may be possible to adapt this argument in the general case, however the proof in the present article is different: it is constructive and can be used to define a linear time algorithm for computing $d$-GS structures, thereby answering a question which was left open for Schnyder decompositions in~\cite{OB-EF:Schnyder}.}

The existence proof is by induction on~$d$. 
Since the induction step is rather technical, we will first present the main ideas on a simple case: we will explain how to deduce the existence of transversal structures (for 4-adapted triangulations of the square), from the existence of Schnyder woods (for 3-adapted triangulations).

\subsection{A preview of the existence proof: existence of 4-GS angular orientations}\label{sec:special-case-proof-exist}

Let us fix a 4-adapted map $G$ such that the inner faces have degree 3 or 4 (no face of degree 2). We want to construct a 4-GS angular orientation of $G$. The process is represented in Figure~\ref{fig:4-GS-existence}.

\fig{width=.85\linewidth}{4-GS-existence}{Construction of a 4-GS angular orientation $\cA$ for a 4-adapted map $G$. (a) The map $G$. (b) 
The triangulation $\bGb$ with a 3-GS orientation. Restricting the 3-GS orientation to $\Gb$ gives an orientation $\cBb$ such every inner-vertex has outdegree 3. (c) A spanning tree $T$ of $\Gb$ oriented from the outer vertices to the inner vertices, and the bijection between the edges in $\Gb\setminus T$ and the faces of $\Gb$. (d) Construction of the angular orientation $\hAb$ from $\cBb$. (e) The 4-GS angular orientation $\cAb$ closely related to $\hAb$ (the differences with $\hAb$ are highlighted). (f) Construction of the 4-GS angular orientation $\cA$ from~$\cAb$.}

Let $\Gb$ be the triangulation obtained from $G$ by adding a vertex $u_f$ in each inner face $f$ of degree~4 and joining $u_f$ to each of the vertices of $f$, as represented in Figure~\ref{fig:4-GS-existence}(b). It is easy to see that $\Gb$ is 4-adapted. By adding a vertex in the outer face of $\Gb$ and joining it to the outer vertices of $\Gb$, one gets a 3-adapted triangulation $\bGb$. Since this triangulation admits a Schnyder wood, we can obtain the following structure for $\Gb$: an orientation $\cBb$ of the inner edges of $\Gb$ such that every inner vertex has outdegree 3. See Figure~\ref{fig:4-GS-existence}(b).

An important observation is that $\cBb$ is \emph{co-accessible}: for every inner vertex $v$ of $\Gb$ there exists a directed path in $\cBb$ from an outer vertex of $\Gb$ to $v$. We will justify this claim in Section~\ref{sec:induction-step}, and simply mention that this is a consequence of the fact that $\Gb$ is 4-adapted. As a consequence, there exists a spanning tree $T$ of $\Gb$ containing all the outer edges except $\{v_1,v_2\}$, such that the inner edges in $T$ form directed paths from the outer vertices to the inner vertices. See Figure~\ref{fig:4-GS-existence}(c). 

Next, we use $T$ to construct a 4-GS angular orientation of $\Gb$ as represented in Figure~\ref{fig:4-GS-existence}(c-d). We will use a bijection between the edges of $\Gb\setminus T$ and the inner faces of $\Gb$. This is a standard correspondence, that we now recall.

\begin{claim}\label{claim:inside-face}
Let $M$ be a plane map, and let $T$ be a spanning tree of $M$. To each edge $e$ in $M\setminus T$ we associate the face $\varphi(e)$ incident to $e$ which is enclosed by the unique cycle in $T\cup \{e\}$. The mapping $\varphi$ is a bijection between the set of edges in $M\setminus T$ and the inner faces of $M$.
\end{claim}
\begin{proof} The claim is classical, so we only sketch the proof idea here. The dual of the edges in $M\setminus T$ forms a spanning tree $T^*$ of the dual map $M^*$. We take the vertex of $M^*$ in the outer face to be the root of $T^*$. Then, the mapping $\varphi$ can be interpreted as the correspondence between the non-root vertices of $T^*$ and the edges of $T^*$ (each vertex is associated to its parent edge).
\end{proof}

Let $\Gb^+$ be the angular map of $\Gb$. We now describe an orientation of the star edges of $\Gb^+$.
Let $f$ be a face of $\Gb$, and let $e$ be the edge of $\Gb\setminus T$ such that $f=\varphi(e)$, where $\varphi$ is the bijection described in Claim~\ref{claim:inside-face}. Let $u$ be the terminal vertex of $e$ in the orientation $\cBb$ (by convention we orient the outer edge $\{v_1,v_2\}$ toward $v_1$) and let $v,w$ be the other vertices of $\Gb$ incident to $f$. We orient the star edges incident to the star vertex $v_{f}$ as follows: the star edge between $v_{f}$ and $u$ is oriented toward $u$ while the 2 others are oriented toward $v_{f}$. Let $\hAb$ be the resulting orientation of the star-edges of $\Gb^+$ (we forget the orientations of the original edges of $\Gb^+$). This is represented in Figure~\ref{fig:4-GS-existence}(c-d).

It is not hard to check that the original inner vertices of $\Gb$ have outdegree~4 in $\hAb$ (and we delay the proof to Section~\ref{sec:induction-step}). 
Hence, the orientation $\hAb$ can be identified with a $4$-GS angular orientation of $\Gb$: the star edges have weight 1, the original edges have weight 0, the star vertices have weight 1 and the original vertices have weight~4. 
More precisely, we see that $\hAb$ satisfy condition (A1) and (A2) of angular orientation. Up to a simple adjustment of the orientation of the star-edges incident to the outer vertices of $\Gb$, one can obtain a 4-GS angular orientation $\cAb$ of $\Gb$. 
See Figure~\ref{fig:4-GS-existence}(e), and the proof in Section~\ref{sec:induction-step}. 

Lastly, we use $\cAb$ to construct a 4-GS angular orientation $\cA$ of $G$ as represented in Figure~\ref{fig:4-GS-existence}(e-f).
Let $G^+$ be the angular map of $G$. Recall that an angular orientation of $G$ must satisfy the following conditions: 
\begin{compactitem}
\item the weight of original inner vertices is~4, 
\item the weight of any star vertex inside a face of $G$ of degree~$3$ (resp.~4) is~1 (resp.~0),
\item the weight of any original inner edge is equal to the number of incident faces of $G$ of degree~4, 
\end{compactitem}
We define $\cA$ as follows. For star arcs of $G^+$ inside faces of $G$ of degree $3$, we take their weight in $\cA$ to be the same as in $\cAb$. For an original inner arc $a$ of $G^+$, the weight $w_a$ of $a$ in $\cA$ is determined by looking at the triangles of $\Gb$ incident to $a$ which are inside a face of $G$ of degree~4: $w_a$ is the number of star edges in these triangles which are oriented toward the terminal vertex of $a$ in $\cAb$. 
This definition of $\cA$ is represented in Figure~\ref{fig:4-GS-existence}(e-f): the orientation $\cAb$ inside a face $f$ of $G$ of degree~4 is used to determine the weight of the arcs of $G$ incident to $f$ (one thing to observe is that the star-edges of $\Gb^+$ incident to the vertex $u_f$ of $\Gb$ are always oriented away from $u_f$). It is not hard to check that $\cA$ is indeed a 4-GS angular orientation of~$G$.

The above sketch of proof of the existence of a 4-GS angular orientation is constructive. Since there are algorithms for constructing 3-GS angular orientations in linear time~\cite{Schnyder:wood1}, the above process leads to an algorithm for constructing 4-GS orientations in linear time. This concludes this preview of our proof, and we will now tackle the general case.

\subsection{Existence of $d$-GS angular orientations by induction on $d$}\label{sec:induction-step}
In this subsection we present the general induction step for the existence of $d$-GS orientations. To be precise, we will prove that every $d$-adapted map \emph{without multiple edges} admits a $d$-GS angular orientation that can be computed in linear time. The case of maps with multiple edges will be treated in the next subsection. 

As noted above, the case $d=3$ was established by Schnyder~\cite{Schnyder:wood1}. Indeed the 3-adapted maps without multiple edges are the 3-connected triangulations, and the Schnyder woods of triangulations are in easy bijection with 3-GS angular orientations. Moreover, as established in~\cite{Schnyder:wood1}, Schnyder woods can be computed in linear time.

Suppose now that for some $d\geq 3$ the existence of $d$-GS angular orientations has been established for $d$-adapted maps without multiple edges. We consider a $(d+1)$-adapted map $G$ without multiple edges, for which we want to construct a $(d+1)$-GS orientation. As a guide, Figure~\ref{fig:diagram_proof} shows the sequence of steps to be performed for this inductive process.

\fig{width=.9\linewidth}{diagram_proof}{The sequence of steps to obtain a $(d+1)$-GS angular orientation on a $(d+1)$-adapted map $G$. Regarding the sequence of operations on maps, in step (1), to obtain $\overline{G}_\bullet$ from $G$, a copy of $X_d$ is inserted in each face of degree $d+1$ (including the outer one), and the map is re-rooted at a $d$-face in the outer copy of $X_d$. In step (2), to obtain $\widehat{G}_{\bullet}$ from $\overline{G}_{\bullet}^+$, the outer copy of $X_d$ is deleted (giving $G_{\bullet}^+$), and then the star-vertices of degree $d$ and their incident edges are deleted. In (4), to recover $G_{\bullet}^+$ from $\widehat{G}_{\bullet}^+$, the edges of $\widehat{G}_{\bullet}^+\backslash \widehat{G}_{\bullet}$ (``small'' edges) that are incident to star vertices of $G_{\bullet}^+$ are contracted. In (5), to recover $G$ from $G_{\bullet}$, the inner copies of $X_d$ are deleted. Regarding weighted orientations, all steps involve a trivial transfer (under edge contractions, edge deletions, or merges of edges), except step (3) where a directed spanning tree of $\widehat{G}_\bullet$ is used to orient the small edges and to decrement some arc weights in $\widehat{G}_\bullet$.
}
 
We first construct a new map $\Gb$ from $G$ by dissecting the faces of $G$ of degree $d+1$.
Let $X_d$ be the $(d+1)$-map represented in Figure~\ref{fig:face-plug} (the definition of $X_d$ depends on the parity of~$d$). Let $\Gb$ be the map obtained by gluing $X_d$ in each inner face of $G$ of degree $d+1$. 
 \fig{width=.65\linewidth}{face-plug}{The $(d+1)$-map $X_d$ in the case of odd $d$ (left, represented for $d=7$) and in the case of even $d$ (right, represented for $d=8$). The degree of the inner faces are indicated (the outer face has degree $d+1$).}

Note that the inner faces of $\Gb$ have degree at most~$d$. Moreover it is not hard to check the following claim.

\begin{claim}\label{claim:still-adapted} 
The map $\Gb$ has no non-facial simple cycle of length less than $d+1$.
\end{claim}

\begin{proof} 
If a non-facial simple cycle uses none of the inner edges from the copies of $X_d$, then it has length at least $d+1$ since $G$ is $(d+1)$-adapted. 
If a non-facial simple cycle stays within a copy of $X_d$ then it has length at least $d+1$. Finally, consider a non-facial simple cycle $C$ using some edges in a copy of $X_d$, but which is not confined to this copy. Then $C$ uses at least $d-1$ edges in the copy of $X_d$, and at least 2 additional edges (since $G$ does not have double edges, and cannot have an edge joining non-adjacent vertices of a face because it is $(d+1)$-adapted). 
\end{proof}

Let $X_d'$ be the map obtained from $X_d$ by ``taking one of its inner faces of degree $d$ to become the new outer face'' (this is better understood when picturing $X_d$ on a sphere with a face arbitrarily chosen to be the ``outer face''). The map $X_d'$ has an inner face of degree $d+1$, and we define $\bGb$ as the $d$-map obtained by gluing $\Gb$ in the face of degree $d+1$ of $X_d'$.
Using Claim~\ref{claim:still-adapted}, it it easy to see that $\bGb$ is $d$-adapted. Hence, by the induction hypothesis, $\bGb$ admits a $d$-GS angular orientation $\bAb$. By definition, the weight in $\bAb$ of a star edge $e$ is $d-\deg(f)$, where $f$ is the face of $\bGb$ containing $e$, and in particular this weight is 0 for star edges inside faces of $\bGb$ of degree $d$. Let $\hGb$ be the map obtained from $\bGb^+$ by 
\begin{compactitem}
\item deleting the star vertices and star edges (of weight 0) in the faces of $\Gb$ of degree $d$, and
\item deleting the vertices and edges $X_d'$ (except the outer vertices and edges of $G$) and the star vertices and star edges in the faces of $X_d'$.
\end{compactitem}
The map $\hGb$ is represented in Figure~\ref{fig:existence-proof-maps}. We mention that for the special case $d=3$ treated in Section~\ref{sec:special-case-proof-exist}, we had $\hGb=\Gb$ because $\Gb$ only had faces of degree $d$.

\fig{width=\linewidth}{existence-proof-maps}{The maps $\Gb$, $\hGb$ and $\hGb^+$. Left: a face of $\Gb$ of degree less than $d$, and a face of degree $d$. Middle: the corresponding faces in $\hGb$. Right: the corresponding faces in $\hGb^+$.}

Let $\cBb$ be the restriction of $\bAb$ to the map $\hGb$. It is not hard to check the following claim.

\begin{claim}\label{claim:positive-edges} Every inner edge of $\hGb$ has a positive weight in $\cBb$.
\end{claim}

\begin{proof} The claim is straightforward for star edges. For an original edge $e$ the weight in $\cBb$ is $\deg(f)+\deg(f')-d-2$, where $f$ and $f'$ are the faces of $\Gb$ incident to $e$. The cycle of $\Gb$ surrounding $f\cup f'$ has length $\deg(f)+\deg(f')-2$, hence this quantity is at least $d+1$ by Claim~\ref{claim:still-adapted}.
\end{proof}

Recall that a \emph{positive path} of a weighted orientation is a directed path of the underlying graph such that every arc on that path is positively weighted.
We say that $\cBb$ is \emph{co-accessible} if for every inner vertex~$v$ of $\hGb$ there exists a positive path of $\cBb$ starting at an outer vertex and ending at~$v$.

\begin{claim}\label{claim:co-accessible}
The weighted orientation $\cBb$ of $\hGb$ is \emph{co-accessible}.
\end{claim}

\begin{proof}
Let $U$ be the set of vertices of $\hGb$ made of the outer vertices and of the inner vertices that can be reached by a positive path of $\cBb$ starting from an outer vertex. Let $M_U$ be the submap of $\hGb$ induced by $U$. We think of $M_U$ as embedded simultaneously with $\hGb$. A \emph{hole} is an inner face of $M_U$ which is not empty, that is, which contains at least one vertex of $\hGb$ in its interior. Proving co-accessibility of  $\cBb$  thus amounts to showing that there is no hole. 

Assume for contradiction that there exists a hole. Then its contour $C$ is a non-necessarily simple cycle enclosing a simply connected region. Moreover, since $M_U$ is an induced submap, the cycle $C$ has no inside chord (that is, edge of $\hGb$ in the interior of $C$ with both endpoints on $C$); and since no vertex inside $C$ can be reached by a positive path, the inward degree of $C$ in $\cBb$ has to be $0$. 
Another simple observation is that there is at least one original vertex inside $C$; otherwise any star vertex $s$ inside $C$ would be only adjacent to original vertices on $C$, and due to the weight-conditions, at most one of these vertices has its arc to $s$ of weight $0$, which contradicts the fact that the inward weight of $C$ is~$0$. 

We now claim that $C$ is a pseudo-compatible cycle of $\Gb^+$ (as defined in Section~\ref{sec:proof-lattice}). First, note that $C$ cannot visit star vertices of degree $d$ since $M_U$ is a submap of $\hGb$. Second, we need to check that, in $\hGb$, there is no triangular face inside $C$ with two original vertices $u,v$ and one star-vertex $s$, with the (original) edge $\{u,v\}$ on $C$. There are two cases to check. (i) If $s$ is in the interior of $C$, then the arcs $(u,s)$ and $(v,s)$ are both taken into account in the inward degree of $C$; but they can not both have weight $0$ (otherwise the weight of $s$ would be too large), which gives a contradiction with the fact that $C$ has inward degree $0$. (ii) If $s$ is on $C$,  since $C$ has no inside chord we conclude that $C$ must be the contour of the triangular face, contradicting the fact that it contains at least one vertex in its interior. Thus, $C$ is pseudo-compatible. 
By Lemma~\ref{lem:compa}, its enclosing length has to be $d$. Hence, one can extract a cycle $C'$ of original edges of length at most $d$ and whose interior contains the interior of $C$ (see Figure~\ref{fig:pseudo-compatible}(a)). Recalling that there is at least one original vertex inside $C$, we conclude that $C'$ forms a cycle of $\Gb$ of length at most $d$ with at least one vertex inside, which gives a contradiction with Claim~\ref{claim:still-adapted}.     
\end{proof}

Since $\cBb$ is co-accessible, there exists a spanning tree $T$ of $\hGb$ satisfying the following conditions:
\begin{compactitem}
\item $T$ contains all the outer edges of $\hGb$ except $\{v_1,v_2\}$, 
\item taking $v_1$ as the root of $T$, for every inner edge $e$ of $\hGb$ belonging to the tree $T$, the arc of $e$ oriented from parent to child in $T$ is positively weighted in $\cBb$.
\end{compactitem}

We will now use $\cBb$ and $T$ to define a weighted orientation ${\hAb}$ of the angular map $\hGb^+$.
Recall from Claim~\ref{claim:inside-face} that there is a bijection $\varphi$ between the edges of $\hGb\setminus T$ and the inner faces of $\hGb$. For each inner face $f$ of $\hGb$, we consider the corresponding edge $e=\varphi^{-1}(f)$. By Claim~\ref{claim:positive-edges}, the weight of $e$ in $\cBb$ is positive and we pick an arc $a_f$ of $e$ with positive weight. Let $u_f$ be the terminal vertex of $a_f$. 
With this notation, we define the weighted orientation ${\hAb}$ of the inner edges of $\hGb^+$ as follows.
\begin{compactitem}
\item For an inner face $f$ of $\hGb$, the star edges of $\Gb^+$ incident to the star vertex $v_f$ have weight~1 and are oriented as follows: the edge $\{v_f,u_f\}$ is oriented toward $u_f$ (i.e. has weight 1 in this direction), while the other edges are oriented toward $v_f$.
\item For an original inner arc $a$ of $\hGb^+$, the weight $\hom_a$ of $a$ in ${\hAb}$ is equal to $\om_a-\eps_a$, where $\om_a$ is the weight of $a$ in $\cBb$ and $\eps_a=1$ if either $a=a_f$ for some inner face $f$ of $\Gb$, or if $a$ is in $T$ and oriented from parent to child (for $T$ rooted at the outer vertex $v_1$), and $\eps_a=0$ otherwise.
\end{compactitem}

Note that all the arc weights in ${\hAb}$ are non-negative. For an edge or vertex $x$ of $\hGb$, let $\om(x)$ and $\hom(x)$ be their weight in $\cBb$ and ${\hAb}$ respectively. It follows directly from the definition that for every inner edge $e$ of $\hGb$, one has $\hom(e)=\om(e)-1.$ 
We also claim that for every inner vertex $v$ of $\hGb$, one has $\hom(v)=\om(v)+1$. To show this, let us consider the set $\toward(v)$ of arcs of $\hGb$ having terminal vertex $v$, and the number $n_v$ of arcs in $\toward(v)$ of the form $a_f$ (equivalently, $n_v$ is the number of inner faces $f$ of $\hGb$ such that $u_f=v$). With this notation, the contribution of star edges to $\hom(v)$ is $\deg(v)-n_v$, while the contribution of original edges to $\hom(v)$ is $\om(v)-\eps_v$, where $\eps_v=\sum_{a\in \toward(v)}\eps_a=\deg(v)-1-n_v$.

Next, we use the orientation ${\hAb}$ of $\hGb^+$ to define an orientation ${\cAb}$ of $\Gb^+$. 
Let us call \emph{small triangles} of $\Gb^+$ the inner faces (of degree 3) of the angular map $\Gb^+$ incident to star vertices of degree less than~$d$ (see Figure~\ref{fig:existence-proof-maps}). Observe that $\hGb^+$ is obtained from $\Gb^+$ by placing a vertex in each small triangle of $\Gb^+$ and connecting it to the three incident vertices of $\Gb^+$. We call \emph{small} the vertices and edges of $\hGb^+$ which are in the small triangles of $\Gb^+$. 

By contracting all the small edges of $\hGb^+$ incident to star vertices of $\Gb^+$ one obtains a plane map $\tGb^+$ closely related to $\Gb^+$: precisely, $\tGb^+$ is the same as $\Gb^+$ except that for every star edge $e$ incident to a star vertex of degree less than $d$ one has 2 additional small edges $e_\ell,e_r$ with the same endpoint as $e$ ($e_\ell$ and $e_r$ are on the left and right of $e$ respectively). This is represented in Figure~\ref{fig:existence-proof-construct}(a).
We define $\cAb'$ as the weighted orientation of $\Gb^+$ obtained from ${\hAb}$ by contracting the small edges of $\hGb^+$ incident to star vertices of $\Gb^+$: precisely, the weights in $\cAb'$ are the same as in $\hAb$ except that for every star edge $e$ incident to a star vertex of degree less than $d$, the weights of the arcs in $e$ are taken as the sum of the weights in $\hAb$ of the corresponding arcs in $e$, $e_\ell$ and $e_r$.

\fig{width=.9\linewidth}{existence-proof-construct}{(a) Contracting the small edges incident to a star vertex of $\Gb^+$. (b) Getting from the orientation $\cAb$ of $\Gb^+$ to the orientation $\cA$ of $G^+$ (situation inside a copy of $X_d$).}

\begin{claim}\label{claim:final-weights}
The weighted orientation $\cAb'$ of $\Gb^+$ satisfies the following properties:
\begin{compactitem}
\item[(i)] every original inner vertex has outgoing weight $d+1$,
\item[(ii)] every original inner edge $e$ has weight $\deg(f)+\deg(f')-d-3$, where $f$ and $f'$ are the faces of $\Gb$ incident to $e$,
\item[(iii)] every star vertex $v_f$ has outgoing weight $d+1-\deg(f)$,
\item[(iv)] every star edge $e$ incident to the star vertex $v_f$ has weight $d+1-\deg(f)$.
\end{compactitem}
\end{claim}

\begin{proof} Recall that the weight of a vertex or edge $x$ in ${\cBb}$ and ${\hAb}$ is denoted by $\om(x)$ and $\hom(x)$ respectively.\\
(i) The weight of an original vertex $v$ in $\cAb'$ is $\hom(v)=\om(v)+1=d+1$.\\
(ii) The weight of an original inner edge $e$ in $\cAb'$ is $\hom(e)=\om(e)-1=\deg(f)+\deg(f')-d-3$.\\
(iii) If the inner face $f$ of $\Gb$ has degree $d$, then the weight of $v_f$ in $\cAb'$ is $1=d+1-\deg(f)$. If the inner face $f$ of $\Gb$ has degree less than $d$, then it is easy to see that the weight of $v_f$ in $\cAb'$ is equal to $\hom(v_f)$ (upon recalling that all the star vertices and star edges of $\hGb^+$ have weight 1 in  ${\hAb}$), hence is equal to $\om(v_f)+1=d+1-\deg(f)$.\\
(iv) If the inner face $f$ of $\Gb$ has degree $d$, then the weight of the star edge $e$ in $\cAb'$ is $1=d+1-\deg(f)$. If the inner face $f$ of $\Gb$ has degree less than $d$, then it is easy to see that the weight of $e$ in $\cAb'$ is $\hom(e)+2=\om(e)+1=d+1-\deg(f)$.
\end{proof}

By Claim~\ref{claim:final-weights}, the orientation $\cAb'$ satisfies Conditions (A1) and (A2) of $(d+1)$-GS angular orientations of $\Gb$. 

In order to show that $\cAb'$ can be modified to satisfy Condition~(A0), 
  we prove the following lemma. 

\begin{lemma}\label{lem:conditions_A_prime}
In the orientation $\cA_\bullet'$, we call \emph{border star vertices} those in faces of $\Gb$ incident to at least one outer edge. 
Then any arc starting at an outer vertex has weight $0$ if it does not end at a border star vertex,  
and any arc starting at a border star vertex has weight $0$ if it ends at an inner (original) vertex. 
Finally, for every border star vertex $v_f$, the outer neighbours  of $v_f$ are consecutive along the outer face contour.   
\end{lemma}
\begin{proof}
Let $C$ be the outer cycle of $G$.  Lemma~\ref{lem:ingoing-weight-angular} 
gives
 $\inweight(C)=\sum_{i=1}^{d+1}(d+1-\deg(f_i))$, where $f_i$ is the inner face incident to the outer edge $(v_{i-1},v_i)$. 
Let $S$ be the total weight  of arcs from border star vertices to inner vertices. 
Since Conditions (A1) and (A2) hold, the contribution to $\inweight(C)$ of the arcs from outer vertices to border star vertices is at least  $S+\sum_{i=1}^{d+1}(d+1-\deg(f_i))$, with equality if and only if every border star vertex $v_f$ has its outer neighbours consecutive along the outer face contour (which means that the number of outer vertices incident to $f$ is one more than the number of outer edges incident to $f$). 
Hence, the total weight of the arcs that start at an outer vertex and do not end at a border star vertex is at most $-S$. Hence, $S=0$, giving the first two statement, and moreover the above inequality has to be tight, giving the third statement.
\end{proof}
To turn $\cA_\bullet'$ into a $(d+1)$-GS angular orientation $\cA_{\bullet}$, we modify some arc weights in the neighborhood of the outer face, keeping the weights of edges and of inner vertices unchanged, as follows. 
For each edge $(v_i,v_{f_i})$, with $f_i$ the inner face of $G_\bullet$ incident to $(v_{i-1},v_i)$,   
we put all the weight on the arc out of $v_i$; and if $f_{i+1}\neq f_i$, we put all the weight of the edge $(v_i,v_{f_{i+1}})$ on the arc ending at $v_i$.  
With these modifications, and given Lemma~\ref{lem:conditions_A_prime}, one easily checks that all conditions of (d+1)-GS angular orientations 
 are now satisfied.

Finally, we construct from $\cAb$ a $(d+1)$-GS angular orientation $\cA$ of $G$. Recall that $\Gb$ is obtained from $G$ by adding a copy of $X_d$ in each inner face of degree $d+1$. For each copy of $X_d$, the weighted orientation $\cAb$ restricted to the interior of $X_d$ satisfies Conditions (A1) and (A2) of $(d+1)$-GS angular orientations of $X_d$. Thus, by Lemma~\ref{lem:ingoing-weight-angular}, the total weight $W$ of the arcs strictly inside $X_d$ having initial vertex an outer vertex of $X_d$ satisfies $W=d+1$. 
Let us call \emph{border faces} of $X_d$ the $d+1$ inner faces of $X_d$ that share an edge with its outer face. The border star vertices of $X_d$ are the star vertices corresponding to the border faces, and the \emph{border star edges} of $X_d$ are the star edges connecting a border star vertex to an outer vertex (there are two such edges incident to each border star vertex). 
By definition, border star vertices and border star edges have weight 1 in $\cAb$. Hence, for each of the $d+1$ border star vertices, the contribution to $W$ of the two incident border star edges is in $\{1,2\}$. Since $W=d+1$ we conclude that this contribution is exactly $1$ for each border star vertex, and that $W$ has no other contribution. 


Consider the following operations on $\Gb^+$ and $\cAb$, which are represented in Figure~\ref{fig:existence-proof-construct}(b):
\begin{compactenum}
\item Delete all the non-border vertices and edges from every copy of $X_d$. 
\item Contract one incident border star edge for each border star edge.
\item Add a star vertex and star edges of weight 0 in each face of degree $d+1$ previously containing a copy of $X_d$.
\end{compactenum}
By the above, these operations do not affect the weight of the vertices of $G$ (hence they all have weight $d+1$). The remaining border edges are parallel to some original edges of $G$. For each original inner arc $a$ of $G$, we add to the weight of $a$ in $\cAb$ the weight of the parallel border arcs with the same initial and terminal vertices as $a$. Finally we delete all border edges and obtain the map $G^+$ endowed with a weighted orientation $\cA$. It is easy to check that $\cA$ is a $(d+1)$-angular orientation $\cA$ of $G$. This completes the proof of the existence of a $(d+1)$-GS angular orientation by induction.

The above process for getting a $(d+1)$-GS angular orientation of $G$ from a $d$-GS angular orientation of $\bGb$ is constructive and can easily be translated into an algorithm working in linear time in the number of vertices of $\bGb$ (for any fixed $d$). Since the number of vertices of $\bGb$ is linear in the number of vertices of $G$, and since the induction hypothesis ensures that a $d$-GS angular orientation of $\bGb$ can be computed in linear time, we conclude that a $(d+1)$-GS angular orientation of $G$ can be constructed in linear time.

\subsection{Case of maps with multiple edges}
In this subsection we complete the proof of Theorem~\ref{thm:main}. The existence of a $d$-GS angular orientation has been established for $d$-adapted maps without multiple edges. Let us now consider a $d$-adapted map $G$ with some multiple edges.

Since $G$ is $d$-adapted, any cycle made by 2 parallel edges must be facial. By contracting the faces of degree 2 of $G$ into a single edge, one obtains a $d$-adapted map $G'$ without multiple edges. Let us call \emph{digons} the faces of $G$ of degree 2, and \emph{digon edges} the corresponding edges of $G'$. As established above, $G'$ admits a $d$-GS angular orientation $\cA'$. Since $G$ is $d$-adapted, any digon edge of $G'$ is incident to two faces of degree $d$. By definition of angular orientations, the digon edges of $G'$ have weight $d-2$. We construct from $\cA'$ an angular orientation $\cA$ of $G$ as indicated in Figure~\ref{fig:digon-opening}.
It is easy to see that the resulting orientation is a $d$-GS angular orientation. Moreover the above construction of $\cA$ is performed in linear time, which completes the proof.

\fig{width=.9\linewidth}{digon-opening}{The construction of the angular orientation $\cA$ starting from $\cA'$. The weights of the arcs in $\cA$ and $\cA'$ are the same except for digon edges of $G'$ which are transformed as indicated in this figure. The transformation for an inner digon edge is indicated on the left (the weights $\om$ and $\om'$ sum to $d-2$), while the transformation of an outer digon edge is indicated on the right. The two edges of $G$ incident to the digon have weight 0.}


\section{Quasi-Schnyder structures}\label{sec:level2-GS}

In this section, we give an extension of grand-Schnyder structures to so-called quasi $d$-adapted maps. This extension of the domain of definition of grand-Schnyder structures is partly motivated by the case of 5-connected triangulations, for which quasi-Schnyder structures have incarnations which are very reminiscent of Schnyder's original definition (except that there are 5 spanning trees instead of 3) and can be used to define a graph drawing algorithm~\cite{OB-EF-SL:5QS-drawing}.

For $d\geq 3$, a $d$-map $G$ is called \emph{quasi $d$-adapted} if every non-facial cycle has length at least $d-1$, and those enclosing at least one vertex have length at least $d$. In other words, the only non-facial cycles of length smaller than $d$ have length $d-1$ and have no vertex in their interior. Note that if a quasi $d$-adapted $d$-map $G$ has no inner face of degree less than $3$, then the cycles of length $d-1$ have at most one edge (and no vertex) in their interior. To avoid overly complicated formulations, we will only give the incarnations as angular orientations and as corner labelings, and will work with $d$-maps having only inner faces of degree at least $3$.


\subsection{Quasi-Schnyder angular orientations}\label{subsec:WGS-angular}

Let $d\geq 3$, and let $G$ be a $d$-map. 
For an inner edge $e$ of $G$, we let $\delta(e):=\mathrm{deg}(f)+\mathrm{deg}(f')-d-2$, where $f,f'$ are the faces incident to $e$.  
Note that if $G$ is $d$-adapted (resp. quasi $d$-adapted), then for each inner edge $e$ of $G$, we have $\delta(e)\geq 0$ (resp. $\delta(e)\geq -1$). 
An inner edge $e$ of $G$ is called \emph{special} if $\delta(e)=-1$. 
We let $\Gcr$ be the map obtained from $G^+$ by placing a vertex $v_e$, called an \emph{edge-vertex}, in the middle of every special edge; and adding one edge between $v_e$ and a star vertex $v_f$ on each side of $e$ (where $f$ is the face of $G$ incident to $e$ on that side of $e$), so that every edge-vertex has degree $4$ in $\Gcr$.
The edges of $\Gcr$ incident to edge-vertices are called \emph{extra edges}. The map $\Gcr$ is represented in Figure~\ref{fig:5_quasi_other}.



\begin{definition}\label{def:quasiAngular}
Let $d\geq 3$, and let $G$ be a $d$-map. 
A \emph{quasi-Schnyder angular orientation}, or \emph{$d$-QS angular orientation}, of $G$ is a weighted orientation of $\Gcr$ satisfying the following conditions (shown in the top-part of Figure~\ref{fig:def-incarnations-QS}).
\begin{itemize}
\item[(A0')] The weight of every outer arc is 0. Any inner arc $a$ of $\Gcr$ whose initial vertex is an outer vertex $v_i$ has weight $0$, unless $a$ is the arc (on a star-edge) following the outer edge $(v_i,v_{i-1})$ around~$v_i$ (for this arc there is no condition), or $a$ is on an extra edge and has 
the inner face of $G$ incident to $(v_i,v_{i-1})$ on its right.
\item[(A1')] The outgoing weight of a star vertex $v_f$ is $d-\deg(f)$. For every star edge $e$ incident to $v_f$ and an original vertex $v$ the weight of $e$ is $\si(\eps)=d-\mathrm{deg}(f)-s$, where $s\in\{0,1,2\}$ is the number of special edges among the edges $e',e''$ preceding and following $e$ around~$v$.
\item[(A2')] The outgoing weight of every inner original vertex is $d$. The weight of every original non-special inner edge $e$ is $\delta(e)$. 
\item[(A3')] The outgoing weight of every edge-vertex is $1$. The weight of every extra edge is $1$. 
\end{itemize}
\end{definition}

A 5-QS angular orientation is shown in Figure~\ref{fig:5_quasi_other} (bottom-left part).

\fig{width=\linewidth}{def-incarnations-QS}{Definition of quasi-Schnyder structures.}

\begin{remark}
Note that if $G$ has no special edge, the definition of $d$-QS angular orientation exactly matches Definition~\ref{def:angular} of $d$-GS angular orientations.
\end{remark}

We now prove some rigidity properties in the vicinity of the outer face, similar to those in Remark~\ref{rk:frozen}.

\begin{lemma}\label{lem:rigid_qs}
Let $G$ be a $d$-map endowed with a $d$-QS angular orientation $\cA$. For $i\in[d]$ let $f_i$ be the inner face of $G$ incident to the outer edge $(v_{i-1},v_i)$, and 
let $s_i$ be the star vertex in $f_i$. Then the arc from $s_i$ to $v_i$ has weight $0$. Moreover, if there is a special inner edge $e$ incident to $v_i$ and $f_i$, 
  then the unique arc of weight 1 starting at $v_e$ ends at a star vertex of $G$. Furthermore, if $G$ has an outer vertex incident to at least two inner edges in $G$, then the arc of weight 1 starting at $v_e$ ends at $s_i$.
\end{lemma}
\begin{proof}
We define a \emph{tricky} edge of $G$ as a special edge $e$ incident to an outer vertex $v_i$, such that in $\cA$ the outgoing edge of $v_e$ is the next edge after $(v_e,v_i)$ 
in clockwise order around $v_e$, as represented in Figure~\ref{fig:rigid}(a).

We first prove that the lemma holds if there is no tricky edge. Consider any portion $v_{i'},\ldots,v_i$ of consecutive outer vertices that are incident to $f_i$, such that $v_{i'}$ 
and $v_i$ are incident to at least one inner edge in $G$, and let $e$ (resp. $e'$) be the inner edge of $G$ incident to $v_i$ (resp. $v_{i'}$) and $f_i$.  
If $e'$ is non-special, then by Conditions (A0') and (A1') the arc from $s_i$ to $v_{i'}$ has weight $d-\deg(f_i)$. Since $s_i$ has outgoing weight $d-\deg(f_i)$ by Condition~(A1'), all the other arcs from $s_i$ have weight $0$, in particular the arcs from $s_i$ to all vertices in $v_{i'+1},\ldots,v_i$ have weight $0$. Moreover, if $e$ is special then the arc from $s_i$ to $v_e$ has weight $0$, hence the unique
outgoing edge of $v_e$ is the edge ending at $s_i$, and thus the arc from $v_e$ to $v_i$ has weight $0$.
If $e'$ is special, then by Conditions (A0') and (A1') the arc from $s_i$ to $v_{i'}$ has weight $d-\deg(f_i)-1$, and moreover the arc from $s_i$ to $v_{e'}$ has weight $1$,
since $e'$ is not tricky. Hence the whole outgoing weight of $s_i$ is taken by these two arcs, so that all other arcs from $s_i$ have weight $0$. In particular the arcs from $s_i$ to all   vertices in $v_{i'+1},\ldots,v_i$ have weight $0$, and if $e$ is special then the arc from $s_i$ to $v_e$ has weight $0$, so that the unique outgoing edge of $v_e$ leads to $s_i$, and thus
the arc from $v_e$ to $v_i$ has weight $0$.

\fig{width=0.8\linewidth}{rigid}{(a) A tricky edge $e$, as defined in the proof of Lemma~\ref{lem:rigid_qs}.  (b) The situation when there exists a tricky edge $e$.}

Assume now that there is a tricky edge $e$, with $v_i$ the outer vertex adjacent to $v_e$.   
Since $v_e$ has outdegree~$1$, the edge $(v_i,v_e)$ is directed toward $v_e$. Hence, by Condition (A0'), $e$ is incident to $f_i$ in $G$.
Let $s_i$ be the star vertex in $f_i$, let $v_{i'}$ be the next outer vertex after $v_i$ in counterclockwise order around the
outer face contour that is incident to $f_i$ and to an inner edge, and let 
$e'$ be the inner edge of $G$ incident to $v_{i'}$ and to $f_i$. 
Conditions (A0') and (A1') imply that the arc from 
$s_i$ to $v_{i'}$ has weight $d-\deg(f_i)-\delta_{e'\ \mathrm{special}}$, and Condition (A3') implies that the arc from $s_i$ to $v_e$ has weight $1$ (as $v_e$ has its outgoing edge on the other side of $e$), as represented in Figure~\ref{fig:rigid}(b). Since $s_i$
has outgoing weight $d-\deg(f_i)$, we conclude that $e'$ has to be special, and that the arc from $v_{e'}$ to $s_i$ has weight $1$. Hence $e'$ is also tricky. 
Condition~(A3') implies that the edge $(v_{i'},v_{e'})$ is directed toward $v_{e'}$, and then Condition~(A0') implies that $e'$ 
has to be the unique inner edge of $G$ incident to $v_{i'}$. Continuing iteratively in counterclockwise order around the outer face contour, until reaching $v_i$ again, 
we conclude that if there exists a tricky edge, then every outer vertex is either incident to no inner edge of $G$, or to a unique inner edge of $G$ that is tricky. 
In this situation it is then easy to check that the lemma holds (by a similar argument as the one used when there is no tricky edge); note that the very last point (which fails) does
not need to be checked since all outer vertices are incident to $0$ or $1$ inner edge in $G$.
\end{proof}

\subsection{Quasi-Schnyder labelings}\label{subsec:WGS-labeling}
For $G$ a $d$-map, the \emph{derived map} $G'$ is the map obtained from $G^+$ by placing a vertex $v_e$, called an \emph{edge-vertex}, in the middle of every original edge $e$, and for each side of $e$ 
incident to an inner face $f$ of $G$, we add an edge (on that side of $e$) from $v_e$ to the star vertex $v_f$ (compared to $\Gcr$, we place an edge-vertex on every edge of $G$, not just on special edges). 
The derived map $G'$ is represented in Figure~\ref{fig:5_quasi_other}.

\begin{definition}\label{def:WGS-labeling}
Let $d\geq 3$, and let $G$ be a $d$-map with no inner face of degree less than~$3$. 
A \emph{quasi-Schnyder labeling}, or \emph{$d$-QS labeling}, of $G$ is an assignment to each inner face of $G'$ of a label in $[d]$ satisfying the following conditions (shown in the bottom-part of Figure~\ref{fig:def-incarnations-QS}).
\begin{itemize}
\item[(L0')] For $i\in [d]$, all the inner faces of $G'$ incident to $v_i$ have label $i$, except possibly for the last one (incident to $(v_{i-1},v_i)$),
and for the next to last one if that face is incident to (the extra edge on) a special edge of $G$.
\item[(L1')] For every inner vertex $v$ of $G$, the sum of clockwise label jumps around $v$ is $d$. The clockwise label jump across any star edge $\eps$ incident to $v$ is at most $\si(\eps)$, with $\si(\eps)$ given in (A1') of Definition~\ref{def:quasiAngular}; and the clockwise label jump across an edge $(v,v_e)$ (with $v_e$ an edge-vertex) is in $\{0,1\}$ if $e$ is special (there is no constraint if $e$ is not special).
\item[(L2')] For an edge $\eps$ between a star vertex $v_f$ and an edge-vertex $v_e$, the clockwise jump at $v_f$ across $\eps$ 
is $d-\mathrm{deg}(f)+1$ if $e$ is non-special, and is in $\{d-\mathrm{deg}(f)-1,d-\mathrm{deg}(f)\}$ if $e$ is special. 
\item[(L3')] For each star vertex $v_f$, let $\Sigmacw$ be the sum of clockwise jumps across edges from $v_f$ to edge-vertices, and let $\Sigmaccw$ be the sum of 
counterclockwise jumps across edges from $v_f$ to original vertices. Then $\Sigmacw-\Sigmaccw=d$. 
\end{itemize}
\end{definition}


A $5$-QS labeling is shown in Figure~\ref{fig:5_quasi_other} (bottom-right part).

\fig{width=.9\linewidth}{5_quasi_other}{Top: a quasi $5$-adapted map $G$ (special edges are shown in green). Bottom: on the left a $5$-QS angular orientation on $G^{\times}$; on the right the corresponding $5$-QS labeling (labeling of the inner faces of~$G'$).}

\begin{remark}\label{rk:ccw-quasi}
Similarly as for Lemma~\ref{lem:ccw-jumps-edges}, one can prove (using the Euler relation) that in a $d$-QS labeling, the sum of the 4 label jumps in counterclockwise order around any inner edge-vertex is equal to $d$. 
 \end{remark}
 
In order to explain the relation between $d$-QS labeling and $d$-GS labeling we introduce a bit of vocabulary.
We call \emph{right triangle} (resp. \emph{left triangle}) an inner face of $G'$ on the right (resp. left) of an arc of $G'$ from a star vertex to an original vertex. Note that every inner edge of $G'$ separates a right triangle and a left triangle.

\begin{remark}\label{rk:jumps-differences}
Let $G$ be a $d$-map.
Let $t_1,t_2,t_3$ be three consecutive faces of $G'$ in clockwise order around a star vertex $v_f$, with $t_1$ a right triangle (so that $t_2$ is a left triangle and $t_3$ is a right triangle). Let $i_1,i_2,i_3$ be the labels of $t_1,t_2,t_3$ respectively in a $d$-QS of $G$; see Figure~\ref{fig:labeling_wtos}(a). Then, by (L1') and (L2') the label jumps satisfy:
$$\jp(i_1,i_2)\geq \jp(i_3,i_2),$$
and this inequality is strict if $e$ is not special.
In particular, in the notation of Definition~\ref{def:WGS-labeling}, the difference $\Sigmacw-\Sigmaccw$ in Condition (L3') is the sum of the (non-negative) label jumps between right triangles in clockwise order around $v_f$.
\end{remark}

 \fig{width=\linewidth}{labeling_wtos}{(a) Situation described in Remark \ref{rk:jumps-differences} (the inequality $\jp(i_1,i_2)\geq \jp(i_3,i_2)$ always holds). (b) Turning a $d$-GS labeling into a $d$-QS labeling.}

\begin{remark} 
Let $G$ be a $d$-adapted map. A $d$-GS corner labeling $\cL$ can be turned into a $d$-QS labeling $\cL'$ as follows (see Figure~\ref{fig:labeling_wtos}(b)). 
Every inner corner $c$ of $G$ correspond to star edge $\eps_c$ of $G'$, which is incident to a right triangle $t_c$; and we set the label of $t_c$ in $\cL'$ to be the label of $c$ in $\cL$. The label of the left triangles in $\cL'$ are then determined by Condition (L2'): for every extra edge $a=(v_e,v_f)$ incident to a star vertex $v_f$ the label of the left triangle incident to $e$ is equal to the label of the right triangle incident to $e$ plus $d-\deg(f)+1$.
One can easily check that, the constraints of Definition~\ref{def:GS-labeling} for $d$-GS labelings imply that $\cL'$ satisfy the constraints of Definition~\ref{def:WGS-labeling} for $d$-QS labelings. (For instance, Condition (L3) for $\cL$ implies that the sum of label jump around an original vertex $v$ are the same in $\cL'$ as in $\cL$ (hence equal to $d$); while Conditions (L1-L2) imply that the label jumps in $\cL$ from a corner to the next around a face $f$ are at most $d-\deg(f)-1$, ensuring that the label jumps across a star edge $\eps$ in $\cL'$ is at most $\si(\eps)$.)

Conversely if $\cL$ is a corner labeling of $G$ such that $\cL'$ is a $d$-QS labeling, then it is easy to check that $\cL$ is a $d$-GS labeling of $G$. (For instance, by Remark~\ref{rk:jumps-differences}, Condition (L3') for $\cL'$ implies that the sum of clockwise label jumps in $\cL$ around any inner face of $G$ is $d$, and Remark~\ref{rk:jumps-differences} also readily imply Condition (L2) and (L3) for $\cL$.) In conclusion, when $G$ is $d$-adapted, the above is a bijection between the $d$-GS labelings of $G$ and the $d$-QS labelings of $G$.
 \end{remark}



\subsection{Main result on quasi-Schnyder structures}

We state here the existence condition for $d$-QS structures, which extends Theorem~\ref{thm:main} (for the two relevant incarnations, and assuming no face has degree smaller than $3$).

\begin{thm}\label{thm:main-quasi}
Let $d\geq 3$ and let $G$ be a $d$-map with all inner faces of degree at least $3$. There exists a $d$-QS angular orientation (resp. labeling) for $G$ if and only if $G$ is quasi $d$-adapted.

Moreover for any fixed $d$, there is an algorithm which takes as input a quasi $d$-adapted map, and computes a $d$-QS angular orientation (resp. labeling) 
in linear time in the number of vertices.

Lastly, the set $\bA_G'$ of $d$-GS angular orientations of $G$ and the set $\bL_G'$ of $d$-QS labelings of $G$ are in bijection.
\end{thm}


We will prove the necessity of being quasi $d$-adapted in Section~\ref{sec:proof-necessity-quasi}, and then the existence result in Section~\ref{sec:proof-existence-quasi}, 
and will also describe there the algorithm for computing a $d$-QS structure.\\

We now describe the bijection between $\bL_G'$ and $\bA_G'$. Let $d\geq 3$, and let $G$ be a $d$-map, with all inner faces of degree at least $3$, endowed with a $d$-QS labeling $\cL'$. 
We produce a weighted orientation $\cA'=\Phi'(\cL')$ of $\Gcr$ as follows. 
\begin{itemize}
\item
For each arc $a$ of $\Gcr$ whose initial vertex is an original vertex $v$ of $G$, the weight assigned to $a$ is the label-jump from the face of $G'$ on the left of $a$ to the face on the right of $a$ around $v$. If $a$ is on a star edge $\eps$, then the weight of the opposite arc $-a$ is assigned so that the sum of the two weights is $\si(\eps)$, with $\si(\eps)$ given in Definition~\ref{def:quasiAngular}.  If $a$ is on an extra edge, then the weight of the opposite arc $-a$ is assigned so that the sum of the two weights is 1.
\item For each arc $a$ of $\Gcr$ on an extra inner edge from a star vertex $v_f$ to an edge-vertex $v_e$, the weight of $a$ is 1 (resp. 0) if the label-jump from the face on the left to the face on the right of $a$ is $d-\mathrm{deg}(f)$ (resp. is $d-\mathrm{deg}(f)-1$). The weight of the opposite arc $-a$ is assigned so that the sum of the two weights is 1.
\end{itemize}


It is easy to check that for any corner labeling $\cL\in\bL_G'$, the weighted orientation $\Phi'(\cL)$ is in $\bA_G'$. Indeed Conditions (L1') and (L2') (for edges) ensure that all the arc weights in $\Phi'(\cL)$ are non-negative, Remark~\ref{rk:ccw-quasi} ensures that $\Phi'(\cL)$ satisfies Conditions (A2') (for original edges) and (A3') (for edge-vertices), while Condition (L3') ensures that $\Phi'(\cL)$ satisfies Conditions (A1') (for star vertices). 
The inverse mapping $\bPhi'$ amounts to specifying the label jumps from the weighted orientation:
\begin{compactitem}
\item
For any inner arc $a$ of $\Gcr$ starting from an original inner vertex $v$, the weight of $a$ specifies the label jump from the inner face of $G'$ on the left of $a$ (considering the first half of $a$ if $a$ is on a non-special edge of $G$) to the inner face of $G'$ on the right of $a$. 
\item
For any arc $a$ from a star vertex $v_f$ to an edge-vertex $v_e$, if $e$ is non-special then the label-jump from the face on the left of $a$ to the face on the right of $a$ is 
$d-\mathrm{deg}(f)+1$; if $e$ is special and $a$ has weight $1$ (resp. $0$) then the label-jump from the face on the left of $a$ to the face on the right of $a$ is 
$d-\mathrm{deg}(f)$ (resp. $d-\mathrm{deg}(f)-1$).
\end{compactitem}
With these specifications, the sum of label-jumps in clockwise (resp. counterclockwise) order around an inner original vertex (resp. edge-vertex) is $d$, and 
for any star vertex, the jumps around it satisfy (L3'). 
Similarly as in the proof of Proposition~\ref{prop:bij-beta}, this ensures that the labels can be uniquely propagated, up to a global shift. Moreover, the weight configuration for inner arcs 
starting from an outer vertex ensure that there is a unique global shift so that the resulting labeling satisfies (L0').
By construction, the labeling also satisfies the other conditions of Definition~\ref{def:WGS-labeling}. Hence, $\bPhi'(\cA)\in\bL_G'$ for all $\cA\in\bA_G'$. 
Moreover, by construction, the two mappings $\Phi',\bPhi'$ are inverse of each other, hence bijections between $\bA_G'$ and $\bL_G'$.

\begin{remark}
In the case where $G$ is a triangulation of the 5-gon, being quasi 5-adapted is equivalent to a notion of strong irreducibility: every 3-cycle must be the boundary of a face (irreducibility), and moreover every 4-cycle must bound an adjacent pair of faces. 
As we now explain, the QS angular orientations and labelings can be simplified in this case, and lead to an orientation of the \emph{primal-dual completion} of $G$ (the map obtained by superimposing $G$ and $G^*$), and a corner labeling of $G$. An example is shown in Figure~\ref{fig:5c_structure_simple}.

Consider first the $5$-QS angular orientations of the triangulation $G$. All original inner edges of~$G$ are special, and all the star edges have weight $0$, except for the ones (of weight $1$) connecting an outer vertex $v_i$ or $v_{i-1}$ to the star vertex in the inner face incident to $(v_{i-1},v_i)$. We now assume that $G$ has at least $2$ inner vertices, so that the last property stated in Lemma~\ref{lem:rigid_qs} holds, hence the situation in the vicinity of outer edges is as shown in the left part of Figure~\ref{fig:5c_outer}(a).
 By performing the operations in Figure~\ref{fig:5c_outer}(a), and deleting the star edges, one gets an orientation of the inner edges of the primal-dual completion of $G$ characterized by the following conditions:
 \begin{itemize}
 \item
the outer vertices (which are primal vertices) have outdegree $0$, 
\item
the inner primal vertices have outdegree $5$, 
\item
the dual vertices have outdegree $2$, 
\item
the edge-vertices have outdegree $1$. 
 \end{itemize}
  
Consider now the $5$-QS labelings of the triangulation $G$. Note that each pair of inner faces of $G'$ corresponding to an inner corner of $G$ must have the same label (this follows from (L1'), the star edge $\eps$ in that corner satisfies $\si(\eps)=0$), except for the pairs corresponding to the first inner corner of $G$ at $v_i$ (the one incident to $(v_{i-1},v_{i})$).  
Hence, up to performing the label transfer shown in Figure~\ref{fig:5c_outer}(b) at the special pairs, and performing the natural label transfer for the other pairs (keeping the labels), the QS labeling of $G'$ yields a labeling of the inner corners of $G$ characterized by the following conditions:
\begin{itemize}
\item
for all $i\in[5]$, the inner corners incident to $v_i$ have label $i$,
\item
around every inner vertex, the incident corners form, in clockwise order, 5 non-empty intervals $I_1,I_2,I_3,I_4,I_5$, with all corners in $I_i$ having label $i$ for $i\in[5]$. 
\item
around every inner face, in clockwise order, there are two label-jumps equal to $2$ and one label-jump equal to $1$. 
\end{itemize}
These $5$-QS structures, the additional incarnation as woods, and a related drawing algorithm are discussed in~\cite{OB-EF-SL:5QS-drawing}.  The wood incarnation is represented in Figure~\ref{fig:5c_wood}.
\end{remark}

\fig{width=\linewidth}{5c_outer}{Operations to be performed in the neighborhood of outer edges for 5-QS structures of a quasi 5-adapted triangulation. (a) Operations for orientations. (b) Operations for labelings. The configuration shown here is frozen (see Lemma~\ref{lem:rigid_qs}).}

\fig{width=\linewidth}{5c_structure_simple}{Left: A quasi 5-adapted triangulated map $G$. Middle: the primal-dual completion of $G$, endowed with a $5$-QS angular orientations (more precisely, the orientation obtained from $5$-QS angular orientations by applying the operations of Figure~\ref{fig:5c_outer}(a), and deleting the star edges). Right: the corresponding $5$-QS labeling of $G$ (more precisely, the corner labeling of $G$ obtained from the $5$-QS labeling by applying the operations of Figure~\ref{fig:5c_outer}(b), and transferring the labels from inner faces of $G'$ to inner corners of $G$).}

\subsection{Proof of necessity in Theorem~\ref{thm:main-quasi}}\label{sec:proof-necessity-quasi}
A \emph{relaxed weighted orientation} is a weighted orientation where the arc-weights are in 
$\mathbb{Z}$. As usual, the \emph{outgoing weight} (shortly, the \emph{weight}) of a vertex $v$ is the sum of the weights of the arcs going out of $v$, and the \emph{weight} of an edge is the sum of weights of its two arcs. 
For a $d$-map $G$, with $G^+$ the angular map, a \emph{relaxed $d$-GS angular orientation} of $G$ is a relaxed weighted orientation of $G^+$ with exactly the same requirements as in Definition~\ref{def:angular}, the only difference being that negative arc-weights are allowed. By the exact same arguments as in the proof of Lemma~\ref{lem:ingoing-weight-angular} (these do not use positivity of the arc-weights), we obtain:

\begin{lemma} \label{lem:ingoing-weight-quasi-angular} 
Let $G$ be a $d$-map. Let $\cA$ be a relaxed weighted orientation of the angular map $G^+$ satisfying Condition (A1) and (A2) of $d$-GS angular orientations. Let $C$ be a simple cycle of $G$. The total weight $\inweight(C)$ of the arcs strictly inside $C$ with origin on $C$ is
$$\inweight(C)=\ell(C)-d+\sum_{a\in C}(d-\deg(f_a)),$$
where $\ell(C)$ is the length of $C$, the sum is over the arcs of $C$, and for an arc $a$ of $C$ the face incident to $a$ inside $C$ is denoted by $f_a$.
\end{lemma}

\fig{width=.9\linewidth}{local_rules_transition_other}{Starting from a $d$-QS angular orientation, and applying the shown local rules at every edge-vertex, one obtains a relaxed $d$-GS angular orientation (arcs with no weight indication have weight $1$, arcs not shown on their edge have weight~$0$).}

Let $G$ be a $d$-map with no face of degree smaller than $3$, admitting a $d$-QS angular orientation $\cA'$ of $\Gcr$. Let $\cA$ be the weighted orientation of $G^+$ obtained from $\cA'$ by applying the rules of Figure~\ref{fig:local_rules_transition_other} at each (special) edge-vertex of $\Gcr$. Note that $\cA$ is a relaxed $d$-GS angular orientation of $G$, where the only arcs of negative weight are on the special (original) edges: each such edge has an arc of weight $-1$ and an arc of weight $0$. 

We now aim at proving that $G$ is quasi $d$-adapted. For a simple cycle $C$ of $G$, we call \emph{chord} an edge strictly inside $C$ joining two vertices on $C$.
Let us first consider the length of chordless cycles. Let $C$ be a chordless non-facial simple cycle of $G$. 
For a face $f$ inside $C$ incident to at least one arc of $C$, let $k$ be the number of arcs of $C$ incident to $f$, and let $s$ be the number of arcs of $G$ inside $C$ that have weight $-1$, start from $C$, and have $f$ on their right (since $C$ is chordless, such an arc ends at a vertex strictly inside $C$). 
Then, Condition (A1) and the rules of Figure~\ref{fig:local_rules_transition_other} ensure that the total contribution to $\inweight(C)$ of the edges incident to $v_f$ is at least $k(d-\deg(f))+s$ (indeed, letting $W_f$ be this contribution, $m$ be the number of vertices on $C$ adjacent to $v_f$, and $b$ be the sum of weights of arcs from $v_f$ to vertices not on $C$, we have $W_f=(m-1)(d-\deg(f))+b$, with $m\geq k+1$; and from Figure~\ref{fig:local_rules_transition_other} we see that $b\geq s$). 
This ensures that $\inweight(C)\geq\sum_{a\in C}(d-\deg(f_a))$, hence $\ell(C)\geq d$. 

Next, we consider a cycle $C$ of $G$ containing at least one vertex and having at least one chord. 
It admits a decomposition (at chords) into chordless cycles, as shown in Figure~\ref{fig:chord}. Since $C$ contains a vertex, one of these chordless components is non-facial, hence has length at least $d$. Moreover the length of $C$ is at least the length of any of its components, hence the length of $C$ is at least $d$. 

\fig{width=.6\linewidth}{chord}{Decomposition of a cycle into chordless components.}

Lastly, consider a non-facial simple cycle $C$ containing no vertex. The components of the chord decomposition of $C$ are facial cycles. Consider a pair of adjacent components, and let $C''$ be the contour of the union of these two faces. The length of $C''$ is at least $d-1$ since $G$ admits a $d$-QS angular orientation (so that every inner edge $e$ satisfies $\delta(e)\geq -1$). Hence the length of $C$ is at least $d-1$ as well, which concludes the proof that $G$ is $d$-adapted.

\subsection{Proof of existence in Theorem~\ref{thm:main-quasi}}\label{sec:proof-existence-quasi}
The proof follows the same lines as the proof of existence of Theorem~\ref{thm:main}, so we only sketch it and highlight the main differences. 
For convenience in view of later proofs, we assume that $G$ has no inner edge connecting two outer vertices. The only quasi $d$-adapted maps having such edges are easily checked to be those with no inner vertex, and either one inner edge (separating two inner faces whose degrees add up to $d+2$, this case is actually $d$-adapted) 
or two inner edges (separating a chain of three inner faces $f_1,f_2,f_3$, where $\deg(f_1)=\deg(f_3)=3$, and $\deg(f_2)=d-2$). The existence of a $d$-QS angular orientation is 
then readily checked for those cases; see Figure~\ref{fig:dQS_with_chords} for the case $d=5$ with 3 inner faces.

\fig{width=0.6\linewidth}{dQS_with_chords}{Quasi 5-adapted map with 3 inner faces and no inner vertex (left drawing), endowed with its unique 5-QS angular orientation (right drawing).}
 
For the base case $d=3$, since we disallow here inner faces of degree smaller than $3$, every inner edge in a quasi $3$-adapted map $G$ 
satisfies $\delta(e)=1$. Hence there is no special edge, so being quasi 3-adapted is the same as being 3-adapted (here, being a simple triangulation), and the existence of 3-GS angular orientations is already established. The same remark actually holds for $d=4$ (assuming no inner face of degree less than $3$, the first value of $d$ for which there are quasi $d$-adapted maps that are not $d$-adapted is $d=5$).

For $d\geq 3$, let $G$ be a quasi $(d+1)$-adapted map with no inner face of degree less than $3$, and no inner edge connecting two outer vertices. We construct $\overline{G}_{\bullet}$ exactly as in the beginning of Section~\ref{sec:induction-step} (insertion of a copy of $X_d$ in each $(d+1)$-face, including the outer one, and re-rooting at a $d$-face in the outer copy of $X_d$). 
There is no non-facial cycle of length less or equal to $d$ in $G$, and the insertions of the copies of $X_d$ do not create non-facial cycles of length less or equal to $d$ either (by the same arguments as in Claim~\ref{claim:still-adapted}). Hence $\overline{G}_{\bullet}$ is $d$-adapted, so it admits a $d$-GS angular orientation $\overline{\cA}_{\bullet}$. Let $G_{\bullet}$ be the $(d+1)$-map obtained from $\overline{G}_{\bullet}$ by deleting the outer copy of $X_d$. In the orientation induced by $\overline{\cA}_{\bullet}$ on $G_{\bullet}^+$, as before the star edges incident to star vertices of degree $d$ have weight $0$, but now we also have inner original edges of weight $0$, which are precisely the special edges. 
Let $\widehat{G}_{\bullet}$ be the map obtained from $G_{\bullet}^+$ by deleting the star vertices of degree $d$ and their incident edges, and deleting 
the special (original) edges. Let $\cB_{\bullet}$ be the weighted orientation induced by $\overline{\cA}_{\bullet}$ on $\widehat{G}_{\bullet}$. 

The same arguments as in Claim~\ref{claim:co-accessible} ensure that $\cB_\bullet$ is co-accessible (indeed, the crucial property in that proof is that there is no $d$-cycle with at least one vertex inside). We can thus consider a spanning tree $T$ of $\widehat{G}_{\bullet}$ 
such that it contains all the outer edges of $G_{\bullet}$ except $(v_1,v_2)$, and for every inner edge of $T$, the arc traversed in the direction away from the outer vertices has positive weight. 
Then we use the transfer process of Section~\ref{sec:induction-step} (described after Claim~\ref{claim:co-accessible}) 
to obtain a weighted orientation $\widehat{\cA}_{\bullet}^+$ on $\widehat{G}_{\bullet}^+$ 
(the process can be formulated for any co-accessible orientation endowed with a compatible spanning tree). 
By the same arguments as in Section~\ref{sec:induction-step}, in $\widehat{\cA}_\bullet$ the weight of every inner vertex of $\widehat{G}_\bullet$ has increased by $1$ compared to $\cB_\bullet$, and the weight of every inner edge of $\widehat{G}_\bullet$ has decreased by $1$ compared to $\cB_\bullet$, while all the vertices and edges of $\widehat{\cA}_{\bullet}^+$ which were not in $\widehat{G}_{\bullet}$ have weight $1$. 
 
\fig{width=\linewidth}{face_three_types}{(a) The 3 types of inner faces in the map $\widehat{G}_\bullet$ (the second type, arising from special edges of $G$, does not appear when $G$ is $(d+1)$-adapted). (b) Contracting the small edges incident to a star vertex of $G_{\bullet}^+$.}

A notable difference with Section~\ref{sec:induction-step} is that there are now $3$ types of inner faces in $\widehat{G}_{\bullet}$, as shown in Figure~\ref{fig:face_three_types}(a). 
We call \emph{small triangles} the faces of the first type, that is, the inner faces of $\widehat{G}_{\bullet}$ made of two vertices of $G_{\bullet}$
and a star vertex of $G_{\bullet}^+$ of degree less than $d$. We call \emph{small edges} and \emph{small vertices} of $\widehat{G}_{\bullet}^+$ the vertices and 
edges that have been added inside the small triangle. We call \emph{edge-vertices} and \emph{extra edges} of $\widehat{G}_{\bullet}^+$ the vertices and edges that have been added inside the (quadrangular) faces of the second type in Figure~\ref{fig:face_three_types}(a). 

We then apply to $\widehat{G}_{\bullet}^+$ the contraction process indicated in Figure~\ref{fig:face_three_types}(b): contracting all the small edges incident to star vertices of $G_{\bullet}^+$. We obtain a map $\widetilde{G}_\bullet^\times$ closely related to $G_\bullet^\times$: it is $G_\bullet^\times$ except that for every star edge $e$ incident to a star vertex of degree less than $d$ in $\Gcr$ one has $t\in\{0,1,2\}$ additional small edges with the same endpoints as $e$. Precisely, we have $t=t_{\ell}+t_r$, where $t_{\ell}\in\{0,1\}$ (resp. $t_{r}\in\{0,1\}$) indicates whether there is an edge $e_\ell$ (resp. $e_r$) on the left (resp. right) side of $e$ resulting from the contraction of a small edge. Note that $t=2-s(e)$, with $s(e)$ the number of special edges among the two original edges that delimit the inner corner of $G_{\bullet}$ associated to $e$. 

We then let $\cA_\bullet'$ be the weighted orientation of $G_\bullet^\times$ obtained from $\widehat{\cA}_{\bullet}$ by this contraction process. That is to say, for every star edge $e$ of $G_\bullet^\times$ incident to a star vertex $v_f$ of degree less than $d$, the weights of the arcs of $e$ are taken as the sum of weights of the corresponding arcs in $e,e_{\ell},e_r$ (whenever $e_\ell,e_r$ exist). 
From the preceding, the weight of such an edge $e$ is $d+1-\mathrm{deg}(f)-s(e)$. This formula also holds for an edge $e$ that is a star edge of $G_\bullet^\times$ 
incident to a star vertex $v_f$ of $G_\bullet^\times$ of degree $d$. Indeed,
the weight in $\cA_{\bullet}'$ of such an edge is $1$, and it has $s(e)=0$ (this is ensured by the fact that there is no inner face of degree less than $3$, so that the $(d+1)$-map $G_\bullet$ has no special edge incident to a $d$-face). Since the weights of inner vertices and inner edges of $G_{\bullet}$ have not changed compared to $\cB_\bullet$, we conclude that the orientation $\cA_\bullet'$ (on $G_\bullet^\times$) satisfies all conditions of Definition~\ref{def:quasiAngular}, except possibly Condition (A0').



A \emph{border} star vertex of $G_\bullet^\times$ (or $G_\bullet^+$) is a star vertex in a face of $G_\bullet$ that is incident to at least one outer edge of $G_\bullet$. 
A \emph{border} edge-vertex of $G_\bullet^\times$ is an edge-vertex that is adjacent to an outer vertex and to a border star vertex. 
To show that $\cA_{\bullet}'$ can be modified to satisfy Condition~(A0'), 
 we prove the following:

\begin{lemma}\label{lem:conditions_A_prime_quasi}
In the orientation $\cA_\bullet'$, any arc starting from an outer vertex has weight $0$, except possibly if it ends at a border star vertex or at a border edge-vertex. 
Moreover, any arc starting from a border star vertex has weight $0$, except possibly if it ends at an outer vertex or at a border edge-vertex. 
Finally, for every border star vertex $v_f$, the outer vertices adjacent to $v_f$ are consecutive along the outer face.  
\end{lemma}
\begin{proof}
We let $\cR_\bullet$ be the weighted orientation of $\Gb^+$ obtained from $\cA_\bullet'$ 
by applying the operations of Figure~\ref{fig:local_rules_transition_other} at every star vertex. As in Section~\ref{sec:proof-necessity-quasi}, 
$\cR_\bullet$ is a relaxed orientation that satisfies Conditions (A1) and (A2) of Definition~\ref{def:angular}. Thus, letting $C$ be the outer cycle of $\Gb$, Lemma~\ref{lem:ingoing-weight-quasi-angular} applies: in $\cR_{\bullet}$ we have $\inweight(C)=\sum_{i=1}^{d+1}(d+1-\deg(f_i))$, where $f_i$ is the inner face incident to the outer edge $(v_{i-1},v_i)$. 
Let $S$ be the total weight (in $\cR_{\bullet}$) of arcs from border star vertices to inner vertices. 
Since the arcs on star edges have non-negative weight in $\cR_\bullet$, and since Conditions (A1) and (A2) hold, the contribution to $\inweight(C)$ of the arcs from outer vertices to border star vertices is at least $S+\sum_{i=1}^{d+1}(d+1-\deg(f_i))$, with equality if and only if for every border star vertex $v_f$, the outer vertices adjacent to $v_f$ are consecutive along the outer face. 
Hence, the total weight (in $\cR_\bullet$) of the arcs that start at an outer vertex and do not end at a border star vertex is at most $-S$. 
Hence, if we let $W$ be the total weight (in $\cR_\bullet$) of arcs that either start at an outer vertex and do not end at a border star vertex, or 
start at a border star vertex and end at an inner original vertex, then we have $W\leq 0$. 

We now look at the contributions to $W$ 
 in the intermediate step (with green edges) in Figure~\ref{fig:local_rules_transition_other}. Recall that we assume $G$ has no inner edge connecting two outer vertices,  and the same  holds for $\Gb$ (as the insertions of copies of $X_d$ can not create such edges), hence every edge-vertex $v_e$ has at most one outer neighbor.  
Then it is not difficult to see that every edge-vertex $v_e$ adjacent to an outer vertex or to a border star vertex yields a non-negative contribution to $W$
(contribution given by the special edge of weight $-1$ and the 4 green edges associated to $v_e$), 
and that the contribution is strictly positive whenever $v_e$ does not end at an outer vertex or at a border star vertex. 
On the other hand, for every arc $a$ in $\cA_\bullet'$ starting at an outer vertex, 
on a non-special original edge or on a star edge, and not ending at a border star vertex, the contribution of $a$ to $W$ is given by its weight in $\cA_{\bullet}'$. 
Similarly, for every arc $a$ in $\cA_\bullet'$ starting at a border star vertex and ending at an original inner vertex, the contribution of $a$ to $W$ is given by its weight in $\cA_{\bullet}'$. 
Since $W\leq 0$, and since all possible contributions are non-negative (grouping those associated to an edge-vertex as explained above), we conclude that the positive ones do not exist. 

Finally, the above inequality (for $S+\sum_{i=1}^{d+1}(d+1-\deg(f_i))$) has to be tight, so that the last statement holds.
\end{proof}

To turn $\cA_\bullet'$ into a $(d+1)$-QS angular orientation, we modify some arc weights in the neighborhood of the outer face, keeping the weights of edges and of inner vertices unchanged, as follows. 
For each edge $(v_i,v_{f_i})$, with $f_i$ the inner face of $G_\bullet$ incident to $(v_{i-1},v_i)$,  
we put all the weight on the arc out of $v_i$; and if $f_{i+1}\neq f_i$, we put all the weight of the edge $(v_i,v_{f_{i+1}})$ on the arc ending at $v_i$. 
For each border edge-vertex $v_e$ we choose the unique outgoing edge of $v_e$ as the one leading to the adjacent outer vertex $v$, unless
the next neighbor after $v$ in counterclockwise order around $v_e$ is a border star vertex $v_f$, in which case the unique outgoing edge of $v_e$ is the one leading to $v_f$.  
With these modifications, and given Lemma~\ref{lem:conditions_A_prime_quasi}, one easily checks that all conditions of Definition~\ref{def:quasiAngular} are now satisfied.

Finally, by the exact same arguments as those at the end of Section~\ref{sec:induction-step}, the $(d+1)$-GS angular orientation $\cA_\bullet'$ on $G_\bullet^\times$ yields a $(d+1)$-GS angular orientation $\cA'$ on $\Gcr$; see Figure~\ref{fig:existence-proof-construct}(b). Note that the special edges do not interfere with this last step,  
since they are not incident to inner faces of $G_\bullet$ of degree $d$, hence there is no special edge of $G_\bullet$ on a boundary of a copy of $X_d$, 
and there is no special edge inside a copy of $X_d$ either.





\section{Concluding remarks}\label{sec:conclusion}
As shown in Sections \ref{sec:classical} and \ref{sec:bip-classical} many of the combinatorial structures on plane graphs that have been used in the literature to define  graph drawing algorithms are grand-Schnyder structures for special classes of plane graphs. 
Hence, grand Schnyder woods offer a framework for extending known drawing algorithms to new classes of plane graphs, or to look for brand new algorithms in the spirit of the known ones. 
Following this perspective, in~\cite{OB-EF-SL:4-GS-drawing} we present extensions of several graph drawing algorithms (straight-line drawing and orthogonal drawing) related to $4$-GS structures \cite{Barriere-Huemer:4-Labelings-quadrangulation,OB-EF:Schnyder,Fu07b,He93:reg-edge-labeling}, and establish some connections between them. 
And in~\cite{OB-EF-SL:5QS-drawing} we use $5$-QS structures on quasi 5-adapted triangulations of the pentagon (closely related to 5-connected triangulations)  to define a face-counting  straight-line drawing algorithm for 5-connected triangulations. 
In this context, the algorithm established in the present article to compute grand-Schnyder structures in linear time is valuable in order to obtain graph drawing algorithms of linear time complexity.

Several instances of grand-Schnyder structures have also been used to define bijections between classes of planar maps and classes of trees~\cite{albenque2013generic,Bernardi-Fusy:dangulations,Fu07b,FuPoScL,Poulalhon:triang-3connexe+boundary,Schaeffer:these}. Thus the general framework established in the present article offers the promise of extending these bijections (using the ``master-bijection'' approach developed in \cite{Bernardi-Fusy:dangulations,OB-EF:girth}). In particular, this approach seems well suited to tackle the $d$-irreducible maps enumerated in~\cite{bouttier2014irreducible}.

Lastly, the original impetus for the definition of Schnyder woods were results about the dimension of the incidence poset of planar graphs~\cite{Schnyder:wood1}. We wonder if some generalization of these results can be deduced from the existence of grand-Schnyder structures.
 


\bigskip

\noindent{\bf Acknowledgments.} Olivier Bernardi was partially supported by NSF Grant DMS-2154242. \'Eric Fusy was partially supported by the project ANR19-CE48-011-01 (COMBIN\'E), and the project ANR-20-CE48-0018 (3DMaps). 

\noindent{\bf Confict of Interest Statement.} On behalf of all authors, the corresponding author states that there is no conflict of interest.

\bibliographystyle{plain}
\bibliography{biblio-Schnyder}

\begin{thebibliography}{10}

\bibitem{louigi2017scaling}
L.~Addario-Berry and M.~Albenque.
\newblock The scaling limit of random simple triangulations and random simple
  quadrangulations.
\newblock {\em Annals of Probability}, 45(5):2767--2825, 2017.

\bibitem{albenque2013generic}
M.~Albenque and D.~Poulalhon.
\newblock Generic method for bijections between blossoming trees and planar
  maps.
\newblock {\em Electronic Journal of Combinatorics}, 22(2):P2.38, 2015.

\bibitem{Barriere-Huemer:4-Labelings-quadrangulation}
L.~Barri\'ere and C.~Huemer.
\newblock 4-labelings and grid embeddings of plane quadrangulations.
\newblock {\em Discrete Mathematics}, 312(10):1722--1731, 2012.

\bibitem{Bernardi-Fusy:dangulations}
O.~Bernardi and \'E. Fusy.
\newblock A bijection for triangulations, quadrangulations, pentagulations,
  etc.
\newblock {\em Journal of Combinatorial Theory, Series A}, 119(1):218--244,
  2012.

\bibitem{OB-EF:Schnyder}
O.~Bernardi and {\'E}.~Fusy.
\newblock Schnyder decompositions for regular plane graphs and application to
  drawing.
\newblock {\em Algorithmica}, 62(3):1159--1197, 2012.

\bibitem{OB-EF:girth}
O.~Bernardi and {\'E}.~Fusy.
\newblock Unified bijections for maps with prescribed degrees and girth.
\newblock {\em Journal of Combinatorial Theory, Series A}, 119:1351--1387,
  2012.

\bibitem{OB-EF-SL:4-GS-drawing}
O.~Bernardi, \'E. Fusy, and S.~Liang.
\newblock A census of graph-drawing algorithms based on generalized transversal
  structures.
\newblock Preprint arXiv:2403.18980.

\bibitem{OB-EF-SL:5QS-drawing}
O.~Bernardi, \'E. Fusy, and S.~Liang.
\newblock A {S}chnyder-type drawing algorithm for 5-connected triangulations.
\newblock In {\em 31st International Symposium on Graph Drawing and Network
  Visualization (GD 2023)}, Lecture Notes in Computer Science, 2023.

\bibitem{biedl2018embedding}
T.~Biedl, G.~Liotta, and F.~Montecchiani.
\newblock Embedding-preserving rectangle visibility representations of
  nonplanar graphs.
\newblock {\em Discrete \& Computational Geometry}, 60:345--380, 2018.

\bibitem{bonichon2006planar}
N.~Bonichon, C.~Gavoille, N.~Hanusse, D.~Poulalhon, and G.~Schaeffer.
\newblock Planar graphs, via well-orderly maps and trees.
\newblock {\em Graphs and Combinatorics}, 22:185--202, 2006.

\bibitem{bouttier2014irreducible}
J.~Bouttier and E.~Guitter.
\newblock On irreducible maps and slices.
\newblock {\em Combinatorics, Probability and Computing}, 23(6):914--972, 2014.

\bibitem{Brehm:latticeSchnyder}
E.~Brehm.
\newblock 3-orientations and {S}chnyder 3-tree-decompositions.
\newblock Master's thesis, Freie Universit\"at Berlin, 2000.

\bibitem{aleardi2018array}
L.~Castelli~Aleardi and O.~Devillers.
\newblock Array-based compact data structures for triangulations: Practical
  solutions with theoretical guarantees.
\newblock {\em Journal of Computational Geometry}, 9(1):247--289, 2018.

\bibitem{de1995bipolar}
H.~De~Fraysseix, P.~Ossona~de Mendez, and P.~Rosenstiehl.
\newblock Bipolar orientations revisited.
\newblock {\em Discrete Applied Mathematics}, 56(2-3):157--179, 1995.

\bibitem{Mendez:these}
P.~Ossona de~Mendez.
\newblock {\em Orientations bipolaires}.
\newblock PhD thesis, \'Ecole des Hautes \'Etudes en Sciences Sociales, Paris,
  1994.

\bibitem{dhandapani2010greedy}
R.~Dhandapani.
\newblock Greedy drawings of triangulations.
\newblock {\em Discrete \& Computational Geometry}, 43:375--392, 2010.

\bibitem{di1999output}
G.~Di~Battista, R.~Tamassia, and L.~Vismara.
\newblock Output-sensitive reporting of disjoint paths.
\newblock {\em Algorithmica}, 23(4):302--340, 1999.

\bibitem{eppstein2012area}
D.~Eppstein, E.~Mumford, B.~Speckmann, and K.~Verbeek.
\newblock Area-universal and constrained rectangular layouts.
\newblock {\em SIAM Journal on Computing}, 41(3):537--564, 2012.

\bibitem{F01}
S.~Felsner.
\newblock Convex drawings of planar graphs and the order dimension of
  3-polytopes.
\newblock {\em Order}, 18:19--37, 2001.

\bibitem{Felsner:geodesic-embedbings}
S.~Felsner.
\newblock Geodesic embeddings and planar graphs.
\newblock {\em Order}, 20:135--150, 2003.

\bibitem{Felsner:lattice}
S.~Felsner.
\newblock Lattice structures from planar graphs.
\newblock {\em Electronic Journal of Combinatorics}, 11(1), 2004.

\bibitem{felsner2013rectangle}
S.~Felsner.
\newblock Rectangle and square representations of planar graphs.
\newblock {\em Thirty essays on geometric graph theory}, pages 213--248, 2013.

\bibitem{felsnerFewLines}
S.~Felsner.
\newblock 4-connected triangulations on few lines.
\newblock In {\em Graph Drawing and Network Visualization}, pages 395--408,
  2019.

\bibitem{heldt2017mixing}
S.~Felsner and D.~Heldt.
\newblock Mixing {T}imes of {M}arkov {C}hains on {D}egree {C}onstrained
  {O}rientations of {P}lanar {G}raphs.
\newblock {\em Discrete Mathematics \& Theoretical Computer Science}, 18, 2017.

\bibitem{FeHuKa}
S.~Felsner, C.~Huemer, S.~Kappes, and D.~Orden.
\newblock {Binary Labelings for Plane Quadrangulations and their Relatives}.
\newblock {\em {Discrete Mathematics \& Theoretical Computer Science}}, {Vol.
  12 no. 3}, January 2011.

\bibitem{felsner2009uld}
S.~Felsner and K.~Knauer.
\newblock {ULD}-lattices and {$\Delta$}-bonds.
\newblock {\em Combinatorics, Probability and Computing}, 18(5):707--724, 2009.

\bibitem{felsner2018equiangular}
S.~Felsner, H.~Schrezenmaier, and R.~Steiner.
\newblock Equiangular polygon contact representations.
\newblock In {\em 44th International Workshop on Graph-Theoretic Concepts in
  Computer Science, WG'2018}, pages 203--215. Springer, 2018.
\newblock Extended version at
  \url{https://page.math.tu-berlin.de/~felsner/Paper/kgons.pdf}.

\bibitem{Felsner:posets}
S.~Felsner and W.T. Trotter.
\newblock Posets and planar graphs.
\newblock {\em Journal of Graph Theory}, 49:262--272, 2000.

\bibitem{felsner2008schnyder}
S.~Felsner and F.~Zickfeld.
\newblock Schnyder woods and orthogonal surfaces.
\newblock {\em Discrete \& Computational Geometry}, 40(1):103--126, 2008.

\bibitem{Fu07b}
\'E. Fusy.
\newblock Transversal structures on triangulations: {A} combinatorial study and
  straight-line drawings.
\newblock {\em Discrete Math.}, 309:1870--1894, 2009.

\bibitem{enumerationFelsnerColorings}
{\'{E}}.~Fusy, E.~Narmanli, and G.~Schaeffer.
\newblock Enumeration of {C}orner {P}olyhedra and 3-{C}onnected {S}chnyder
  {L}abelings.
\newblock {\em Electronic Journal of Combinatorics}, 30(2), 2023.

\bibitem{FuPoScL}
\'E. Fusy, D.~Poulalhon, and G.~Schaeffer.
\newblock Dissections, orientations, and trees, with applications to optimal
  mesh encoding and to random sampling.
\newblock {\em Transactions on Algorithms}, 4(2):Art. 19, April 2008.

\bibitem{gonccalves2012triangle}
D.~Gon{\c{c}}alves, B.~L{\'e}v{\^e}que, and A.~Pinlou.
\newblock Triangle contact representations and duality.
\newblock {\em Discrete \& Computational Geometry}, 48:239--254, 2012.

\bibitem{He93:reg-edge-labeling}
X.~He.
\newblock On finding the rectangular duals of planar triangulated graphs.
\newblock {\em SIAM Journal on Computing}, 22:1218--1226, 1993.

\bibitem{KantHe97:reg-edge-labeling-linear}
G.~Kant and X.~He.
\newblock Regular edge labeling of 4-connected plane graphs and its
  applications in graph drawing problems.
\newblock {\em Theoretical Computer Science}, 172:175--193, 1997.

\bibitem{khuller1993lattice}
S.~Khuller, J.~Naor, and P.~Klein.
\newblock The lattice structure of flow in planar graphs.
\newblock {\em SIAM Journal on Discrete Mathematics}, 6(3):477--490, 1993.

\bibitem{li2017schnyder}
Y.~Li, X.~Sun, and S.~Watson.
\newblock Schnyder woods, {SLE}$_{(16)}$, and {L}iouville quantum gravity.
\newblock {\em arXiv preprint arXiv:1705.03573}, 2017.

\bibitem{Miller:FelsnerWoods}
Ezra Miller.
\newblock Planar graphs as minimal resolutions of trivariate monomial ideals.
\newblock {\em Documenta Mathematica 7 (2002), 43-90}, 7:43--90, 2002.

\bibitem{miracle2016sampling}
S.~Miracle, D.~Randall, A.P. Streib, and P.~Tetali.
\newblock Sampling and counting 3-orientations of planar triangulations.
\newblock {\em SIAM Journal on Discrete Mathematics}, 30(2):801--831, 2016.

\bibitem{mullin1968enumeration}
R.C. Mullin and P.J. Schellenberg.
\newblock The enumeration of c-nets via quadrangulations.
\newblock {\em Journal of Combinatorial Theory}, 4(3):259--276, 1968.

\bibitem{Poulalhon:triang-3connexe+boundary}
D.~Poulalhon and G.~Schaeffer.
\newblock Optimal coding and sampling of triangulations.
\newblock {\em Algorithmica}, 46(3-4):505--527, 2006.

\bibitem{propp1996exact}
J.G. Propp and D.B. Wilson.
\newblock Exact sampling with coupled {M}arkov chains and applications to
  statistical mechanics.
\newblock {\em Random Structures \& Algorithms}, 9(1-2):223--252, 1996.

\bibitem{Schaeffer:these}
G.~Schaeffer.
\newblock {\em Conjugaison d'arbres et cartes combinatoires al{\'e}atoires}.
\newblock PhD thesis, Universit{\'e} Bordeaux I, 1998.

\bibitem{Schnyder:wood1}
W.~Schnyder.
\newblock Planar graphs and poset dimension.
\newblock {\em Order}, 5(4):323--343, 1989.

\bibitem{Schnyder:wood2}
W.~Schnyder.
\newblock Embedding planar graphs in the grid.
\newblock {\em Symposium on Discrete Algorithms (SODA)}, pages 138--148, 1990.

\bibitem{tutte1962census}
W.T. Tutte.
\newblock A census of planar triangulations.
\newblock {\em Canadian Journal of Mathematics}, 14:21--38, 1962.

\bibitem{Wagner:straght-line-drawing}
K.~Wagner.
\newblock Bemerkungen zum vierfarbenproblem.
\newblock {\em Jahresber. Deutsch Math-Verein.}, 46:26--32, 1936.

\end{thebibliography}

\end{document}